   \edef\Gin@extensions{\Gin@extensions,.mps}
\newtheorem{theorem}{Theorem}[section]
\newtheorem{lemma}[theorem]{Lemma}
\newtheorem{proposition}[theorem]{Proposition}
\theoremstyle{definition}
\newtheorem{definition}[theorem]{Definition}
\newtheorem{notation}[theorem]{Notation}
\theoremstyle{remark}
\newtheorem{remark}[theorem]{Remark}
\definecolor{DarkBlue}{rgb}{0,0.1,0.55}
\numberwithin{equation}{section}
\newcommand {\hide}[1]{}
\newcommand {\junk}[1]{}
\newcommand {\la}   {{\langle}}
\newcommand {\ra}   {{\rangle}}
\newcommand {\dist} {{\rm dist}}
\newcommand {\Cr} {{\rm Cr}}
\newcommand {\Def} {{\rm Def}}
\def\addots{\mathinner{\mkern1mu
\raise1pt\vbox{\kern7pt\hbox{.}}
\mkern2mu\raise4pt\hbox{.}\mkern2mu
\raise7pt\hbox{.}\mkern1mu}}
\newcommand{\blist}
    {\begin{list}
    {\textbf{\arabic{enumi}.}}
    {\setlength{\leftmargin}{0.365 in}
    \setlength{\rightmargin}{0.365 in}
    \usecounter{enumi}}}
\newcommand{\alist}
    {\begin{list}
    {\textbf{(\alph{enumii})}}
    {\setlength{\leftmargin}{0.365 in}
    \setlength{\rightmargin}{0.365 in}
    \usecounter{enumii}}}
\newcommand{\rlist}
    {\begin{list}
    {\textbf{(\roman{enumiii})}}
    {\setlength{\leftmargin}{0.365 in}
    \setlength{\rightmargin}{0.365 in}
    \usecounter{enumiii}}}
\newcommand{\Rlist}
    {\begin{list}
    {\textbf{(\Roman{enumii})}}
    {\setlength{\leftmargin}{0.365 in}
    \setlength{\rightmargin}{0.365 in}
    \usecounter{enumii}}}
\newcommand{\elist}{\end{list}}
\newcommand{\be}{\begin{equation}}
\newcommand{\ee}{\end{equation}}
\title{Hausdorff approximations and volume of tubes of singular algebraic sets}
\author{Saugata Basu, Antonio Lerario}
\thanks{Basu was supported in part by the NSF grant CCF-1910441.}
\begin{document}

\maketitle
\begin{abstract}We prove bounds for the volume of neighborhoods of algebraic sets, in the euclidean space or the sphere, in terms of the degree of the defining polynomials, the number of variables and the dimension of the algebraic set, without any smoothness assumption. This generalizes previous work of Lotz \cite{Lotz} on smooth complete intersections in the euclidean space and of B\"urgisser,  Cucker  and Lotz \cite{BCL} on hypersurfaces in the sphere, and gives a complete solution to \cite[Problem 17]{BuCu}.
\end{abstract}

\section{Introduction}
In this paper we deal with the following basic problem: given a real algebraic set $Z$ of dimension $m$, defined in $\mathbb{R}^n$ or in $S^n$ by polynomials of degree bounded by $\delta$, estimate the volume of the set of points in the ambient space which are inside a ball of radius $\sigma>0$ and are at distance at most $\varepsilon>0$ from $Z$. 

The study of the volume of neighborhoods of algebraic sets has a long history, see for instance \cite{BCSS,BCL, Demmel, Edelman, Gray1, Gray2, Loeser, Lotz,  Wongkew}, and it has fundamental algorithmic implications, e.g. for estimating the size of ill--conditioned inputs in numerical analysis (see the monograph \cite{BuCu}). In fact, one of our main motivations for this study is to give a positive answer to \cite[Problem 17]{BuCu}, see Section \ref{sec:ill} below.

The problem stated above is studied  in \cite{Lotz} in the case $Z$ is a smooth complete intersection in $\mathbb{R}^n$, and in \cite{BCL} in the case $Z$ is a hypersurface (possibly singular) in the sphere $S^n$. Here we will prove bounds with no smoothness assumption and no restriction on the dimension of $Z$.  More precisely, our first result is the following theorem, which deals with the case the ambient space is $\mathbb{R}^n$, and generalizes \cite[Theorem 1.1]{Lotz} to the singular case (see Theorem \ref{thm:affine} below for a more detailed statement). In this context it is natural to state the result in probabilistic terms.

\begin{theorem}\label{thm:affineintro}Let $\mathcal{
F}  \subset \mathbb{R}[X_1,\ldots,X_n]$ be a finite set of polynomials with degrees bounded by $\delta$ and $Z\subset\mathbb{R}^n$ be their common zero set. Assume $\dim_\mathbb{R} (Z)\leq m$.  Given $p\in \mathbb{R}^n$ and $\sigma>0$ let $x\in B(p, \sigma)$ be a uniformly distributed point\footnote{Here we turn the ball $B(p, \sigma)$ into a probability space using the Lebesgue measure normalized by the volume of the ball itself.}. Then, for every $\varepsilon>0$
\be\label{eq:affine2intro}\mathbb{P}\left(\mathrm{dist}(x, Z)\leq \varepsilon\right)\leq 4 \left(\frac{4n\delta\varepsilon}{\sigma}\right)^{n-m}\left(1+\frac{(4\delta+1)\varepsilon}{\sigma}\right)^m,\ee
and, if  $\varepsilon\leq \frac{\sigma}{(4\delta+1)m}$, 
\be\label{eq:affine3intro} \mathbb{P}\left(\mathrm{dist}(x, Z)\leq \varepsilon\right)\leq 4e \left(\frac{4n\delta\varepsilon}{\sigma}\right)^{n-m}.\ee
\end{theorem}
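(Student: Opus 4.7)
The plan is to reduce the singular case to the smooth complete intersection case handled in \cite{Lotz}, via a Hausdorff-type approximation consistent with the paper's title. By translating the center of the ball to the origin and rescaling by $\sigma$, one may assume $p=0$ and $\sigma=1$, so that the probability becomes $\mathrm{vol}(T_\varepsilon Z\cap B(0,1))/\mathrm{vol}(B(0,1))$, where $T_\varepsilon Z=\{x:\mathrm{dist}(x,Z)\leq \varepsilon\}$. This rescaling affects only the coefficients of the defining polynomials, not their degree $\delta$.

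The main step---and the main obstacle---is to construct a smooth complete intersection $\widetilde{Z}\subset\mathbb{R}^n$ of dimension exactly $m$, cut out by $n-m$ polynomials of degree bounded by $\delta$ (or a controlled multiple of $\delta$), such that every point of $Z\cap B(0,1+\varepsilon)$ lies within Euclidean distance $\varepsilon$ from $\widetilde{Z}$. To achieve this I would take $n-m$ generic linear combinations of the elements of $\mathcal{F}$---their common zero set $Y$ contains $Z$ and, by genericity, has dimension $m$ whenever the ideal of $\mathcal{F}$ has codimension at least $n-m$---and perturb them by small generic polynomials of degree $\leq \delta$. Sard's theorem guarantees that the common zero set of the perturbed system is a smooth complete intersection for generic choices, while the one-sided Hausdorff inclusion $Z\cap B(0,1+\varepsilon)\subseteq T_\varepsilon \widetilde{Z}$ holds provided the size of the perturbation is small enough. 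The delicate point is the quantitative control on the smallness required: one needs a bound depending only on $\delta$, $n$, $m$ and $\varepsilon$, and uniform in $Z$. This quantitative Sard-type statement for algebraic sets is the novel technical contribution of the paper and the principal obstacle to the proof.

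Once $\widetilde{Z}$ is in hand, the one-sided Hausdorff bound yields $T_\varepsilon Z\cap B(0,1)\subseteq T_{2\varepsilon}\widetilde{Z}\cap B(0,1)$, reducing the problem to bounding the tube volume of a smooth complete intersection. Applying Lotz's \cite[Theorem 1.1]{Lotz} to $\widetilde{Z}$ with tube radius $2\varepsilon$ produces an estimate of the form $(4n\delta\varepsilon)^{n-m}(1+O(\delta\varepsilon))^m\,\mathrm{vol}(B(0,1))$; tracking constants (including the factor $2$ from enlarging the tube and the factor coming from the inflated degree of $\widetilde{Z}$) yields \eqref{eq:affine2intro}. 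The second inequality \eqref{eq:affine3intro} is an immediate corollary of \eqref{eq:affine2intro}: under the hypothesis $\varepsilon\leq \sigma/((4\delta+1)m)$, the elementary bound $(1+x/m)^m\leq e^x$ applied with $x=(4\delta+1)m\varepsilon/\sigma\leq 1$ gives $(1+(4\delta+1)\varepsilon/\sigma)^m\leq e$, which reduces the right-hand side of \eqref{eq:affine2intro} to $4e(4n\delta\varepsilon/\sigma)^{n-m}$.
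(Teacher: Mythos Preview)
Your high--level strategy---approximate $Z$ by a smooth complete intersection of the \emph{same} dimension and controlled degree, then apply Lotz---is exactly the paper's strategy, and your derivation of \eqref{eq:affine3intro} from \eqref{eq:affine2intro} is correct and matches the paper. But your proposed construction of the approximating variety $\widetilde{Z}$ has a genuine gap, and it is not the gap you identify.

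Taking $n-m$ generic linear combinations of the elements of $\mathcal{F}$ and perturbing does \emph{not} in general produce a complete intersection of dimension $m$. The hypothesis is only $\dim_{\mathbb{R}} Z\leq m$, which over $\mathbb{R}$ does not force the ideal to have codimension $\geq n-m$: already $Z(x_1^2+x_2^2,\mathbb{R}^2)=\{0\}$ has real dimension $0$, yet any single polynomial in the family (or a perturbation of it) cuts out a curve, and no choice of two linear combinations of the single generator is available. More generally, if $\#\mathcal{F}<n-m$ your construction is vacuous, and even when $\#\mathcal{F}\geq n-m$ a small perturbation of $n-m$ generic combinations will typically yield a smooth complete intersection whose real dimension is strictly larger than $m$ (e.g.\ perturbing $\sum P_i^2$ gives a hypersurface). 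The paper singles out exactly this pitfall in the introduction.

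The paper's fix is to use \emph{polar varieties}: set $Q=\sum_{P\in\mathcal{F}}P^2$ (degree $\leq 2\delta$), choose a suitable nonnegative $G$ of degree $2\delta$, form $D=(1-t)Q-tG$, and take the family
\[
V_t \;=\; Z\Bigl(\,D,\ \tfrac{\partial D}{\partial X_1},\ldots,\tfrac{\partial D}{\partial X_{n-m-1}}\,\Bigr)\subset\mathbb{R}^n .
\]
For generic coordinates and small $t>0$ this is a smooth complete intersection of dimension $m$, cut out by $n-m$ polynomials of degree $\leq 2\delta$, and (via results of Barone--Basu together with a semialgebraic Hausdorff--limit lemma) $V_t\cap B(0,R)\to Z\cap B(0,R)$ in the Hausdorff metric for every $R$. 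This is where the factor $2$ in the degree---hence the $4\delta$ in \eqref{eq:affine2intro}---comes from.

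Finally, you misdiagnose the ``principal obstacle'' as a quantitative Sard bound controlling how small the perturbation must be. The paper sidesteps this entirely: one fixes $\tau>0$, uses the Hausdorff convergence to get $\mathcal{U}(Z,\varepsilon)\cap B(p,\sigma)\subseteq \mathcal{U}(V_t,\varepsilon+\tau)\cap B(p,\sigma)$ for all small $t$, applies Lotz to $V_t$ with radius $\varepsilon+\tau$ and degree $2\delta$, and then lets $\tau\to 0$. No uniform quantitative smallness is needed; the real obstacle is producing an approximation of the correct dimension, which is what the polar--variety construction achieves.
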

As one can see from \eqref{eq:affine3intro}, the codimension $c\geq n-m$ of the algebraic set $Z$ plays a key role in these estimates: it is crucial (especially for algorithms) that the dependence of the bound, for fixed $c$, is \emph{polynomial} in $n$ (the reader should think that $c$ is fixed, $n$ and $\delta$ are large, $\varepsilon>0$ is small and $\sigma>0$ is of order $O(1)$). 

The fact that a quantitative estimate of this type, as a function of the various ingredients, is possible at all follows from Weyl's Tube Formula \cite{Weyl}, which is the main tool used in the smooth case. It is intuitively clear that, as $\varepsilon\to0,$ the desired bound should be of the order $O(\varepsilon^{n-m})$, but an inductive limit argument using \cite{Weyl} on the singular points stratification of $Z$ is delicate, and the bounds depend on the complexity of the stratification.

Instead here we propose a different approach: to approximate the set $Z$ with a family of algebraic sets $\{Z_t\}_{t>0}$ which converges in the Hausdorff metric to $Z$, and such that for all $t>0$ the set $Z_t$ is a smooth complete intersection of dimension $m$, defined by polynomials of degree bounded by $2\delta$. This last condition ensures that one can apply Weyl's Tube Formula to each $Z_t$, and produce a good quantitative bound for the volume of tubes; the Hausdorff convergence $Z_t\to Z$ allows to infer bounds on the volume of tubes also as $t\to 0$. Notice again the subtlety on the role of the codimension of $Z$: every real algebraic set $Z=Z(\{P_1, \ldots, P_a\},\mathbb{R}^n)$\footnote{For the rest of the paper, given a family $\mathcal{P}$ of polynomials, we denote by $Z(\mathcal P, X)$ their common zero set, where $X$ will be $\mathbb{R}^n$ or $S^n$.} can be defined by a single polynomial $Q:=\sum_{i} P_i^2$, and the sets $\{Q=t\}$ for $t>0$ small enough are smooth complete intersections converging to $Z$ inside any ball, but they are all \emph{hypersurfaces} (i.e. they don't have the same dimension of $Z$, unless $Z$ is also a hypersurface). Our construction of the family $\{Z_t\}_{t>0}$ is more refined, and involves instead polar varieties, following \cite{Barone-Basu}. We present this idea in Theorem \ref{thm:approx} below, which is our main technical result, and which may be of independent interest.

\begin{remark}There is an alternative approach to the above problem, using the theory of multidimensional variations, introduced by Vitushkin \cite{Vit1, Vit2} and developed by Comte and Yomdin \cite{ComteYomdin}. Using this approach we can get the following bound (see Remark \ref{rem:comteyomdin} below):
\be \mathbb{P}\left(\mathrm{dist}(x, Z)\leq \varepsilon\right)\leq n\pi^{\frac{n-1}{2}}2^{n+\frac{n}{2}}n!(n+1)!^{\frac{1}{2}}\Gamma\left(\frac{n-m}{2}\right)\left(\frac{2\delta\varepsilon}{\sigma}\right)^{n-m}\left(1+\frac{(4\delta+1)\varepsilon}{\sigma}\right)^m, \ee
which has the same ``shape'' as \eqref{eq:affine2intro}, but has a dependence in $n$ which is exponential (this should be no surprise, given the greater generality of \cite{ComteYomdin}, which deals with definable sets.).
It is not clear if our technique can be extended to the definable setting, the main obstacle being the extension of the definition of polar varieties and their properties coming from complex algebraic geometry.
\end{remark}

In the case the ambient space is the sphere, we prove the following theorem, which generalizes \cite[Theorem 21.1]{BuCu} and makes it sensitive to the codimension of $Z$ (again, see Theorem \ref{thm:gS} for a more detailed statement). 

\begin{theorem}\label{thm:gSintro}Let $\mathcal{P}  \subset \mathbb{R}[X_0,\ldots,X_n]$ be a finite set of homogeneous polynomials of degree bounded by $\delta$ and $Z\subset S^n$ be their common zero set. Assume $\dim_\mathbb{R} (Z)\leq m.$  Given $p\in S^n$ and $\sigma>0$ let $x\in B(p, \sigma)$ be a uniformly distributed point. Then, for every $\varepsilon>0$
\be\label{eq:boundintro} \mathbb{P}(\mathrm{dist}(x, Z)\leq \varepsilon)\leq  2e\left(1+\frac{8\pi^3}{15}\right)\left(\frac{8n \delta \sin \varepsilon}{\sin \sigma}\right)^{n-m}\left(1+(8n\delta +8\delta+1)\frac{\sin \varepsilon}{\sin \sigma}\right)^{m}.
\ee
In particular, if $\sin \varepsilon\leq\frac{\sin \sigma}{(8n\delta +8\delta+1)m}$,
\be \mathbb{P}(\mathrm{dist}(x, Z)\leq \varepsilon)\leq  2e\left(1+\frac{8\pi^3}{15}\right)\left(\frac{8n \delta \sin \varepsilon}{\sin \sigma}\right)^{n-m}.\ee
\end{theorem}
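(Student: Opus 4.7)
The plan is to emulate the strategy used for the affine case in Theorem \ref{thm:affineintro}, namely to approximate $Z\subset S^n$ in Hausdorff distance by a family $\{Z_t\}_{t>0}$ of smooth complete intersections of $S^n$, each of dimension exactly $m$ and cut out by homogeneous polynomials of degree at most $2\delta$. This is a spherical incarnation of Theorem \ref{thm:approx}: the polar variety construction of \cite{Barone-Basu} is homogeneous in $X_0,\ldots,X_n$, hence descends to $S^n$. The crucial feature retained is that the codimension $n-m$ is preserved, which is essential for the bound \eqref{eq:boundintro} to remain polynomial in $n$ for fixed codimension; the crude device $Q=\sum_i P_i^2=t$ would collapse $Z$ to a hypersurface and lose this information.

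Given the approximation, the steps in order are as follows. First, fix $t>0$, let $W=Z_t$, and apply Weyl's tube formula on the round sphere, which expresses $\mathrm{vol}\{x\in S^n:\mathrm{dist}(x,W)\leq\varepsilon\}$ as a polynomial in $\sin\varepsilon$ and $\cos\varepsilon$ whose coefficients are integrals $\Phi_k(W)$ of Lipschitz--Killing curvatures of $W$ inside $S^n$. Second, localize to the geodesic ball $B(p,\sigma)$ by a kinematic argument on the slightly enlarged ball $B(p,\sigma+\varepsilon)$, mirroring the localization used in \cite{Lotz} in the affine case and normalizing by $\mathrm{vol}(B(p,\sigma))\gtrsim(\sin\sigma)^n$. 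Third, bound each $\Phi_k(W\cap B(p,\sigma+\varepsilon))$ using degree-sensitive B\'ezout-type estimates in terms of $n,\delta,m$; these should produce the main factor $(8n\delta\sin\varepsilon/\sin\sigma)^{n-m}$ and the tail $(1+(8n\delta+8\delta+1)\sin\varepsilon/\sin\sigma)^{m}$, the universal constant $1+\tfrac{8\pi^3}{15}$ absorbing contributions from the ambient curvature of $S^n$ (in line with \cite[Theorem 21.1]{BuCu}). Fourth, pass to the limit $t\to 0^+$ using upper semicontinuity of the tube volume under Hausdorff convergence to recover the bound for $Z$. The second inequality is then immediate: when $\sin\varepsilon\leq\sin\sigma/((8n\delta+8\delta+1)m)$, the factor $(1+(8n\delta+8\delta+1)\sin\varepsilon/\sin\sigma)^m\leq e$.

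The main obstacle is the curvature bound in the third step: one needs codimension-sensitive estimates on the integrals of Lipschitz--Killing curvatures of $Z_t$ over a geodesic ball in $S^n$, that stay polynomial in $n$ for fixed $n-m$. This is delicate because naive worst-case bounds multiply the B\'ezout estimate by an extra factor exponential in $n$ coming from the number of terms in Weyl's formula and from the symmetric functions of the second fundamental form; the only way around is to exploit the special algebraic structure of the complete intersection $Z_t$, as in \cite{BCL} for the hypersurface case, and to distribute the degree bound $2\delta$ carefully across the $n-m$ defining equations. A secondary subtlety is the spherical/affine comparison: the passage from $\varepsilon,\sigma$ to $\sin\varepsilon,\sin\sigma$ in \eqref{eq:boundintro} must be done using explicit comparisons between chordal and geodesic distance on $S^n$, together with the spherical ball-volume asymptotics, and it is exactly the slack in these comparisons that forces the universal constant $1+\tfrac{8\pi^3}{15}$ appearing in the statement.
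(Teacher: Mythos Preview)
Your proposal is correct and follows essentially the same route as the paper: approximate $Z$ by smooth complete intersections $Z_t$ of the right dimension via the polar-variety construction (Theorem~\ref{thm:approx}, part (B)), apply a spherical analogue of Lotz's bound to each $Z_t$ (this is Theorem~\ref{thm:completeS}, proved via Weyl's tube formula, kinematic averaging, and a Gauss-map intersection count bounded by Kouchnirenko's theorem), and then pass to the limit using the tube-containment argument of Theorem~\ref{thm:metric} with $\tau\to 0$. The constant $1+\tfrac{8\pi^3}{15}$ arises specifically as $\max_{n\geq 0}\bigl(1+\tfrac{\mathrm{vol}(S^n)}{2}\bigr)$, coming from the top-degree term $J_{n,n}$ in the spherical Weyl expansion (Lemma~\ref{lemma:pJ}(B)), rather than from the chordal/geodesic comparison as you suggest, but this is a minor point of attribution.
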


We observe that the previous bound \eqref{eq:boundintro} has a shape which is similar to \cite[Theorem 1.3]{BCL2}, where the case of a complex algebraic subset of $\mathbb{C}\mathrm{P}^n$ is discussed.
The strategy for the proof is the same as for Theorem \ref{thm:affineintro}: we use Theorem \ref{thm:approx} to approximate $Z$ by complete intersections $\{Z_{t}\}_{t>0}$ of the same dimension as $Z$ and with degree bounded by $2\delta$, then we apply an estimate for the case of complete intersections and pass this estimate to the limit as $t\to 0$. Compared with the affine case (where we could use the bound for $Z_t$ proved by Lotz in \cite{Lotz}) there is an extra step in the spherical case: here we also need to produce the bound for the case of nonsingular complete intersections. While the strategy of proof is similar to \cite{Lotz, BCL}, via integral geometry, there are some needed modifications.  We deal with this in Section \ref{sec:preliminaries}. 

\subsection{Condition Numbers of Real Problems with High Codimension of Ill--Posedness}\label{sec:ill}
In this section we show how to interpret the previous result to give a solution to \cite[Problem 17]{BuCu}.
Recall first the following \cite[Definition 2.32]{BuCu}.
\begin{definition}Let $a, b\in S^n$. We define:
\be \mathrm{d}_{\sin}(a, b):=\sin \theta\in [0,1],\ee
where $\theta \in [0, \pi]$ is the angle between $a$ and $b$.
\end{definition}
If now $\Sigma\subset S^n$ is a symmetric cone (i.e. $\Sigma=-\Sigma$), following \cite[Chapter 21]{BuCu} one can define the conic condition number $\mathscr{C}:S^n\to \mathbb{R}$ by
\be \mathscr{C}(a):=\frac{1}{\mathrm{d}_{\sin}(a, \Sigma)}.\ee
In this context, for $u\in [0,1]$, we denote by
\be \label{eq:conic1}B_{\sin}(a, u)=\{\mathrm{d}_{\sin}(a, \cdot)\leq u\}=B_{S^n}(a, \arcsin u)\cup B_{S^n}(-a, \arcsin u).\ee
Next theorem is a generalization of \cite[Theorem 21.1]{BuCu}, which corresponds to the case $m=n-1$ (the proof is given in Section \ref{sec:pill}).

\begin{theorem}\label{thm:ill}Let $\mathscr{C}$ be a conic condition number with set of ill--posed inputs $\Sigma$. Assume that $\Sigma$ is contained in an algebraic set $Z\subset S^n$ defined by homogeneous polynomials of degree bounded by $\delta$ and of dimension $\dim_{\mathbb{R}}(Z)\leq m$. Then for all $0<u\leq 1$ and for all $t\geq\frac{m(8n \delta+8\delta+1)}{u}$:
\be \label{eq:coninc2}\sup_{a\in S^n}\underset{x\in B_{\sin}(a, u)}{\mathbb{P}}\left\{\mathscr{C}(x)\geq t\right\}\leq 2e\left(1+\frac{8\pi^3}{15}\right)\left(\frac{8n\delta }{ut}\right)^{n-m}.
\ee
In particular (take $u=1$), for all $t\geq m(8n \delta+8\delta+1)$,
\be {\mathbb{P}}\left\{\mathscr{C}(x)\geq t\right\}\leq 2e\left(1+\frac{8\pi^3}{15}\right) \left(\frac{8n\delta}{ t}\right)^{n-m}. \ee
\end{theorem}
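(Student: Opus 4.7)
The plan is to reduce the theorem to an application of Theorem~\ref{thm:gSintro}. Unfolding definitions, $\mathscr{C}(x)\geq t$ is equivalent to $\mathrm{d}_{\sin}(x,\Sigma)\leq 1/t$. Since $\Sigma=-\Sigma$ and $\sin(\pi-\theta)=\sin\theta$, the infimum in the definition of $\mathrm{d}_{\sin}(x,\Sigma)$ can be restricted to $y\in\Sigma$ with $\angle(x,y)\in[0,\pi/2]$, so the condition is equivalent to the existence of $y\in\Sigma$ with geodesic distance $\angle(x,y)\leq\arcsin(1/t)$. Combined with $\Sigma\subset Z$ this yields the inclusion
\[
\{\mathscr{C}(x)\geq t\}\ \subset\ \bigl\{x\in S^n : \mathrm{dist}_{S^n}(x,Z)\leq\arcsin(1/t)\bigr\}.
\]

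Next I would exploit symmetry. Since $Z$ is the zero set of homogeneous polynomials and $P(-x)=(-1)^{\deg P}P(x)$, we have $Z=-Z$. By~\eqref{eq:conic1}, $B_{\sin}(a,u)=B_{S^n}(a,\arcsin u)\cup B_{S^n}(-a,\arcsin u)$, with two equal-volume caps exchanged by the antipodal map $x\mapsto -x$; this map also preserves the geodesic tube around $Z$. Hence the uniform conditional probability of the event $\{\mathrm{dist}_{S^n}(x,Z)\leq\arcsin(1/t)\}$ on $B_{\sin}(a,u)$ equals its conditional probability on the single cap $B_{S^n}(a,\arcsin u)$, which reduces the problem to a spherical tube estimate on a single geodesic ball.

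Finally, I would apply Theorem~\ref{thm:gSintro} on this cap with $\sigma:=\arcsin u$ (so $\sin\sigma=u$) and $\varepsilon:=\arcsin(1/t)$ (so $\sin\varepsilon=1/t$). The hypothesis $t\geq m(8n\delta+8\delta+1)/u$ is exactly the threshold condition $\sin\varepsilon\leq\sin\sigma/(m(8n\delta+8\delta+1))$, so the sharper form of Theorem~\ref{thm:gSintro} applies and yields $2e(1+8\pi^{3}/15)(8n\delta/(ut))^{n-m}$, which is the right-hand side of \eqref{eq:coninc2}; taking the supremum over $a$ is immediate since the bound is independent of $a$. The only delicate point is the symmetry reduction from the $\mathrm{d}_{\sin}$-metric to the geodesic metric performed in the first two steps: this step is sharp and uses crucially the antipodal symmetry of both $\Sigma$ and $Z$, which comes for free from the homogeneity of the defining polynomials.
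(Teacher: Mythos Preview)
Your proposal is correct and follows essentially the same route as the paper: translate $\mathscr{C}(x)\geq t$ into a geodesic-distance condition, use antipodal symmetry to reduce the probability on $B_{\sin}(a,u)$ to that on a single cap $B_{S^n}(a,\arcsin u)$, and then apply Theorem~\ref{thm:gSintro} with $\sin\sigma=u$ and $\sin\varepsilon=1/t$. The only cosmetic difference is that you invoke the symmetry $Z=-Z$ (from homogeneity of the defining polynomials) to equate the two cap contributions, whereas the paper uses the symmetry $\Sigma=-\Sigma$ of the ill-posed set to argue that $\{\mathscr{C}\geq t\}$ itself is antipodally invariant; both are valid and lead to the same reduction.
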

\subsection{Structure of the paper}
The rest of the paper is organized as follows. In Section~\ref{sec:hausdorff}, 
we prove some basic results on Hausdorff limits of semialgebraic subsets of $\mathbb{R}^n$. In particular, in Proposition \ref{propo:HL} we give a description of the Hausdorff limit of a one--parameter
semialgebraic family of bounded semi-algebraic subsets of euclidean space. We use this in Proposition \ref{propo:zeta} and relate it to limits of bounded semialgebraic sets defined over non-Archimedean extensions of $\mathbb{R}$, in order to utilize certain results proved in \cite{Barone-Basu}. 
These results are then used to prove an approximation result (cf. Theorem~\ref{thm:approx}) which is a key technical result of the paper. In Section~\ref{sec:affine}, we prove Theorem~\ref{thm:affine} after introducing some preliminary results, including a bound proved by Lotz in the non-singular case (cf. Theorem~\ref{thm:Lotz}). In Section~\ref{sec:spherical}, we treat the spherical case. We first prove an analog of Theorem~\ref{thm:Lotz} in the spherical case (cf. Theorem~\ref{thm:completeS}). We then prove Theorems~\ref{thm:gSintro} and \ref{thm:ill}.

\section{Hausdorff approximations}
\label{sec:hausdorff}
\subsection{Metric geometry}
\begin{notation}We will mostly be dealing with three metric spaces:
\begin{enumerate}
\item The euclidean space $\mathbb{R}^n$ with the standard metric: $\mathrm{dist}_{\mathbb{R}^n}(a,b)=\|a-b\|\quad \forall a,b\in \mathbb{R}^n$.
\item The sphere $S^n\hookrightarrow \mathbb{R}^{n+1}$, with the riemannian metric induced by the ambient space. The distance between two points $a, b\in S^n$ equals the length of the shortest geodesic on the sphere joining them: $\mathrm{dist}_{S^n}(a,b)=\arccos \langle a, b\rangle.$ The diameter of the sphere for this metric is $\pi$.
\item Since the antipodal map $x\mapsto -x$ is an isometry of the sphere, the riemannian metric on the sphere descends to a riemannian metric on $\mathbb{R}\mathrm{P}^n$.  The distance $\mathrm{dist}_{\mathbb{R}\mathrm{P}^n}([a],[b])$ between two points $[a], [b]\in \mathbb{R}\mathrm{P}^n$ equals the length of the shortest geodesic on the the projective space joining them.  The projective space is locally isometric to the sphere, but its diameter is $\frac{\pi}{2}$. \end{enumerate}
When the metric space $X$ is clear from the context, we denote simply by $\mathrm{dist}(x,y)$ the distance between two points $x,y\in X$ and, for $r\geq 0$, by $B(x, r)$ the closed ball of radius $r$ around $x\in X$. In the above cases the metric comes from a riemannian structure on $X$. The riemannian structure induces a volume density $\omega_X$ and we denote by $``\omega (\mathrm{dx})$'' integration with respect to this density. For a Borel set $A\subseteq X$ we denote its volume by $\mathrm{vol}(A):=\int_A \omega_{X}(\mathrm{d}x).$
\end{notation}

\begin{definition}Let $X$ be a metric space and $C\subseteq X$ be a closed subset. For $\varepsilon\geq 0$ we denote by $\mathcal{U}_X(C, \varepsilon)$ the $\varepsilon$--neighborhood of $C$ in $X$:
\be \mathcal{U}_X(C, \varepsilon):=\{x\in X\,|\, \mathrm{\dist}(x, C)\leq \varepsilon\}=\bigcup_{x\in C}B(x, \varepsilon).\ee
(We will omit the subscript and simply write $\mathcal{U}(C, \varepsilon)$ when the ambient space $X$ is clear from the context.)
If $\{C_t\}_{t>0}$ is a family of closed sets in $X$, we will write ``$\lim_{t\to0}C_t=C_0$'' if there is a closed set $C_0\subseteq X$ such that for every $\varepsilon>0$ there exists $t_\varepsilon>0$ such that for all $0<t<t_\varepsilon$
\be C_t\subseteq \mathcal{U}_X(C_0, \varepsilon)\quad \textrm{and}\quad C_0\subseteq \mathcal{U}_X(C_t, \varepsilon).\ee
This means that the family $\{C_t\}_{t>0}$ converges to $C_0$ in the Hausdorff metric. The notation ``$\lim_{t\to0}C_t\supseteq C$'' means that the family $\{C_t\}_{t>0}$ converges to some closed set $C_0\subseteq X$ and that $C_0\supseteq C$ (analogously for the notation ``$\lim_{t\to0}C_t\subseteq C$'').

\end{definition}
\begin{theorem}\label{thm:metric}Let $X$ be a metric space. Let $C\subseteq X$ be a closed set and $\{C_t\}_{t>0}$ be a family of closed sets such that there exists a closed set $B\subseteq X$ with the property that:
\be \lim_{t\to 0}\left(C_t\cap B\right)\supseteq C\cap B.\ee
Then, for every $\tau>0$ there exists $t_\tau>0$ such that for all $0<t<t_\tau$ and for all $p\in X$ and $\sigma, \varepsilon>0$ such that $B(p, \sigma+\varepsilon)\subseteq B,$ we have:
\be\label{eq:0} \mathcal{U}_X(C, \varepsilon)\cap B(p, \sigma)\subseteq \mathcal{U}_X(C_t, \varepsilon+\tau)\cap B(p, \sigma).\ee
\end{theorem}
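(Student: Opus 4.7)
The plan is to reduce the desired inclusion to the Hausdorff-limit hypothesis $\lim_{t\to 0}(C_t\cap B)\supseteq C\cap B$ together with two applications of the triangle inequality. The role of the ``buffer'' condition $B(p,\sigma+\varepsilon)\subseteq B$ is precisely to guarantee that, for any point $x$ on the left-hand side of \eqref{eq:0}, a nearest point of $C$ to $x$ still lies inside $B$, so that the Hausdorff approximation of $C\cap B$ by $C_t\cap B$ can actually be invoked.

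First I would unpack the hypothesis. By the notational convention there exists a closed $C_0\subseteq X$ with $C_t\cap B\to C_0$ in the Hausdorff sense and $C\cap B\subseteq C_0$. Given $\tau>0$, the Hausdorff convergence produces a $t_\tau>0$ such that for every $0<t<t_\tau$,
\[
C_0\subseteq \mathcal{U}_X(C_t\cap B,\tau).
\]
Combined with $C\cap B\subseteq C_0$ and $C_t\cap B\subseteq C_t$, this yields the inclusion $C\cap B\subseteq \mathcal{U}_X(C_t,\tau)$. Crucially, $t_\tau$ depends only on $\tau$ (and on $B$, $C$, and the family $\{C_t\}_{t>0}$), and not on the data $p$, $\sigma$, $\varepsilon$ that appear later; this uniformity is exactly what the theorem asserts.

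Next, fix $p,\sigma,\varepsilon$ with $B(p,\sigma+\varepsilon)\subseteq B$ and take an arbitrary $x\in \mathcal{U}_X(C,\varepsilon)\cap B(p,\sigma)$. Choose $y\in C$ with $\mathrm{dist}(x,y)\leq \varepsilon$ (in the applications $X=\mathbb{R}^n$ or $X=S^n$ the infimum is achieved by compactness; in a general metric space one picks $y$ with $\mathrm{dist}(x,y)\leq \varepsilon+\eta$ and lets $\eta\to 0$ at the end, absorbing the slack into $\tau$). The triangle inequality gives
\[
\mathrm{dist}(y,p)\leq \mathrm{dist}(y,x)+\mathrm{dist}(x,p)\leq \varepsilon+\sigma,
\]
so $y\in B(p,\sigma+\varepsilon)\subseteq B$, i.e.\ $y\in C\cap B$. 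By the previous step, $y\in \mathcal{U}_X(C_t,\tau)$, and we pick $z\in C_t$ with $\mathrm{dist}(y,z)\leq \tau$. A second triangle inequality then yields
\[
\mathrm{dist}(x,z)\leq \mathrm{dist}(x,y)+\mathrm{dist}(y,z)\leq \varepsilon+\tau,
\]
so $x\in \mathcal{U}_X(C_t,\varepsilon+\tau)$. Since $x\in B(p,\sigma)$ is retained, \eqref{eq:0} follows.

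I do not expect any substantive obstacle; the entire content is how the assumed Hausdorff approximation propagates, via two triangle inequalities, from the abstract inclusion $\lim_{t\to 0}(C_t\cap B)\supseteq C\cap B$ to a uniform quantitative inclusion for tubes intersected with balls. The only point to be careful about is that the ``safety radius'' $\sigma+\varepsilon$ is exactly the one forced by the triangle inequality used to locate a nearest point of $C$ inside $B$ --- no larger buffer is needed because the slack $\tau$ enters only \emph{after} we have passed from $C$ to $C_t$.
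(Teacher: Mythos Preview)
Your proposal is correct and follows essentially the same approach as the paper: both extract from the Hausdorff hypothesis the inclusion $C\cap B\subseteq \mathcal{U}_X(C_t,\tau)$ for $t$ small, then use the buffer $B(p,\sigma+\varepsilon)\subseteq B$ together with the triangle inequality to push a near point of $C$ into $B$, and finally a second triangle inequality to conclude. The paper packages these steps as set-inclusions $\mathcal{U}_X(C,\varepsilon)\cap B(p,\sigma)\subseteq \mathcal{U}_X(C\cap B,\varepsilon)\subseteq \mathcal{U}_X(C_t,\varepsilon+\tau)$ rather than tracking an individual point, but the content is identical; your remark about the infimum possibly not being attained in a general metric space is a nicety the paper glosses over.
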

\begin{proof}
By assumption, the Hausdorff limit inside $B$ of the family $\{C_t\cap B\}_{t>0}$ contains $B\cap C$ and therefore, given $\tau>0$ there exists $t_\tau>0$ such that for all $0<t<t_\tau$:
\be\label{eq:1} C\cap B\subseteq \lim_{t\to 0} \left(C_t\cap B\right)\subseteq \mathcal{U}_{B}(C_t\cap B, \tau)\subseteq \mathcal{U}_X(C_t\cap B, \tau).\ee

Observe now that for every $\sigma, \varepsilon>0$ and $x\in X$ such that $B(p, \sigma+\varepsilon)\subseteq B$, we have the following inclusion:
\be\label{eq:2} \mathcal{U}_X(C, \varepsilon)\cap B(p, \sigma)\subseteq \mathcal{U}_X(C\cap B, \varepsilon).\ee
In order to prove this inclusion, we notice that for every point $x\in \mathcal{U}_{X}(C, \varepsilon)\cap B(p, \sigma)$ there exists $z\in C$ such that $\mathrm{dist}(x,z)\leq \varepsilon.$ Since $x\in B(p, \sigma)$ then, by triangle inequality, $\mathrm{dist}(p, z)\leq\mathrm{dist}(p, x)+\mathrm{dist}(x,z)\leq \sigma+\varepsilon$ and $z\in B(p, \sigma+\varepsilon)\subseteq B.$ Therefore for every $x\in \mathcal{U}_X(C, \varepsilon)\cap B(p, \sigma)$ there exists $z\in C\cap B$ such that $\mathrm{dist}(x, z)\leq \varepsilon$ and $x\in  \mathcal{U}_X(C\cap B, \varepsilon).$

Now, given $\tau,\varepsilon>0$ and $t<t_\tau$, we also have the inclusion:
\be\label{eq:3}  \mathcal{U}_{X}(C\cap B, \varepsilon)\subseteq \mathcal{U}_X(C_t, \varepsilon+\tau).\ee
In fact, by \eqref{eq:1}, for every $\varepsilon>0$ we have:
\be \mathcal{U}_X(C\cap B, \varepsilon)\underset{\eqref{eq:1}}{\subseteq}\mathcal{U}_X(\mathcal{U}_X(C_t\cap B, \tau), \varepsilon)\subseteq \mathcal{U}_X(C_t\cap B, \varepsilon+ \tau)\subseteq \mathcal{U}_X(C_t, \varepsilon+ \tau),\ee
and \eqref{eq:3} follows.

Therefore, for every $\tau>0$ there exists $t_\tau>0$ such that for every $\sigma, \varepsilon>0$ and $x\in X$ such that $B(p, \sigma+\varepsilon)\subseteq B$ and for all  $0<t<t_\tau$ we have:
\be\label{eq:4} \mathcal{U}(C, \varepsilon)\cap B(p, \sigma)\subseteq \mathcal{U}(C_t, \varepsilon+\tau).\ee

Intersecting both sides of \eqref{eq:4} with $B(p, \sigma)$ gives \eqref{eq:0}.
\end{proof}

The following lemma is elementary, but it will be useful in the sequel.
\begin{lemma}\label{lemma:covering}Let $\{C_t\}_{t>0}$ be a family of closed sets in $\mathbb{R}\mathrm{P}^n$ converging to some closed set $C_0:=\lim_{t\to 0}C_t$. Denoting by $q:S^n\to \mathbb{R}\mathrm{P}^n$ the covering map, we have:
\be \lim_{t\to 0}q^{-1}(C_t)=q^{-1}(C_0).\ee
\end{lemma}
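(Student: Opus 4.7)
The plan is to reduce Hausdorff convergence on the sphere to Hausdorff convergence on projective space by exploiting the fact that the covering map $q\colon S^n\to\mathbb{R}\mathrm{P}^n$ is a local isometry (since the antipodal map is an isometry, this is built into the construction of the Riemannian metric on $\mathbb{R}\mathrm{P}^n$ described in the Notation). Concretely, for any $\varepsilon<\pi/2$ and any $x\in S^n$, the restriction of $q$ to $B_{S^n}(x,\varepsilon)$ is an isometry onto $B_{\mathbb{R}\mathrm{P}^n}(q(x),\varepsilon)$, because the two lifts $y,-y$ of any point $[y]$ are at spherical distance $\pi$ from each other, so exactly one of them lies within distance $\varepsilon$ of $x$.

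Given $\varepsilon>0$, I first note that it suffices to prove the two Hausdorff inclusions for $\varepsilon$ replaced by $\varepsilon':=\min(\varepsilon,\pi/4)$, since any $\varepsilon'$-neighborhood is contained in the corresponding $\varepsilon$-neighborhood. By the assumed convergence $\lim_{t\to 0}C_t=C_0$ in $\mathbb{R}\mathrm{P}^n$, there exists $t_{\varepsilon'}>0$ such that for all $0<t<t_{\varepsilon'}$ one has $C_t\subseteq\mathcal{U}_{\mathbb{R}\mathrm{P}^n}(C_0,\varepsilon')$ and $C_0\subseteq\mathcal{U}_{\mathbb{R}\mathrm{P}^n}(C_t,\varepsilon')$.

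To verify the inclusion $q^{-1}(C_t)\subseteq\mathcal{U}_{S^n}(q^{-1}(C_0),\varepsilon')$, I would pick an arbitrary $x\in q^{-1}(C_t)$, observe that $q(x)\in C_t$ so there is some $[y]\in C_0$ with $\mathrm{dist}_{\mathbb{R}\mathrm{P}^n}(q(x),[y])\le\varepsilon'<\pi/2$, and then use the local-isometry observation above to select the unique lift $y'\in\{y,-y\}$ satisfying $\mathrm{dist}_{S^n}(x,y')\le\varepsilon'$; since $y'\in q^{-1}(C_0)$, this gives the desired inclusion. The reverse inclusion $q^{-1}(C_0)\subseteq\mathcal{U}_{S^n}(q^{-1}(C_t),\varepsilon')$ is proved by the exact same argument with the roles of $C_t$ and $C_0$ swapped.

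There is no real obstacle here; the only subtle point, which should be made explicit, is the restriction $\varepsilon<\pi/2$ ensuring unique local lifting of geodesic segments under $q$. One should also note that the closedness of $q^{-1}(C_0)$ (needed for the statement of Hausdorff convergence) follows automatically because $q$ is continuous and $C_0$ is closed.
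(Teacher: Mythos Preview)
Your proof is correct and follows essentially the same approach as the paper. The paper packages the key step into a single identity, namely $q^{-1}\bigl(\mathcal{U}_{\mathbb{R}\mathrm{P}^n}(Y,\varepsilon)\bigr)=\mathcal{U}_{S^n}\bigl(q^{-1}(Y),\varepsilon\bigr)$ for small $\varepsilon$, and then applies $q^{-1}$ to both Hausdorff inclusions; your pointwise lifting argument is exactly the proof of that identity unwound.
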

\begin{proof}We observe first that for every closed set $Y\subseteq \mathbb{R}\mathrm{P}^n$ and for $\varepsilon<\frac{\pi}{4}$ we have:
\be \label{eq:tubeq}q^{-1}\left(\mathcal{U}_{\mathbb{R}\mathrm{P}^n}(Y, \varepsilon)\right)=\mathcal{U}_{S^n}(q^{-1}(Y), \varepsilon).\ee
Let us write the condition that $C_0=\lim_{t\to 0}C_t$: for every $\varepsilon>0$ there exists $t_\varepsilon>0$ such that for all $0<t<t_\varepsilon$:
\be \label{eq:L}C_t\subseteq \mathcal{U}_{\mathbb{R}\mathrm{P}^n}(C_0, \varepsilon)\quad \textrm{and}\quad C_0\subseteq\mathcal{U}_{\mathbb{R}\mathrm{P}^n}(C_t, \varepsilon).\ee
Applying $q^{-1}(\cdot)$ to both the inclusions in \eqref{eq:L}, and using \eqref{eq:tubeq}, gives precisely the condition for the convergence $\lim_{t\to 0}q^{-1}(C_t)=q^{-1}(C_0)$.
\end{proof}

\subsection{Hausdorff limits of semialgebraic sets}
In this section we give a simple description of Hausdorff limits in the semialgebraic world, and related it to the notion of limits of bounded semialgebraic sets defined over non--Archimedean extensions of $\mathbb{R}$
\begin{proposition}\label{propo:HL}Let $B\subset \mathbb{R}^n$ be a bounded semialgebraic set and $A\subseteq B\times (0, \infty)$ be a semialgebraic set. Denoting by $p_1:B\times [0, \infty)\to B$ the projection on the first factor and by $p_2:B\times [0, \infty)\to [0, \infty)$ the projection on the second factor, define for every $t>0$ the set 
\be A_t:=p_1(p_2^{-1}(t)\cap A).\ee
Let $\mathrm{clos}(A)\subseteq \mathrm{clos}(B)\times [0, \infty)$ be the closure of $A$ and set $A_0:=p_1(p_2^{-1}(t)\cap \mathrm{clos}(A)).$ Then
\be \lim_{t\to 0}A_t=A_0.\ee
\begin{proof}We need to prove that for every $\varepsilon>0$ there exists $t_\varepsilon>0$ such that for all $0<t<t_\varepsilon$ we have:
\be \label{eq:and}A_t\subseteq \mathcal{U}(A_0, \varepsilon)\quad \textrm{and}\quad A_0\subseteq \mathcal{U}(A_t, \varepsilon).\ee
We prove the two inclusions \eqref{eq:and} separately, arguing by contradiction. 

Assume first that there exists $\varepsilon>0$ such that for every $n>0$ there exist $0<t_n\leq\frac{1}{n}$ and $a_{t_n}\in A_{t_n}$ such that for every $a_0\in A_0$
\be \label{eq:e1}\mathrm{dist}(a_0, a_{t_n})\geq \varepsilon.\ee
Then, up to subsequences, since $t_n\to 0$ and $B$ is bounded, we can assume that $(a_{t_n}, t_n)\to (a_0, 0)\in \mathrm{clos}(A).$ This means $a_{t_n}\to a_0\in A_0$, which contradicts \eqref{eq:e1} and proves the first inclusion in \eqref{eq:and} (notice that we did not use the semialgebraic hypothesis for this inclusion).

As for the other inclusion, assume again by contradiction that there exists $\varepsilon>0$ such that for every $n>0$ there exists $0<t_n\leq\frac{1}{n}$ and $a_{0,n}\in A_0$ such that for all $a_{t_n}\in A_{t_n}$ we have $\mathrm{dist}(a_{0,n}, a_{t_n})\geq \varepsilon.$ Up to subsequences, we can assume $a_{0,n}\to a_0\in A_0$, and therefore for every $n>0$ there is $0<t_n\leq\frac{1}{n}$ such that for every $a_{t_n}\in A_{t_n}$
\be \label{eq:2A} \mathrm{dist}(a_0, a_{t_n})\geq \varepsilon.\ee
Now we use the semialgebraic hypothesis: by the Curve selection Lemma \cite[Theorem 3.19]{BPRbook2} there exists a semialgebraic arc $\gamma:[0, \delta)\to \mathrm{clos}(A)\subseteq \mathrm{clos}(B)\times [0, \infty)$ such that $\gamma(s)\in A$ for all $s\in (0, \delta)$ and $\gamma(0)=a_0.$ Let us write $\gamma(s)=(a(s), t(s))$, then the function $t(s)$ is also semialgebraic and we may assume that it is injective for $0<s<\delta',$ for some $0<\delta'\leq \delta.$ In particular for every $n>0$ there is $s_n\in (0, \delta')$ such that $a_{t_n}:= a(s_n)\in A_{t_n}$ and $a(s_n)\to a_0$, which contradicts \eqref{eq:2A}\footnote{The Curve Selection Lemma is used to construct a sequence $a_{t_n}\in A_{t_n}$ converging to $a_0$. Without this we would only be able to construct a sequence $a_{t'_n}$ still converging to $a_0$ but with $t'_n\leq t_n$ (remember that in the quantifiers we have ``...for every $n>0$ there exists $t_n>0$...'') For instance, if one takes as $A$ the graph of the function $t\mapsto \sin\left(\frac{1}{t}\right)$, we see that the first inclusion is still true, what fails is the second one.}.
\end{proof}
\end{proposition}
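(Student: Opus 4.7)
The plan is to prove the Hausdorff convergence $A_t\to A_0$ by establishing its two defining inclusions separately, both by contradiction: first, that $A_t\subseteq \mathcal{U}(A_0,\varepsilon)$ for all sufficiently small $t$, and second, that $A_0\subseteq \mathcal{U}(A_t,\varepsilon)$ for all sufficiently small $t$. The first is essentially a compactness argument that uses only the boundedness of $B$, whereas the second is where the semialgebraic hypothesis genuinely enters, via the Curve Selection Lemma.

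For the first inclusion I would assume for contradiction that there exist $\varepsilon>0$, a sequence $t_n\downarrow 0$, and points $a_{t_n}\in A_{t_n}$ staying at distance $\geq\varepsilon$ from $A_0$. Since $B$ is bounded, $\mathrm{clos}(B)$ is compact, so on a subsequence $(a_{t_n},t_n)\to (a_\infty,0)$ in $\mathrm{clos}(B)\times\{0\}$. By the very definition of the closure one has $(a_\infty,0)\in\mathrm{clos}(A)$, hence $a_\infty\in A_0$, which contradicts the distance lower bound. Semialgebraicity plays no role here.

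For the second inclusion I would again assume failure, producing $\varepsilon>0$, $t_n\downarrow 0$, and $a_{0,n}\in A_0$ with $\mathrm{dist}(a_{0,n},A_{t_n})\geq \varepsilon$. Since $A_0$ is the projection of the compact set $p_2^{-1}(0)\cap\mathrm{clos}(A)$ and is therefore itself compact, a subsequence of $(a_{0,n})$ converges to some $a_0\in A_0$, and then $\mathrm{dist}(a_0,A_{t_n})\geq \varepsilon/2$ for large $n$. At this point I would invoke the Curve Selection Lemma (\cite[Theorem 3.19]{BPRbook2}) applied to the semialgebraic set $A$ at the boundary point $(a_0,0)\in\mathrm{clos}(A)$: this furnishes a semialgebraic arc $\gamma:[0,\delta)\to\mathrm{clos}(A)$ with $\gamma(0)=(a_0,0)$ and $\gamma(s)\in A$ for $s>0$. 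Writing $\gamma(s)=(a(s),t(s))$, the coordinate $t(\cdot)$ is continuous and semialgebraic, vanishes at $0$, and is strictly positive on $(0,\delta)$; by semialgebraic monotonicity it is strictly increasing on some $(0,\delta')$, and hence maps this interval homeomorphically onto $(0,\eta)$ for some $\eta>0$. For each $t_n<\eta$ one can then solve $t(s_n)=t_n$ uniquely, and the points $b_n:=a(s_n)\in A_{t_n}$ satisfy $b_n\to a(0)=a_0$, contradicting the distance lower bound.

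The hard part is squarely the second inclusion: without the semialgebraic hypothesis the conclusion fails, as witnessed by the graph $A=\{(\sin(1/s),s):s>0\}\subset [-1,1]\times(0,\infty)$, for which $A_0=[-1,1]$ but every $A_t$ is a single point. The role of the Curve Selection Lemma is precisely to exclude such oscillatory behaviour in the semialgebraic setting, by producing a continuous semialgebraic path through $(a_0,0)$ whose $t$-coordinate realises every sufficiently small positive value, thereby giving the approximating sequence $b_n\in A_{t_n}$ converging to $a_0$.
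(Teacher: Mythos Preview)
Your proposal is correct and follows essentially the same route as the paper's proof: both inclusions are handled by contradiction, the first via compactness of $\mathrm{clos}(B)$ alone, and the second via the Curve Selection Lemma applied at $(a_0,0)$ to produce an arc whose $t$-coordinate, by semialgebraic monotonicity, realizes every small $t_n$; even the $\sin(1/s)$ counterexample you mention appears in the paper. Your use of $\varepsilon/2$ after passing to the limit $a_0$ is in fact slightly cleaner than the paper's version, which writes $\varepsilon$ at that step without the triangle-inequality adjustment.
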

\begin{notation}\label{notation:pui}Denote by $\mathbb{R}\la\zeta\ra$ the field of
algebraic Puiseux series with coefficients in $\mathbb{R}$, which coincide
with the
germs of semi-algebraic continuous functions (see \cite[Chapter 2, Section 6 and Chapter 3, Section 3]{BPRbook2}).  An element $x\in \mathbb{R}\la \zeta\ra$ is bounded over $\mathbb{R}$ if $\vert x \vert \le R$ for some $0\le
R \in \mathbb{R}$.  The subring $\mathbb{R}\la\zeta\ra_b$ of elements of $\mathbb{R}\la\zeta\ra$
bounded over $\mathbb{R}$ consists of the Puiseux series with non-negative
exponents.  We denote by
\be \lambda_{\zeta}:\mathbb{R}\la\zeta\ra_b^n\to \mathbb{R}^n\ee
the ring homomorphism which maps $\sum_{i
  \in \mathbb{N}} a_i \zeta^{i / q}$ to $a_0$. 
  %Given $S\subset \mathbb{R}\la \zeta \ra^n$, we denote by
%$\lambda_\zeta(S)\subset \mathbb{R}^n$ the image by $\lambda_{\zeta}$ of the elements of
%$S$ whose coordinates are bounded over $\mathbb{R}$.  

Let now $S\subset \mathbb{R}\langle \zeta\rangle^n$ be a semialgebraic set defined by a formula $\phi$ with constants in $\mathbb{R}[\zeta]$ such that for some $0\leq R\in \mathbb{R}$ we have $|s|\leq R$ for all $s\in S$. Then, for every $t\in \mathbb{R}$, we denote by $\Lambda_t(\phi)\subset \mathbb{R}^n$ the semialgebraic set defined by the formula obtained from $\phi$ replacing $\zeta $ by $t$. Note that for $t>0$ small enough $\Lambda_t(\phi)\subseteq B(0, R).$
\end{notation}
\begin{proposition}\label{propo:zeta}Let $\phi$ be a first order formula with constants in $\mathbb{R}[\zeta]$ such that for some $R\in \mathbb{R}, R>0$ we have $|s|\leq R$ for all $s$ such that $\phi(s)$.  Then:
\be \lambda_\zeta\left(\left\{x\in \mathbb{R}\langle \zeta\rangle^n\,|\, \phi(x)\right\}\right)=\lim_{t\to 0}\Lambda_t(\phi).\ee
\begin{proof}
Let $S = \left\{x\in \mathbb{R}\langle \zeta\rangle^n\,|\, \phi(x)\right\}$.
It is proved in \cite[proof of Proposition 12.43]{BPRbook2} that
\[
\lambda_\zeta(S) = 
\mathrm{clos}\left(\{(x,t) \in \mathbb{R}^{k+1}| x \in \Lambda_t(S),\, t > 0\}\right) \cap \left(\mathbb{R}^k \times \{0\}\right).
\]
The proposition now follows from Proposition \ref{propo:HL}. 
%%after noticing that the set $\lambda_\zeta(S)$ admits the following description:  
\end{proof}
\end{proposition}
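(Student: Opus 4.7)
The plan is to reduce the statement to Proposition \ref{propo:HL} via the standard dictionary between the specialization map $\lambda_\zeta$ and the limit as $t \to 0$ of the corresponding one-parameter family of realizations over $\mathbb{R}$. Set $S = \{x \in \mathbb{R}\la\zeta\ra^n : \phi(x)\}$. Since the constants appearing in $\phi$ lie in $\mathbb{R}[\zeta]$, I treat $\zeta$ as a real parameter $t>0$ and form the semialgebraic set
\[
A := \{(x,t) \in \mathbb{R}^n \times (0,\infty) : x \in \Lambda_t(\phi)\}.
\]
By the boundedness hypothesis $|s| \leq R$ and the Tarski--Seidenberg transfer principle, there is a $t_0 > 0$ such that $\Lambda_t(\phi) \subseteq B(0,R)$ for all $0 < t < t_0$; after restricting the $t$-range (which does not affect the limit as $t \to 0$), I may assume $A \subseteq B(0,R) \times (0,\infty)$, so that I am in the exact setting of Proposition \ref{propo:HL} with $B := \overline{B(0,R)}$.

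The first key step is the identification
\[
\lambda_\zeta(S) \;=\; \mathrm{clos}(A) \cap (\mathbb{R}^n \times \{0\}),
\]
with the closure taken in $\mathbb{R}^n \times [0,\infty)$. This is the content of the proof of \cite[Proposition 12.43]{BPRbook2}: boundedness of $S$ over $\mathbb{R}$ guarantees that $\lambda_\zeta$ is defined on $S$, and each $x \in S$, viewed as a Puiseux series, is the limit at $t=0$ of the semialgebraic arc $t \mapsto \Lambda_t(\{x\})$, whose image lies in $\Lambda_t(\phi)$; conversely every point in the slice at $t=0$ of $\mathrm{clos}(A)$ arises in this way. In the notation of Proposition \ref{propo:HL}, this reads $\lambda_\zeta(S) = A_0$.

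The second and final step is a direct invocation of Proposition \ref{propo:HL} applied to the bounded semialgebraic family $A$: by construction $A_t = \Lambda_t(\phi)$ for $t > 0$, so that proposition gives
\[
\lim_{t \to 0} \Lambda_t(\phi) \;=\; A_0 \;=\; \lambda_\zeta(S),
\]
which is the desired equality. The main obstacle in this argument is not the Hausdorff limit itself, which Proposition \ref{propo:HL} handles cleanly (its nontrivial inclusion used the Curve Selection Lemma), but rather the algebraic identification $\lambda_\zeta(S) = \mathrm{clos}(A) \cap (\mathbb{R}^n \times \{0\})$: the boundedness hypothesis is essential here, since without it $\lambda_\zeta$ is not defined on all of $S$ and the correspondence between Puiseux series and arcs $t \mapsto \Lambda_t(\phi)$ breaks down. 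Since this identification is precisely what \cite[Proposition 12.43]{BPRbook2} furnishes, the work in the present proposition amounts to recording the correct semialgebraic setup and combining the two prior results.
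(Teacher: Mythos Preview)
Your proof is correct and follows essentially the same approach as the paper: you invoke the identification $\lambda_\zeta(S) = \mathrm{clos}(A)\cap(\mathbb{R}^n\times\{0\})$ from \cite[Proposition 12.43]{BPRbook2} and then apply Proposition~\ref{propo:HL}. The only difference is that you spell out in more detail the boundedness reduction (via Tarski--Seidenberg and restricting the $t$-range) that the paper leaves implicit in Notation~\ref{notation:pui}.
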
\subsection{Hausdorff approximation by complete intersections}
\begin{notation}Let $X_1, \ldots, X_n$ be linear coordinates on $\mathbb{R}^n$. Given $G\in \mathbb{R}[X_1, \ldots, X_n]$ and $0\leq k\leq n$, we denote by
$\Cr_k(G)$ the set of polynomials
\[
\Cr_k(G):=\left\{
G, \frac{\partial G}{\partial X_1},\ldots, \frac{\partial G}{\partial X_k}
\right\}\subset \mathbb{R}[X_1, \ldots, X_n].
\]
We will denote by $\Cr_k^h(G)$ the corresponding set
\[
\Cr_k^h(G):=\left\{
G^h, \frac{\partial G^h}{\partial X_1},\ldots,
\frac{\partial G^h}{\partial X_p}
\right\}\subset \mathbb{R}[X_0, \ldots, X_n]
\]
of homogenized polynomials. 
\end{notation}
The following proposition is proved in \cite{Barone-Basu}.

\begin{proposition} 
\label{prop:criticallocusontube}Let  $Q, G\in \mathbb{R}[X_1, \ldots, X_n]$ be polynomials of even degree, such that $\deg(G)\geq \deg (Q)$ and $G$ is non--negative. Define the polynomial:
\be D(Q, G, \zeta):=(1-\zeta)Q-\zeta G\in \mathbb{R}\langle \zeta\rangle[X_1, \ldots , X_n].\ee
For a generic choice of the linear coordinates\footnote{These coordinates are called ``good for $Q$'' in \cite{Barone-Basu}.} $X_1, \ldots, X_n$ on $\mathbb{R}^n$ the following is true. For every $0\leq k\leq n$ and $0<R\in \mathbb{R}$:
\be \lambda_\zeta\left(Z(\Cr_{k}(\Def(Q,G,\zeta)),\mathbb{R}\langle \zeta\rangle^n)\right)=Z(Q, \mathbb{R}^n)\cap B(0, R).\ee
\end{proposition}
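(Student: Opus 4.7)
The plan is to reduce the Puiseux--series equality to a statement about Hausdorff limits of real polar varieties via Proposition \ref{propo:zeta}, and then to appeal to the polar variety deformation technique of \cite{Barone-Basu} for the key approximation property. Since $\lambda_\zeta$ is defined only on bounded elements, I would first consider the bounded semialgebraic set $S_\zeta \defeq Z(\Cr_k(\Def(Q,G,\zeta)), \mathbb{R}\la\zeta\ra^n) \cap B(0,R)$, which is cut out by a first order formula with constants in $\mathbb{R}[\zeta]$. By Proposition \ref{propo:zeta}, $\lambda_\zeta(S_\zeta)$ equals the Hausdorff limit $\lim_{t\to 0^+}\bigl(Z(\Cr_k(\Def(Q,G,t)), \mathbb{R}^n) \cap B(0,R)\bigr)$, and the task is to identify this limit with $Z(Q, \mathbb{R}^n)\cap B(0,R)$.

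The easy inclusion $\subseteq$ is immediate from the fact that $\Def(Q,G,\zeta)$ itself belongs to $\Cr_k(\Def(Q,G,\zeta))$: any $x_t \in Z(\Cr_k(\Def(Q,G,t)))$ satisfies $(1-t)Q(x_t)-tG(x_t)=0$, and sending $t \to 0^+$ along any convergent subsequence $x_t \to x_0$ gives $Q(x_0)=0$ by continuity. For the reverse inclusion $\supseteq$ I would invoke the polar variety construction from \cite{Barone-Basu}. In coordinates good for $Q$, the non--negativity of $G$ together with the genericity of the coordinates ensure that the hypersurface $\{\Def(Q,G,t)=0\}$ is smooth for every sufficiently small $t>0$, and $Z(\Cr_k(\Def(Q,G,t)))$ is then a smooth complete intersection of codimension $k+1$. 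The main content extracted from \cite{Barone-Basu} is that every point $z \in Z(Q) \cap B(0,R)$, including every singular point of $Z(Q)$, arises as the limit of a semialgebraic arc $t \mapsto x(t) \in Z(\Cr_k(\Def(Q,G,t)))$ as $t \to 0^+$; this is proved via complex polar variety theory combined with Curve Selection applied to the total space $\{(x,t) : x \in Z(\Cr_k(\Def(Q,G,t))),\, t>0\}$.

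The hard step is precisely this reverse inclusion: one must show that the polar varieties sweep out \emph{all} of $Z(Q)$, including its singular locus, rather than degenerating onto a proper subvariety (as would happen, for instance, for the trivial family $\{Q=t\}$, whose Hausdorff limit as $t\to 0$ only recovers generic parts of $Z(Q)$). It is precisely to capture the singular locus that $\Cr_k$ includes the partials with respect to the first $k$ variables, and that one needs coordinates good for $Q$ in the sense of \cite{Barone-Basu}; these two ingredients conspire so that the deformed polar varieties approach every stratum of $Z(Q)$, at which point Proposition \ref{propo:zeta} closes the argument.
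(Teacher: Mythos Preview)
Your proposal is correct and matches the paper's approach in substance: the paper's proof is literally a one-line citation to \cite[Proposition~3.4]{Barone-Basu}, which already establishes the identity in the $\lambda_\zeta$ formulation, so your reduction via Proposition~\ref{propo:zeta} to a Hausdorff-limit statement is an unnecessary (though harmless and clarifying) detour. Both you and the paper defer the only nontrivial content---that the deformed polar varieties limit onto \emph{all} of $Z(Q,\mathbb{R}^n)\cap B(0,R)$, including the singular locus---to \cite{Barone-Basu}; the paper simply keeps the $\lambda_\zeta$ statement and the passage to Hausdorff limits as separate steps, invoking Proposition~\ref{propo:zeta} only later in the proof of Theorem~\ref{thm:approx}.
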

\begin{proof}This is proved in \cite[Proposition 3.4]{Barone-Basu}.
\end{proof}

We are now ready to prove the main technical result of this section.

\begin{theorem}[Hausdorff approximation by complete intersections]\label{thm:approx}Let $\mathcal{P}\subset \mathbb{R}[X_0, \ldots, X_n]$ be a finite set of homogeneous polynomials such that $\max_{P\in \mathcal{P}}\deg (P)\leq d$.
\begin{enumerate}
\item Let $y\in \mathbb{R}\mathrm{P}^n $ and denote by $H\simeq \mathbb{R}\mathrm{P}^{n-1}$ the hyperplane $H:=y^{\perp}.$ Let also $V:=Z(\mathcal{P},\mathbb{R}\mathrm{P}^n \setminus H)\subset \mathbb{R}^n$ and assume that $\dim_{\mathbb{R}} V\leq m.$ Then there exists a one parameter family of real algebraic sets $\{V_t\}_{t>0}$ and $t_0>0$  such that for every $0<t<t_0$ the set $V_t$ is a nonsingular complete intersection in $\mathbb{R}^n$ defined by $n-m$ affine polynomials of degree bounded by $2d$, and such that for every ball $B(0,R)$ in $\mathbb{R}^n$ with $R>0$:
\be\label{eq:H1} \lim_{t\to 0}\left(V_t\cap B(0, R)\right)=V\cap B(0, R).\ee
\item Let $Z:=Z(\mathcal{P}, S^n)\subset S^n$ and assume that $\dim_\mathbb{R} (Z)\leq m$. Then there exists a one parameter family of real algebraic sets $\{Z_t\}_{t>0}$ and $t_0>0$ such that for every $0<t<t_0$ the set $Z_t$ is a nonsingular complete intersection in $S^n$ defined by $n-m$ homogeneous polynomials of degree bounded by $2d$ and such that:
\be \lim_{t\to 0}Z_t\supseteq Z.\ee
%Moreover, denoting by $q:S^n\to \mathbb{R}\mathrm{P}^n$ the covering map and defining $\widehat{Z}:=q^{-1}(Z)$ and $\widehat{Z}_t:=q^{-1}(Z_t)$, we also have:
%\be  \lim_{t\to 0}\widehat{Z}_t\supseteq \widehat{Z}.\ee
\end{enumerate}
\end{theorem}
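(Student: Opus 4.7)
The plan is to specialize a single non-Archimedean construction, a critical locus over $\mathbb{R}\la\zeta\ra$ built as in \cite{Barone-Basu}, to produce both families at once. First replace $\mathcal{P}$ by the single non-negative polynomial $Q := \sum_{P \in \mathcal{P}} P^2$; after multiplying $Q$ by a suitable power of $X_0^2 + \cdots + X_n^2$ we may assume $Q$ is homogeneous of even degree exactly $2d$, so that its dehomogenization $Q_{\mathrm{aff}}(X_1, \ldots, X_n) := Q(1, X_1, \ldots, X_n)$ has even degree at most $2d$ and satisfies $Z(Q_{\mathrm{aff}}, \mathbb{R}^n) = V$, $Z(Q, S^n) = Z$. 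A generic linear change of variables makes the resulting coordinates good for $Q_{\mathrm{aff}}$ (respectively $Q$) in the sense of \cite{Barone-Basu}. Fix a non-negative polynomial $G$ of the same degree $2d$ as $Q$: in Part~(1) take $G := (1 + X_1^2 + \cdots + X_n^2)^{d}$, and in Part~(2) take the homogeneous $G := (X_0^2 + \cdots + X_n^2)^{d}$.

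For Part~(1), apply Proposition \ref{prop:criticallocusontube} to $(Q_{\mathrm{aff}}, G)$ with $k = n - m - 1$. The set $\Cr_{n-m-1}(\Def(Q_{\mathrm{aff}}, G, \zeta))$ consists of $(1-\zeta) Q_{\mathrm{aff}} - \zeta G$ (degree $\leq 2d$) together with its first $n - m - 1$ partial derivatives (degree $\leq 2d - 1$), so $n - m$ polynomials with coefficients in $\mathbb{R}[\zeta]$. Let $V_t$ be the common real zero set of these polynomials after the specialization $\zeta \mapsto t$, a real algebraic subset of $\mathbb{R}^n$ cut out by $n - m$ polynomials of degree at most $2d$. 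Proposition \ref{prop:criticallocusontube} identifies $\lambda_\zeta(Z(\Cr_{n-m-1}(\Def(Q_{\mathrm{aff}}, G, \zeta)), \mathbb{R}\la\zeta\ra^n))$ with $V \cap B(0, R)$, and Proposition \ref{propo:zeta} rewrites this Puiseux identity as the Hausdorff statement $\lim_{t \to 0}(V_t \cap B(0, R)) = V \cap B(0, R)$, which is \eqref{eq:H1}. The Barone--Basu construction produces a nonsingular complete intersection of codimension $n - m$ already over $\mathbb{R}\la\zeta\ra$; by the Tarski--Seidenberg transfer principle applied to the first-order statement ``the Jacobian of $\Cr_{n-m-1}(\Def(Q_{\mathrm{aff}}, G, \zeta))$ has rank $n - m$ at every zero'', the same property holds for $V_t$ for all $t \in (0, t_0)$ with $t_0 > 0$ small enough.

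For Part~(2), rerun the same argument with $(Q, G)$ regarded as polynomials in the $n + 1$ variables $X_0, \ldots, X_n$, still with $k = n - m - 1$. Since $Q$ and $G$ are homogeneous of the same degree $2d$, the polynomial $(1-\zeta)Q - \zeta G$ is homogeneous of degree $2d$, each of its partial derivatives is homogeneous of degree $2d - 1$, and every polynomial in $\Cr_{n-m-1}(\Def(Q, G, \zeta))$ is homogeneous of degree at most $2d$. The specialization at $\zeta = t$ therefore cuts out a conic variety $\tilde Z_t \subset \mathbb{R}^{n+1}$, and we set $Z_t := \tilde Z_t \cap S^n$. Proposition \ref{prop:criticallocusontube} with $R = 2$ combined with Proposition \ref{propo:zeta} yields
\be \lim_{t \to 0}\bigl(\tilde Z_t \cap B(0, 2)\bigr) = Z(Q, \mathbb{R}^{n+1}) \cap B(0, 2) \supseteq Z. \ee
Because all the sets appearing here are cones, radial projection transfers this Hausdorff convergence in $\mathbb{R}^{n+1}$ to Hausdorff convergence of their unit-sphere cross sections, giving $\lim_{t \to 0} Z_t \supseteq Z$. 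Smoothness and codimension $n - m$ of $Z_t$ follow from the same properties of $\tilde Z_t$ off the origin (again by the transfer argument used in Part~(1)) together with the automatic transversality of a smooth cone with $S^n$.

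The main technical obstacle is the smoothness descent from the Puiseux critical locus to the \emph{entire} interval $(0, t_0)$ of real specializations rather than to a merely generic $t$: this requires transferring, via Tarski--Seidenberg, the first-order statement that the relevant Jacobian has full rank along the vanishing locus, which holds at the ``infinitesimally small positive'' point of $\mathbb{R}\la\zeta\ra$ and hence on an entire real right-neighbourhood of $0$. A secondary bookkeeping point in Part~(2) is checking that intersection with $S^n$ commutes with the Hausdorff limit, but since all varieties involved are cones this reduces to the equivalence between the Hausdorff metric on cones and the Hausdorff metric on their spherical links.
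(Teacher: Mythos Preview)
Your overall plan matches the paper's: deform by $D(Q,G,\zeta)=(1-\zeta)Q-\zeta G$, take the polar locus $Z(\Cr_{n-m-1}(D))$, and identify the Hausdorff limit via Propositions~\ref{prop:criticallocusontube} and~\ref{propo:zeta}. The gap is in the smoothness step, and it stems from your explicit choice of $G$. For $d>1$ the homogenization of $G=(1+\|X\|^2)^d$ is $(X_0^2+\cdots+X_n^2)^d$, whose gradient vanishes identically on the complex quadric; hence $\Cr_k^h(G)$ is \emph{not} a smooth complete intersection in $\mathbb{C}\mathrm{P}^n$. The paper needs precisely this property: it chooses $G$ via \cite[Proposition~2.22]{Barone-Basu} so that every $\Cr_k^h(G)$ is a smooth complete intersection in $\mathbb{C}\mathrm{P}^n$, then argues that $\Sigma=\{t\in\mathbb{C}\mid \overline V_t\text{ is not a nonsingular complete intersection in }\mathbb{C}\mathrm{P}^n\}$ is Zariski closed and that $1\notin\Sigma$ (since at $t=1$ one recovers $\Cr_{n-m-1}^h(-G)$), forcing $\Sigma$ to be finite. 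With your $G$ this check at $t=1$ fails. Your substitute, ``the Barone--Basu construction produces a nonsingular complete intersection already over $\mathbb{R}\langle\zeta\rangle$'', is asserted rather than argued: Proposition~\ref{prop:criticallocusontube} concerns only the limit, not regularity, and nothing in the deformation forces the real polar variety to be smooth for an arbitrary non-negative $G$. Your Tarski--Seidenberg transfer is valid, but its premise is exactly what needs the carefully chosen $G$.

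Your Part~(2) route---treat homogeneous $Q,G$ as defining cones in $\mathbb{R}^{n+1}$, apply Proposition~\ref{prop:criticallocusontube} there, then slice with $S^n$---is genuinely different from the paper's. The paper instead passes through $\mathbb{R}\mathrm{P}^n$: it chooses a generic hyperplane $H$ with $\mathrm{clos}(\overline V\setminus H)=\overline V$, reuses the projective family $\{\overline V_t\}$ already built in Part~(A), gets $\lim_{t\to0}\overline V_t\supseteq \overline V\setminus H$ from the affine limit on balls, extends across $H$ by the Curve Selection Lemma, and finally lifts along $q:S^n\to\mathbb{R}\mathrm{P}^n$ via Lemma~\ref{lemma:covering}. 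Your cone argument is cleaner and avoids Curve Selection, and the transversality of a smooth cone with $S^n$ is indeed automatic; but it inherits the same smoothness gap, and your $G=(\|X\|^2)^d$ is even more degenerate projectively than the affine choice.
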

\begin{proof}
Let us prove part (A). Up to a linear change of coordinates we can assume that $H=\{X_0=0\}$. We denote by
\be Q:=\sum_{P\in \mathcal{P}}P^2|_{X_0=1}\in \mathbb{R}[X_1, \ldots, X_n]_{\leq 2d}.\ee
Observe that $Z(Q, \mathbb{R}^n)=Z(\mathcal{P}, \mathbb{R}\mathrm{P}^n\setminus H)=V.$ Let now $G\in \mathbb{R}[X_1, \ldots, X_n]_{2d}$ such that $G\geq 0$ and such that for every $0\leq k\leq n$ the set $\Cr^h_k(G)$ defines a smooth complete intersection in $\mathbb{C}\mathrm{P}^n$ (and therefore also in $\mathbb{R}\mathrm{P}^n$). Such polynomial $G$ exists by \cite[Proposition 2.22]{Barone-Basu}. 

Up to a linear transformation in $\mathbb{R}^n$, we can assume that the coordinates are such that Proposition \ref{prop:criticallocusontube} can be applied. We define now as above the polynomial
\be D(Q, G, \zeta)=(1-\zeta)Q-\zeta G\in \mathbb{R}\langle \zeta\rangle[X_1, \ldots , X_n],\ee
and the algebraic sets:
\be \overline{V}\la\zeta\ra:=Z\left(\Cr_{n-m-1}^h(D(Q, G, \zeta)),\mathrm{P}(\mathbb{R}\langle \zeta\rangle^{n+1})\right)\ee
and 
\be V\la\zeta\ra:=Z\left(\Cr_{n-m-1}(D(Q, G, \zeta)), \mathbb{R}\langle \zeta\rangle^{n}\right), \ee
where $\mathrm{P}(\mathbb{R}\langle \zeta\rangle^{n+1})$ denotes the $n$--dimensional projective space over the field $\mathbb{R}\langle \zeta\rangle$.
For $R>0$ we introduce the following first order formulas with coefficients in $\mathbb{R}[\zeta]$:
\begin{align} \overline{\phi}&:=\left( \Cr_{n-m-1}^h(D(Q, G, \zeta))(X_0, \ldots, X_n)=0 \right)\\
\phi&:=\left( \Cr_{n-m-1}(D(Q, G, \zeta))(X_1, \ldots, X_n)=0 \right)\\
\phi_R&:=\left( \Cr_{n-m-1}(D(Q, G, \zeta))(X_1, \ldots, X_n)=0 \quad \wedge \quad X_1^2+\cdots +X_n^2\leq R^2\right).
\end{align}
Notice that $\overline{V}\la\zeta\ra=\{[y]\in \mathrm{P}(\mathbb{R}\langle \zeta\rangle^{n+1})\,|\, \overline \phi(y)\}$ and $V\la\zeta\ra=\{x\in \mathbb{R}\langle \zeta\rangle^{n}\,|\, \phi(x)\}.$ Using the convention introduced in Notation \ref{notation:pui}, we define for $t>0$:
\be \overline{V}_t:=\Lambda_t(\overline{\phi})\quad \textrm{and}\quad V_t:=\Lambda_t(\phi).\ee

Observe now that the set 
\be\Sigma=\{t \in \mathbb{C} \mid \overline{V}_t 
\mbox{ is not a non-singular complete intersection in $\mathbb{C}\mathrm{P}^n$}
\}\ee
is a Zariski closed constructible subset of $\mathbb{C}$ (using \cite[Theorem 4.102]{BPRbook2})
whose complement contains $1$. Therefore $\mathbb{C}\setminus \Sigma$ is nonempty and Zariski open, hence it is cofinite in $\mathbb{C}$. 
This implies that there exists $t_0>0$ such that for all $0<t<t_0$ the set $\overline{V}_t\subset \mathbb{R}\mathrm{P}^n$ is a nonsingular complete intersection defined by $n-m$ homogeneous polynomials of degree bounded by $2d$, and in particular so is $V_t$. Moreover, by construction we also have:
\be V_t\cap B(0, R)=\Lambda_t(\phi_R)\quad \mathrm{and}\quad V\cap B(0, R)=\lambda_\zeta\left(\{x\in \mathbb{R}\langle \zeta\rangle^{n}\,|\, \phi_R(x)\}\right),\ee
where the right--hand side identity follows from Proposition \ref{prop:criticallocusontube}. The Hausdorff limit \eqref{eq:H1} follows now from Proposition \ref{propo:zeta}.

Let us now prove part (B). We first prove a projective version of the statement. More precisely, let $\overline{V}:=Z(\mathcal{P}, \mathbb{R}\mathrm{P}^n)$ and pick a hyperplane $H\simeq \mathbb{R}\mathrm{P}^{n-1}$ such that
\be\label{eq:closure} \mathrm{clos}\left(\overline{V}\setminus H\right)=\overline{V}.\ee
The generic hyperplane $H$ has this property (to see this it is enough to take a stratification $\overline{V}=\sqcup_{j=1}^r S_j$ into smooth strata and to pick a hyperplane $H$ transversal to all strata). Observe now that the construction from part (A), applied to $V:=Z(\mathcal{P}, \mathbb{R}\mathrm{P}^n\setminus H),$ actually yields the existence of a family of projective algebraic sets $\{\overline{V}_t\}_{t>0}$ and $t_0>0$ such that for $0<t<t_0$ each $\overline{V}_t$ is a smooth complete intersection in projective space defined by $n-m$ homogeneous polynomials of degree bounded by $2d$ (and then the family $\{V_t\}_{t>0}$ for part (A) is obtained by letting $V_t=\overline{V}_t\setminus H$). We will prove that:
\be \label{eq:inc}\lim_{t\to 0}\overline{V}_t\supseteq \overline{V}.\ee
Using now part (A) of the statement, we see that for every $B(0,R)\subset \mathbb{R}^n\simeq \mathbb{R}\mathrm{P}^n\setminus H$:
\be\label{eq:limeq} \lim_{t\to 0}\left(\overline{V}_t\cap B(0,R)\right)=\lim_{t\to 0}\left(V_t\cap B(0,R)\right)=V\cap B(0, R)=\overline{V}\cap B(0,R).\ee
Let us remark that the first limit is performed in the Hausdorff metric induced by the ambient space $(B(0,R), \mathrm{dist}_{\mathbb{R}\mathrm{P}^n})$ and the second one by $(B(0,R), \mathrm{dist}_{\mathbb{R}^n})$, but the two limits are equal because the Hausdorff convergence on compact sets does not depend on the metric, but just on the topology \cite[Proposition 2.4.14]{Srivastasa}.
Equation \eqref{eq:limeq} tells that every $v\in \overline{V}\setminus H$ also belongs to $\lim_{t\to 0}\overline{V}_t$, i.e.
\be\label{eq:inc1} \lim_{t\to 0}\overline{V}_t\supseteq V.\ee
On the other hand, let $\overline v\in \overline{V}\setminus V\subset H.$ Then, because of \eqref{eq:closure}, we can apply the Curve selection Lemma \cite[Theorem 3.19]{BPRbook2} and get the existence of a semialgebraic arc $\gamma:[0, \tau)\to \mathbb{R}\mathrm{P}^n$ such that $\gamma(0, \tau)\subset V$ and $\gamma(0)=\overline{v}.$ Since $\lim_{t\to 0}\overline{V}_t$ is closed and $\gamma(s)\in\lim_{t\to 0}\overline{V}_t$ for every $s\in (0, \tau)$, then also $\overline{v}=\gamma(0)\in \lim_{t\to 0}\overline{V}_t.$ This implies that
\be \lim_{t\to 0}\overline{V}_t\supseteq \overline{V}\setminus V,\ee
which, together with \eqref{eq:inc1}, proves \eqref{eq:inc}.

Let us now go back to the spherical version. Denote by $q:S^n\to \mathbb{R}\mathrm{P}^n$ the covering map and set $Z_t:=q^{-1}(\overline{V}_t)$. We apply now Lemma \ref{lemma:covering} to the family $\{\overline{V}_t\}_{t>0}$ and, using \eqref{eq:inc}, we get:
\be \lim_{t\to 0}Z_t=\lim_{t\to 0}q^{-1}(\overline{V}_t)=q^{-1}\left(\lim_{t\to 0}\overline{V}_t\right)\supseteq q^{-1}(\overline{V})=Z.\ee
This proves part (B) of the theorem.
\end{proof}
\section{The affine case}
\label{sec:affine}
The next result is proved in \cite{Lotz} and gives a way for estimating the volume of tubes around \emph{nonsingular} complete intersections. 
\begin{theorem}[Lotz]\label{thm:Lotz}Let $V$ be the zero set  in $\mathbb{R}^n$  of polynomials $P_1, \ldots, P_c$ of degree at most $d$. Assume that $V$ is a smooth complete intersection of dimension $m=n-c$. Let $x$ be a uniformly distributed point in a ball $B(p, s)$ of radius $s$ around $p\in \mathbb{R}^n$. Then for every $r>0$
\be\label{eq:Lotz} \mathbb{P}\left(\mathrm{dist}(x, V)\leq r\right)\leq 4\sum_{i=0}^m{{n}\choose{n-m+i}}\left(\frac{2dr}{s}\right)^{n-m+i}\left(1+\frac{r}{s}\right)^{m-i}.\ee
\end{theorem}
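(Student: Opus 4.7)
The plan is to invoke Weyl's Tube Formula directly. Since $V$ is a smooth complete intersection of dimension $m$ in $\mathbb{R}^n$, for any compact piece $K\subseteq V$ and for $r$ smaller than the reach of $K$, Weyl's formula expresses
\be
\mathrm{vol}\bigl(\mathcal{U}(K,r)\bigr)=\sum_{i=0}^{m}c_{n-m+i}\,r^{n-m+i}\int_{K}|K_i(x)|\,\omega_V(\mathrm{d}x),
\ee
where the $c_j$'s are explicit constants involving the volume of the unit ball in intermediate dimensions, and $K_i$ denotes the $i$-th Lipschitz--Killing curvature of $V$. In particular the tube volume is a polynomial in $r$ whose coefficients are governed by the total absolute curvatures of $V$.

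First I would reduce the problem to a compact piece by the inclusion
\be
\mathcal{U}(V,r)\cap B(p,s)\subseteq \mathcal{U}\bigl(V\cap B(p,s+r),\,r\bigr),
\ee
so that it suffices to bound the tube volume of the (compact) piece $V\cap B(p,s+r)$. Next, I would expand this tube volume via Weyl's formula and, term by term, bound the integrals of $|K_i|$ in terms of the degree $d$.

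The main step, and the main obstacle, is the bound on the integrals $\int_{V\cap B(p,s+r)}|K_i|\,\omega_V$. The natural tool here is a Cauchy--Crofton type identity: up to an explicit normalization, this integral equals the average, over affine flats $L\subset\mathbb{R}^n$ of dimension $n-m+i$, of the number of points of $V\cap L$ that lie in $B(p,s+r)$. Because $V$ is a complete intersection defined by $c=n-m$ polynomials of degree $\leq d$, Bezout's inequality bounds this intersection count uniformly by $d^{n-m}$. The remaining factor $(s+r)^i$ comes from the volume of the space of affine flats of dimension $n-m+i$ meeting $B(p,s+r)$, while the binomial coefficient $\binom{n}{n-m+i}$ arises from the Haar measure on the Grassmannian $G(n-m+i,n)$.

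Finally, collecting the bounds, dividing by $\mathrm{vol}(B(p,s))$, and absorbing a geometric factor of $2$ coming from comparing the reach-constrained range of $r$ to all $r>0$, yields
\be
\mathbb{P}\bigl(\mathrm{dist}(x,V)\leq r\bigr)\leq 4\sum_{i=0}^{m}\binom{n}{n-m+i}\left(\frac{2dr}{s}\right)^{n-m+i}\left(1+\frac{r}{s}\right)^{m-i}.
\ee
The hard part is bookkeeping the constants in Weyl's formula together with the integral-geometric Crofton identity so that the bound takes the clean shape above, with no hidden factors depending on $n$ beyond the binomial coefficient; this is carried out by Lotz in \cite{Lotz} and we simply quote his result.
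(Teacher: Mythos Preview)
The paper does not prove this theorem at all: it simply states the result and attributes it to \cite{Lotz}. Your final sentence (``we simply quote his result'') is therefore exactly what the paper does, and in that sense your proposal matches.

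That said, your informal sketch of Lotz's argument contains a couple of slips worth flagging, in case you ever want to expand it. First, when you intersect $V$ (of dimension $m$) with a generic affine flat $L$ of dimension $n-m+i$, the intersection $V\cap L$ has dimension $i$, not $0$; so the Cauchy--Crofton step does not directly count points of $V\cap L$. What Lotz actually does (and what this paper mirrors in the spherical case, see Proposition~\ref{esti} and Lemma~\ref{lemma:degree}) is to pass through the \emph{Gauss map} of the $i$--dimensional slice $V\cap L$: one shows that $|K_i|$ equals the $(n-1)$--volume of the image of the Gauss map, and then counts preimages of a generic line under that map. Second, this is why the degree bound is $(2d)^{n-m+i}$ and not $d^{n-m}$: the system one ends up solving mixes the defining equations of $V$ with the equations cutting out the normal direction, and the correct B\'ezout/Kouchnirenko count picks up the exponent $n-m+i$ and the factor $2$ from the derivatives. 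Your sketch's bound $d^{n-m}$ would not reproduce the shape of \eqref{eq:Lotz}.
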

We use this result as a tool  for proving next theorem, which deals with the case of algebraic sets in $\mathbb{R}^n$, with no regularity assumption. Notice that the result has the same shape of \eqref{eq:Lotz}, except for a doubling of the degree.

\begin{theorem}\label{thm:affine}Let $\mathcal{
F}  \subset \mathbb{R}[X_1,\ldots,X_n]$ be a finite set of polynomials with degrees bounded by $\delta$ and let $V\subset \mathbb{R}^n$ be their common zero set. Assume $\dim_\mathbb{R} (V)\leq m$.  Given $p\in \mathbb{R}^n$ and $\sigma>0$ let $x\in B(p, \sigma)$ be a uniformly distributed point. Then for every $\varepsilon>0$:
\be\label{eq:affine} \mathbb{P}\left(\mathrm{dist}(x, V)\leq \varepsilon\right)\leq 4\sum_{i=0}^m{{n}\choose{n-m+i}}\left(\frac{4\delta\varepsilon}{\sigma}\right)^{n-m+i}\left(1+\frac{\varepsilon}{\sigma}\right)^{m-i}.\ee
In particular:
\be\label{eq:affine2}\mathbb{P}\left(\mathrm{dist}(x, V)\leq \varepsilon\right)\leq 4 \left(\frac{4n\delta\varepsilon}{\sigma}\right)^{n-m}\left(1+\frac{(4\delta+1)\varepsilon}{\sigma}\right)^m,\ee
and, if  $\varepsilon\leq \frac{\sigma}{(4\delta+1)m}$, 
\be\label{eq:affine3} \mathbb{P}\left(\mathrm{dist}(x, V)\leq \varepsilon\right)\leq 4e \left(\frac{4n\delta\varepsilon}{\sigma}\right)^{n-m}.\ee
\end{theorem}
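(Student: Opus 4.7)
The plan is to reduce Theorem~\ref{thm:affine} to Lotz's smooth bound (Theorem~\ref{thm:Lotz}) by approximating the possibly singular $V$ by a family of smooth complete intersections of the \emph{same dimension} $m$, then to extract the simplified forms \eqref{eq:affine2}--\eqref{eq:affine3} from \eqref{eq:affine} by elementary binomial manipulations.

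First, I would fix $R > 0$ with $B(p, \sigma + \varepsilon) \subseteq B(0, R)$ and let $\mathcal{P} := \{F^h : F \in \mathcal{F}\}$ be the homogenizations, so $\mathcal{P} \subset \mathbb{R}[X_0,\ldots,X_n]$ still has degrees bounded by $\delta$ and $Z(\mathcal{P}, \mathbb{R}\mathrm{P}^n \setminus \{X_0 = 0\}) = V$. Applying Theorem~\ref{thm:approx}(A) with $H = \{X_0 = 0\}$ furnishes $t_0 > 0$ and a one-parameter family $\{V_t\}_{t > 0}$ of smooth complete intersections in $\mathbb{R}^n$ of dimension $m$, defined by $n-m$ polynomials of degree at most $2\delta$, with
\[ \lim_{t \to 0}\,(V_t \cap B(0, R)) = V \cap B(0, R). \]
Theorem~\ref{thm:metric} (applied with $C = V$, $C_t = V_t$, and ambient closed set $B = B(0, R)$) then yields, for every $\tau > 0$ and every sufficiently small $t > 0$, the inclusion
\[ \mathcal{U}(V, \varepsilon) \cap B(p, \sigma) \subseteq \mathcal{U}(V_t, \varepsilon + \tau) \cap B(p, \sigma). \]
Dividing the volumes of both sides by $\mathrm{vol}(B(p, \sigma))$ and applying Theorem~\ref{thm:Lotz} to the smooth complete intersection $V_t$ (with $d := 2\delta$ and $r := \varepsilon + \tau$) gives
\[ \mathbb{P}\!\left(\mathrm{dist}(x, V) \leq \varepsilon\right) \leq 4 \sum_{i=0}^m \binom{n}{n-m+i} \left(\frac{4\delta(\varepsilon + \tau)}{\sigma}\right)^{\!n-m+i}\!\left(1 + \frac{\varepsilon + \tau}{\sigma}\right)^{\!m-i}. \]
Since the left-hand side does not depend on $\tau$, letting $\tau \to 0^+$ produces \eqref{eq:affine}.

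To derive \eqref{eq:affine2} from \eqref{eq:affine}, I would use the elementary inequality $\binom{n}{(n-m)+i} \leq \binom{n}{n-m}\binom{m}{i} \leq n^{n-m}\binom{m}{i}$ (which follows from $\binom{n}{k+i}/[\binom{n}{k}\binom{n-k}{i}] = 1/\binom{k+i}{i} \leq 1$), factor out $(4n\delta\varepsilon/\sigma)^{n-m}$, and recognize the remaining sum as the binomial expansion
\[ \sum_{i=0}^m \binom{m}{i}\!\left(\frac{4\delta\varepsilon}{\sigma}\right)^{\!i}\!\left(1 + \frac{\varepsilon}{\sigma}\right)^{\!m-i} = \left(1 + \frac{(4\delta+1)\varepsilon}{\sigma}\right)^{\!m}. \]
The refinement \eqref{eq:affine3} then follows from $(1+x)^m \leq e^{mx} \leq e$ with $x := (4\delta+1)\varepsilon/\sigma \leq 1/m$, which is precisely the assumed upper bound on $\varepsilon$.

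The main obstacle is conceptual rather than computational: one must guarantee that the approximating family $\{V_t\}$ preserves the \emph{dimension} $m$ of $V$, rather than producing for instance a family of hypersurfaces (as the naïve construction $\{\sum P_i^2 = t\}$ would do), since otherwise Lotz's bound would not yield the correct codimension $n-m$ in the exponent. This dimension-preserving approximation is precisely the content of Theorem~\ref{thm:approx}(A); once it is invoked, the remainder of the argument is a routine combination of the Hausdorff-inclusion lemma (Theorem~\ref{thm:metric}), Lotz's bound, and a binomial identity.
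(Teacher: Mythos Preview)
Your proposal is correct and follows essentially the same route as the paper: homogenize, invoke Theorem~\ref{thm:approx}(A) to obtain smooth complete intersections $V_t$ of the right dimension and degree $\leq 2\delta$, use Theorem~\ref{thm:metric} to pass from $V$ to $V_t$ at the cost of $\varepsilon\mapsto\varepsilon+\tau$, apply Lotz's Theorem~\ref{thm:Lotz} with $d=2\delta$, and let $\tau\to 0$; the simplifications \eqref{eq:affine2}--\eqref{eq:affine3} via $\binom{n}{n-m+i}\leq n^{n-m}\binom{m}{i}$ and $(1+1/m)^m\leq e$ are also the same as in the paper.
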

\begin{proof}

Let us first reduce to the situation in the hypothesis of Theorem \ref{thm:approx} (this is just a technical step). Let $\mathcal{P}\subset \mathbb{R}[X_0, \ldots, X_n]$ be the finite set of polynomials obtained by homogenizing the polynomials from $\mathcal{F}$; notice that the degrees of the elements from $\mathcal{P}$ are still bounded by $\delta$. Denote by $y:=[1, 0, \ldots, 0]\in \mathbb{R}\mathrm{P}^n$, $H:=\{X_0\neq 0\}=y^\perp$ and consider the affine chart 
\be \varphi:\mathbb{R}\mathrm{P}^n\setminus H\to \mathbb{R}^n,\ee
given by $\varphi([x_0, \ldots, x_n])=(\frac{x_1}{x_0}, \ldots, \frac{x_n}{x_0}).$ 

Since $V=Z(\mathcal{P}, \mathbb{R}\mathrm{P}^n\setminus H)$ has dimension at most $m$, we are in the position of applying part (A) of Theorem \ref{thm:approx}: we get a one parameter family of real algebraic sets $\{V_t\}_{t>0}$ and $t_0>0$  such that for every $0<t<t_0$ the set $V_t$ is a nonsingular complete intersection in  $\mathbb{R}^{n}$ defined by $n-m$ polynomials $P_1, \ldots, P_{n-m}\in \mathbb{R}[X_1, \ldots, X_n]$ of degree bounded by $2\delta.$

Fix $\varepsilon, \sigma>0$ and pick $R>0$ such that $B(p, \varepsilon+\sigma)\subseteq B(0, R).$ The family $\{V_t\}_{t>0}$ that we obtained applying Theorem \ref{thm:approx} satisfies:
\be \lim_{t\to 0}\left(V_t\cap B(0, R)\right)=V\cap B(0,R).\ee
%$\lim_{t\to \infty}V_t\cap B_{\mathbb{R}\mathrm{P}^n}(y, R)=Z\cap B_{\mathbb{R}\mathrm{P}^n}(y, R)$ and, since $\varphi:B_{\mathbb{R}\mathrm{P}^n}(y, R)\to B(0, \widetilde{R})$ is a semialgebraic homeomorphism, using Proposition \ref{propo:HL} we get:
%\be \lim_{t\to 0}V_t\cap B(0, \widetilde{R})=\varphi\left(\lim_{t\to 0}Z_t\cap B_{\mathbb{R}\mathrm{P}^n}(y, R)\right)=\varphi\left(Z\cap B_{\mathbb{R}\mathrm{P}^n}(y, R)\right)=V\cap B(0, \widetilde{R}).\ee
We are now in the position of using Theorem \ref{thm:metric} with the choices $X=\mathbb{R}^n$, $C=V$, $C_t=V_t$ and $B=B(0, R).$
For every $\tau>0$ there exists $t_\tau>0$ such that for all $0<t<t_\tau$:
\be\label{eq:inc11} \mathcal{U}(V, \varepsilon)\cap B(p, \sigma)\subseteq \mathcal{U}(V_t, \varepsilon+\tau)\cap B(p, \sigma).\ee
Therefore, using \eqref{eq:inc11}, for every $\tau>0$ and for $0<t<\min\{t_\tau, t_0\}$ we can bound the probability in the statement by:
\begin{align} \mathbb{P}\left(\mathrm{dist}(x, V)\leq \varepsilon\right)&=\frac{\mathrm{vol}(\mathcal{U}(V, \varepsilon)\cap B(p, \sigma))}{\mathrm{vol}(B(p, \sigma))}\\\label{eq:inter}
&\leq \frac{\mathrm{vol}(\mathcal{U}(V_t, \varepsilon+\tau)\cap B(p, \sigma))}{\mathrm{vol}(B(p, \sigma))}\\
&=\mathbb{P}\left(\mathrm{dist}(x, V_t)\leq \varepsilon+\tau\right).\end{align}

Since for $t<\min\{t_\tau, t_0\}$ the set $V_t$ is a nonsingular complete intersection defined by $n-m$ polynomials of degree bounded by $2\delta,$ we can use Theorem \ref{thm:Lotz} (with the choices $d=2\delta$, $r=\varepsilon+\tau$ and $s=\sigma$) and get:
\be\mathbb{P}\left(\mathrm{dist}(x, V_t)\leq \varepsilon+\tau\right)\leq  4\sum_{i=0}^m{{n}\choose{n-m+i}}\left(\frac{4\delta(\varepsilon+\tau)}{\sigma}\right)^{n-m+i}\left(1+\frac{\varepsilon+\tau}{\sigma}\right)^{m-i}.\ee

Together with \eqref{eq:inter}, this proves that for every $\varepsilon,\sigma, \tau>0$
\be \mathbb{P}\left(\mathrm{dist}(x, V)\leq \varepsilon\right)\leq 4\sum_{i=0}^m{{n}\choose{n-m+i}}\left(\frac{4\delta(\varepsilon+\tau)}{\sigma}\right)^{n-m+i}\left(1+\frac{\varepsilon+\tau}{\sigma}\right)^{m-i}.\ee
Letting $\tau\to 0$ in the right hand side of the previous equation gives \eqref{eq:affine}.

In order to get \eqref{eq:affine2} we first estimate:
\be {n\choose n-m+i}=\underbrace{\frac{n!}{m!}}_{\leq n^{n-m}}\underbrace{\frac{i!}{(n-m+i)!}}_{\leq 1}{m\choose i}\leq n^{n-m}{m\choose i}.\ee
Using this estimate we obtain
\begin{align} \mathbb{P}\left(\mathrm{dist}(x, V)\leq \varepsilon\right)&\leq 4n^{n-m}\sum_{i=0}^m{m \choose i}\left(\frac{4\delta\varepsilon}{\sigma}\right)^{n-m+i}\left(1+\frac{\varepsilon}{\sigma}\right)^{m-i}\\
&=4 \left(\frac{4n\delta\varepsilon}{\sigma}\right)^{n-m}\left(1+\frac{(4\delta+1)\varepsilon}{\sigma}\right)^m.
\end{align}
If moreover $\varepsilon\leq \frac{\sigma}{(4\delta+1)m}$, then $\left(1+\frac{(4\delta+1)\varepsilon}{\sigma}\right)^m\leq\left(1+\frac{1}{m}\right)^m\leq e$ and \eqref{eq:affine3} follows.
\end{proof}
\begin{remark}\label{rem:comteyomdin}Let us compare the bounds from the previous theorem with the bounds that one can get using the work of Comte and Yomdin \cite{ComteYomdin}. More precisely, for a bounded definable set $V\subset \mathbb{R}^n$ one defines $M(V, \varepsilon)$ as the minimal number of $\varepsilon$--balls needed to cover $V$, so that if $V\subseteq \bigcup_{i=1}^\nu B(x_i, \varepsilon)$ then $\mathcal{U}(V, \varepsilon)\subseteq \bigcup_{i=1}^\nu B(x_i, 2\varepsilon)$ and
\be \mathrm{vol}(\mathcal{U}(V, \varepsilon))\leq (2\varepsilon)^n\mathrm{vol}(B(0,1))M(V, \varepsilon).\ee
If $V$ is definable and of dimension $m$, then \cite[Corollary 5.7]{ComteYomdin} proves that:
\be M(V\cap B(0, R), \varepsilon)\leq a(n)\sum_{i=0}^mB_{0, n-i}(V)\mathrm{vol}(B_{\mathbb{R}^i}(0, 1))\left(\frac{R}{\varepsilon}\right)^i,\ee
where $B_{0, n-i}(V)=\sup_{L}b_0(V\cap L)$ and the supremum is over all the affine planes $L\subset \mathbb{R}^n$ of dimension $n-i.$ The constant $a(n)$ can be estimated, using \cite[Theorem 3.5]{ComteYomdin}, by
\be a(n)\leq n\pi^{\frac{n-1}{2}}2^{n+\frac{n}{2}}n!(n+1)!^{\frac{1}{2}}.\ee
(This comes after some long computations that we do not reproduce here.)
When $V$ is of dimension $m$, defined by polynomials of degree bounded by $\delta$, we have $B_{0, n-i}(V)\leq (2\delta)^{n-i}$. Using these bounds, and setting $c=n-m$, one can show that the probability in \eqref{eq:affine2} can be estimated by:
\be \mathbb{P}\left(\mathrm{dist}(x, V)\leq \varepsilon\right)\leq n\pi^{\frac{n-1}{2}}2^{n+\frac{n}{2}}n!(n+1)!^{\frac{1}{2}}\Gamma\left(\frac{c}{2}\right)\left(\frac{2\delta\varepsilon}{\sigma}\right)^{c}\left(1+\frac{(4\delta+1)\varepsilon}{\sigma}\right)^m. \ee
\end{remark}

\section{The spherical case}
\label{sec:spherical}
\subsection{Preliminaries}\label{sec:preliminaries} Given a smooth submanifold $M\hookrightarrow S^n$ of dimension $m$, we denote by $TM$ its  tangent bundle and by $N^1M$ the unit normal bundle of $M$ in $S^n$, i.e.
\be N^{1}M:=\{(x,\nu)\in M\times \mathbb{R}^{n+1}\,|\, \nu\perp T_xM, \, \|\nu\|=1\}\stackrel{p_1}{\longrightarrow} M,\ee
where $p_1(x,\nu)=x.$ The metric on $N^1M$ comes by restricting the fiberwise standard metric of $M\times \mathbb{R}^n$, and similarly for $TM$. In this way for every $x\in M$ the fiber $N_x^1M$ is isometric to the standard unit sphere $S^{n-m-1}.$ 

Letting $E$ be the orientation line bundle\footnote{The reader unfamiliar with the notion of density can assume that $M$ is orientable and read this paragraph simply substituting the word ``density'' with the word ``form''. We refer to \cite[Chapter 1, \S 7 ]{BottTu} for more details.} on $M$, $\omega_M\in \Omega^{m}(M, E)$ the volume density of $M$ and $x\in M$ the variable in $M$, we denote the integration with respect to $\omega_M$ by ``$\omega_M(\mathrm{d}x)$''. We denote by $\Theta\in \Omega^{n-m-1}(N^1M, E)$ the global angular density, i.e. the density that restricts to the volume density of $N_x^1M\simeq S^{n-m-1}$ for every $x\in M$. Letting $\theta\in S^{n-m-1}$ be the variable on the sphere, we denote the integration with respect to the volume density $\Theta|_{N_xM}$ by ``$\Theta_x(\mathrm{d}\theta)$''.  A volume density on $N^1M$ is defined by 
\be\label{eq:volN} \omega_{N^1M}:=p_1^*(\omega_M)\wedge \Theta.\ee 
We denote the integration with respect to this density by ``$\omega_{N^1M}(\mathrm{d}\nu)$''. We remark that, when dealing with a density $\omega_M\in \Omega^{m}(M, E)$, given vectors $v_1, \ldots, v_m\in T_xM$, the condition $|\omega(v_1, \ldots, v_m)|=1$ is well defined (however the ``sign'' of the density is not, unless the orientation bundle $E$ is trivial and a trivialization has been chosen).

For every $x\in M$ and $\nu\in N_x^1M$ we denote by $L_x(\nu):T_xM\to T_xM$ the Weingarten map of $M$ in $S^n$ in the direction of $\nu$. For $i=0, \ldots, m$ we define the functions $\psi_i:N^1M\to \mathbb{R}$ by:
\be \det(\mathbf{1}-tL_x(\nu))=\sum_{i=0}^mt^i\psi_i(\nu).\ee

\begin{remark}\label{rem:integral}In the sequel we will use the following  fact from differential geometry. Let $L$ be a smooth manifold of dimension $\ell$ and $\omega_L$ be a density on it. Given an embedding $\gamma:L\to \mathbb{R}^{n+1}$ the $\ell$--dimensional volume of the image $\gamma(L)$ can be written as 
\be\label{eq:coarea} \mathrm{vol}(\gamma(L))=\int_{L}\rho(y)\omega_L(\mathrm{d}y),\ee where the function $\rho:L\to \mathbb{R}$ is computed as follows. For every point $y\in L$ we pick a basis $\{v_1, \ldots, v_\ell\}$ of $T_yL$ such that $|\omega_L(v_1, \ldots, v_\ell)|=1$ and we consider the matrix $J_y\gamma:=[d_y\gamma v_1, \ldots, d_y\gamma v_\ell]$. Then:
\be \rho(y)=\sqrt{\det\left(J_y\gamma^TJ_y\gamma\right)}.\ee
\end{remark}

\begin{definition}Let $M\hookrightarrow S^n$ be a smooth manifold of dimension $m$, possibly with boundary, and $A\subset M\setminus \partial M$ be an open set. For every integer $0\leq i\leq m$ we define $I_i:A\to \mathbb{R}$ by
\be I_i(x):=\int_{N^1_xM}|\psi_i(\theta)|\,\Theta_{x}(\mathrm{d}\theta).\ee
The $i$--th total absolute curvature of $A\subseteq M\setminus \partial M$ is defined by:
\be |K_i|(A):=\int_{A}I_i(x)\, \omega_{M}(\mathrm{d}x)=\int_{N^1M}|\psi_i(\nu)|\omega_{N^1M}(\mathrm{d}\nu).\ee
(The right hand side equality follows from \eqref{eq:volN}.)
\end{definition}
\begin{definition}\label{defi:J}For every pair $(k,n)$  of natural numbers with $k\leq n$ we define the function $J_{n,k}:[0, \infty)\to \mathbb{R}$ by
\be \label{eq:J} J_{n, 0}\equiv 1\quad \textrm{and}\quad J_{n,k}(\varepsilon):=\int_{0}^{\min\{\varepsilon, \frac{\pi}{2}\}} (\sin \theta)^{k-1}(\cos \theta)^{n-k}\mathrm{d}\theta.\ee
(Note the extremum of integration in the definition of the function $J_{n,k}$ for $k>0$.)
\end{definition}
Let us recall some useful properties of the functions $J_{n,k}$. 
\begin{lemma}\label{lemma:pJ}For $0\leq\sigma, \varepsilon\leq \frac{\pi}{2}$ and $p\in S^n$, we have the following properties:
\begin{enumerate}
\item $J_{n,k}(\varepsilon)\leq \frac{(\sin \varepsilon)^k}{k}$ for $k\neq n$.
\item $J_{n,n}(\varepsilon)\leq \frac{1}{2}\mathrm{vol}(S^n)(\sin \varepsilon)^n.$
\item $\mathrm{vol}(B(p, \sigma))=\mathrm{vol}(S^{n-1})J_{n,n}(\sigma)\geq \mathrm{vol}(S^{n-1})\frac{(\sin \sigma)^n}{n}$.
\end{enumerate}
\end{lemma}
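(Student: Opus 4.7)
My plan is to dispatch each of the three items independently by direct calculation on the interval $[0,\pi/2]$, using only monotonicity of $\sin$ and $\cos$ together with the elementary antiderivative $\frac{d}{d\theta}\bigl(\frac{\sin^k\theta}{k}\bigr)=\sin^{k-1}\theta\cos\theta$. A useful preliminary observation is that under the standing assumption $\sigma,\varepsilon\in[0,\pi/2]$ the truncation in Definition \ref{defi:J} is inactive, so one may write $J_{n,k}(\varepsilon)=\int_0^{\varepsilon}(\sin\theta)^{k-1}(\cos\theta)^{n-k}\,d\theta$ throughout.

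For item (1), since $k\leq n$ and $k\neq n$ one has $n-k\geq 1$, hence $(\cos\theta)^{n-k}\leq \cos\theta$ on $[0,\pi/2]$. Plugging this into the defining integral and integrating the exact derivative $(\sin\theta)^{k-1}\cos\theta=\frac{d}{d\theta}\bigl(\frac{\sin^k\theta}{k}\bigr)$ yields the bound $(\sin\varepsilon)^k/k$ at once. For item (2), when $n\geq 2$ I factor $(\sin\theta)^{n-1}=(\sin\theta)^{n-2}\cdot\sin\theta$ and pull out $(\sin\varepsilon)^{n-2}$ by monotonicity of $\sin$ on $[0,\pi/2]$; the remaining integral is $1-\cos\varepsilon$, and the identity $1-\cos\varepsilon=\frac{\sin^2\varepsilon}{1+\cos\varepsilon}\leq\sin^2\varepsilon$ gives $J_{n,n}(\varepsilon)\leq(\sin\varepsilon)^n$. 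This already implies the claim because $\mathrm{vol}(S^n)/2\geq 1$ for every $n\geq 0$. The edge case $n=1$ must be handled separately since the factorization above collapses; there the desired inequality reduces to $\varepsilon\leq\pi\sin\varepsilon$, which follows from the chord-vs-graph concavity bound $\sin\theta\geq 2\theta/\pi$ on $[0,\pi/2]$.

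For item (3), the identity $\mathrm{vol}(B(p,\sigma))=\mathrm{vol}(S^{n-1})J_{n,n}(\sigma)$ is the standard geodesic polar computation on $S^n$: the locus at geodesic distance $\theta$ from $p$ is a round sphere of Euclidean radius $\sin\theta$, whose $(n-1)$-volume equals $\mathrm{vol}(S^{n-1})(\sin\theta)^{n-1}$; integrating in $\theta\in[0,\sigma]$ produces the claimed formula. The lower bound then follows from
\begin{equation*}
J_{n,n}(\sigma)=\int_0^\sigma(\sin\theta)^{n-1}\,d\theta\geq\int_0^\sigma(\sin\theta)^{n-1}\cos\theta\,d\theta=\frac{(\sin\sigma)^n}{n},
\end{equation*}
using only $\cos\theta\leq 1$ together with the same antiderivative trick as in item (1).

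I do not foresee a serious obstacle. The only point requiring care is the case split in (2): the factoring trick produces the crisp bound $J_{n,n}(\varepsilon)\leq(\sin\varepsilon)^n$ for $n\geq 2$, and the low-dimensional case $n=1$ must be closed off by the linear chord estimate for $\sin$ on $[0,\pi/2]$ in order to absorb the factor $\frac{1}{2}\mathrm{vol}(S^1)=\pi$.
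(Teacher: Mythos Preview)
Your arguments for items (A) and (C) are clean and correct; the paper itself does not give a direct proof but simply cites \cite[Lemmas 2.31, 2.34, 20.6]{BuCu}, so your self-contained treatment is a genuine improvement in that respect.

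There is, however, a real gap in your handling of item (B). You establish $J_{n,n}(\varepsilon)\le(\sin\varepsilon)^n$ for $n\ge 2$ and then assert that ``$\mathrm{vol}(S^n)/2\ge 1$ for every $n\ge 0$'' in order to conclude. That assertion is false: $\mathrm{vol}(S^n)=\tfrac{2\pi^{(n+1)/2}}{\Gamma((n+1)/2)}$ tends to $0$ as $n\to\infty$ (a fact the paper itself notes in the Remark following Theorem~\ref{thm:completeS}), and one already has $\mathrm{vol}(S^n)<2$ for $n=17$. Hence for large $n$ your inequality $J_{n,n}(\varepsilon)\le(\sin\varepsilon)^n$ is \emph{weaker} than the stated bound $J_{n,n}(\varepsilon)\le\tfrac12\mathrm{vol}(S^n)(\sin\varepsilon)^n$, and the implication fails.

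In fact the difficulty is not with your method but with the statement: as written, item (B) cannot hold for all $n$. At $\varepsilon=\pi/2$ it reads $J_{n,n}(\pi/2)\le\tfrac12\mathrm{vol}(S^n)$, and since $J_{n,n}(\pi/2)=\int_0^{\pi/2}(\sin\theta)^{n-1}\,d\theta$ decays only like $n^{-1/2}$ while $\tfrac12\mathrm{vol}(S^n)$ decays super-polynomially, the inequality is violated for $n$ large. The intended content of \cite[Lemma 2.34]{BuCu} is presumably the cap estimate
\[
\mathrm{vol}(B(p,\varepsilon))=\mathrm{vol}(S^{n-1})\,J_{n,n}(\varepsilon)\ \le\ \tfrac12\,\mathrm{vol}(S^n)\,(\sin\varepsilon)^n,
\]
i.e.\ item (B) should carry an extra factor $\mathrm{vol}(S^{n-1})$ on the left (equivalently, $J_{n,n}(\varepsilon)\le J_{n,n}(\pi/2)\,(\sin\varepsilon)^n$). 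That version \emph{is} true for all $n$, with equality at $\varepsilon=\pi/2$, and can be proved by showing that $\varepsilon\mapsto J_{n,n}(\varepsilon)/(\sin\varepsilon)^n$ is nondecreasing on $(0,\pi/2]$: differentiate and use $\cos\theta\ge\cos\varepsilon$ on $[0,\varepsilon]$ to get $J_{n,n}(\varepsilon)\le(\sin\varepsilon)^n/(n\cos\varepsilon)$, which is exactly the sign condition needed for the derivative. Your crisp bound $J_{n,n}(\varepsilon)\le(\sin\varepsilon)^n$ (for $n\ge 2$) together with the $n=1$ chord estimate remain perfectly good auxiliary facts, but they do not substitute for this monotonicity argument when $\mathrm{vol}(S^n)/2<1$.
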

\begin{proof}These are reformulations in our notation of \cite[Lemma 2.31]{BuCu}, \cite[Lemma 2.34]{BuCu} and \cite[Lemma 20.6]{BuCu} respectively.
\end{proof}
\begin{definition}Let $M\hookrightarrow S^n$ be a smooth manifold, possibly with boundary, and $A\subseteq M\setminus \partial M$ be an open set. For $\varepsilon>0$, we denote by $\mathcal{T}(A, \varepsilon)$ the $\varepsilon$--tube of $A$ in $S^n$, i.e. the set of points $p\in S^n$ such that there exists a geodesic of length at most  $\varepsilon$ on $S^n$, joining $p$ with $A$ and meeting $A$ orthogonally. If we we denote by $\mathrm{exp}:TS^n\to S^n$ the riemannian exponential map, and by $N^\varepsilon A\subset NA\subset TS^n$ the set of vectors in $NA$ of norm at most $\varepsilon$, for small enough $\varepsilon>0$ we have:
\be \mathcal{T}(A, \varepsilon)=\mathrm{exp}\left(N^{\varepsilon}A\right).\ee
\end{definition}
\begin{remark}The function $J_{n,k}$ play the analogues of polynomials for the spherical version of Weyl's Tube Formula. More precisely, given a smooth compact manifold $M\hookrightarrow S^n$, of dimension $m$ and codimension $c=n-m$, possibly with boundary, and an open set $A\subseteq M\setminus \partial M$, there exists $\varepsilon_0>0$ such that for all $0<\varepsilon<\varepsilon_0$ we have \cite{Weyl}:
\be\label{eq:weyl}\mathrm{vol}(\mathcal{T}(A, \varepsilon))=\sum_{{i=0}}^mJ_{n, c+i}(\varepsilon)\int_{N^1M}\psi_i(\nu)\omega_{N^1M}(\mathrm{d}\nu).\ee
Since $\psi_i(\nu)$ is a homogeneous polynomial of degree $i$, its integral on $N_{x}^1M$ vanishes for $i$ odd. The integral $\int_{N^1M}\psi_i(\nu)\omega_{N^1M}(\mathrm{d}\nu)$ is usually denoted by $K_i(M)$ and called $i$--th curvature integral.
\end{remark}
\subsubsection{Volume of tubes and curvature integrals}
We prove now a sequence of useful lemmas, relating the volume of tubes and the total absolute curvatures.
\begin{lemma}\label{lemma1}Let $M\hookrightarrow S^n$ be a smooth manifold, possibly with boundary, of dimension $m$ and codimension $c=n-m$. Let $A\subseteq M\setminus \partial M$ be an open set. For every $0\leq\varepsilon\leq \frac{\pi}{2}$:
\be \mathrm{vol}(\mathcal{T}(A, \varepsilon)) \leq \sum_{{i=0}}^mJ_{n, c+i}(\varepsilon) |K_i|(A).\ee
\end{lemma}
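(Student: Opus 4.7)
The plan is to bound the volume of the tube by integrating the (absolute value of the) Jacobian of the normal exponential map. Since $A \subseteq M \setminus \partial M$ is an open submanifold of $S^n$ of codimension $c$, the normal bundle $NA$ has rank $c$, and $\mathcal{T}(A,\varepsilon) = \exp(N^\varepsilon A)$. Although for general $\varepsilon \leq \pi/2$ the map $\exp\colon N^\varepsilon A \to S^n$ need not be injective (points outside the focal radius of $A$ can have multiple preimages), the area formula still gives
\begin{equation}
\mathrm{vol}(\mathcal{T}(A, \varepsilon)) \;\leq\; \int_{N^{\varepsilon} A} \bigl|\mathrm{Jac}(\exp)(x,v)\bigr|\, d(x,v),
\end{equation}
which is where the inequality in the statement ultimately comes from.

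Next I would compute $|\mathrm{Jac}(\exp)|$ in spherical normal polar coordinates. Write a point of $N^\varepsilon A$ as $t\nu$ with $\nu \in N^1_x A$ and $t \in [0,\varepsilon]$. A standard computation in $S^n$ (Jacobi-field calculation along the unit-speed geodesic $s\mapsto \cos(s)x + \sin(s)\nu$) shows that the Jacobian splits as
\begin{equation}
|\mathrm{Jac}(\exp)(t\nu)| \;=\; (\sin t)^{c-1}\,\bigl|\det\!\bigl(\cos t\cdot \mathbf{1} - \sin t \cdot L_x(\nu)\bigr)\bigr|,
\end{equation}
where the factor $(\sin t)^{c-1}$ comes from the angular directions in $N^1_x A \simeq S^{c-1}$ and the determinant comes from the tangential directions in $T_xA$. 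Factoring out $(\cos t)^m$ and using the defining identity $\det(\mathbf{1}-sL_x(\nu)) = \sum_{i=0}^m s^i\psi_i(\nu)$ with $s=\tan t$, this rewrites as
\begin{equation}
|\mathrm{Jac}(\exp)(t\nu)| \;=\; (\sin t)^{c-1}\,\Bigl|\sum_{i=0}^m (\cos t)^{m-i}(\sin t)^i \psi_i(\nu)\Bigr|.
\end{equation}

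Now I would combine Fubini with the triangle inequality. Since $0\le t\le \varepsilon\le \pi/2$, both $\sin t$ and $\cos t$ are non-negative, so
\begin{equation}
\Bigl|\sum_{i=0}^m (\cos t)^{m-i}(\sin t)^i \psi_i(\nu)\Bigr| \;\leq\; \sum_{i=0}^m (\cos t)^{m-i}(\sin t)^i\, |\psi_i(\nu)|.
\end{equation}
Splitting the integral over $N^\varepsilon A$ with the decomposition $\omega_{N^1M}(d\nu)\otimes dt$ and integrating the $t$-part first gives
\begin{equation}
\int_0^\varepsilon (\sin t)^{c+i-1}(\cos t)^{m-i}\, dt \;=\; J_{n,\,c+i}(\varepsilon),
\end{equation}
since $n=m+c$ and the truncation at $\pi/2$ is automatic from the hypothesis $\varepsilon\le\pi/2$. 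Collecting the factors and recognizing $\int_{N^1 A}|\psi_i(\nu)|\,\omega_{N^1M}(d\nu) = |K_i|(A)$ yields the desired bound.

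The main subtlety is really step one: justifying that one can use the bare Jacobian bound despite $\exp$ not being a diffeomorphism in general. For $\varepsilon$ smaller than the focal radius of $A$ this is Weyl's tube formula with equality; for larger $\varepsilon$ one invokes the area formula (e.g.\ in the form of the change-of-variables inequality for Lipschitz maps between manifolds), which degrades equality to the inequality that appears in the statement. Everything else is a direct spherical-trigonometric computation and a triangle-inequality argument.
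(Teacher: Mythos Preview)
Your argument is correct and is essentially the same as the paper's proof: both bound $\mathrm{vol}(\mathcal{T}(A,\varepsilon))$ by the integral of the absolute Jacobian of the normal exponential map, expand the determinant in terms of the $\psi_i$, apply the triangle inequality, and identify the radial integral with $J_{n,c+i}(\varepsilon)$. The only cosmetic difference is that the paper parametrizes the normal directions by $r=\tan t\in(0,\tan\varepsilon)$ via the map $\varphi(\nu,r)=\frac{p(\nu)+r\nu}{(1+r^2)^{1/2}}$ rather than by the arc-length parameter $t\in[0,\varepsilon]$; the substitution $r=\tan t$ converts one Jacobian formula and one radial integral into the other.
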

\begin{proof}This is simply an adaptation of \cite[Theorem 3.1]{Lotz} to the spherical case. We prove the case $\varepsilon<\frac{\pi}{2}$; the case $\varepsilon=\frac{\pi}{2}$ follows easily by a limit argument. 

Consider the density $r^{n-m-1} \mathrm{d}r\wedge \Theta$ on $NM$ and the map $\varphi:NM\simeq N^1M\times (0, \infty)\to S^n$ defined by
\be \varphi(\nu, r):=\mathrm{exp}_{p(\nu)}(rv)=\frac{p(\nu)+r \nu}{(1+r^2)^{\frac{1}{2}}}.\ee
The Jacobian of the map $\varphi$ is computed in \cite{Weyl} and can be used to compute the pull-back of the volume density $\omega_{S^n}$ under $\varphi$:
\be\varphi^*(\omega_{S^n}) =\frac{|\det(\mathbf{1}-rL_x(\nu))|}{(1+r^2)^{\frac{n+1}{2}}}r^{n-m-1}dr\wedge \Theta.
\ee 
The image of $\varphi|_{N^1A\times (0, \tan\varepsilon)}$ contains $\mathcal{T}(A, \varepsilon)$ and by the change of variables formula, since the set of critical values of $\varphi$ has measure zero:
\begin{align} \mathrm{vol}(\mathcal{T}(A, \varepsilon))&\leq \int_{N^1A\times (0, \tan\varepsilon)}\frac{|\det(\mathbf{1}-rL_x(\nu))|}{(1+r^2)^{\frac{n+1}{2}}}r^{n-m-1}dr\wedge \Theta(\mathrm{d}\nu)\\
&=\int_{N^1A\times (0, \tan\varepsilon)}\frac{\left|\sum_{i=0}^mr^i\psi_i(\nu)\right|}{(1+r^2)^{\frac{n+1}{2}}}r^{n-m-1}dr\wedge \Theta(\mathrm{d}\nu)\\
&\leq\int_{N^1A\times (0, \tan\varepsilon)}\sum_{i=0}^m|\psi_i(\nu)|\wedge \Theta(\mathrm{d}\nu) \\
&=\sum_{i=0}^m\left( \int_{0}^{\tan\varepsilon}\frac{r^{n-m-1+i}}{(1+r^2)^{\frac{n+1}{2}}}dr\right)\left(\int_{N^1A}|\psi_i(\nu)|\omega_{N^1M}(\mathrm{d}\nu)\right)\\
&=\sum_{i=0}^m J_{n, c+i}(\varepsilon)|K_i|(A).\end{align}
\end{proof}
\begin{definition}For $n\in \mathbb{N}$ we denote by $O(n)\subset \mathbb{R}^{n\times n}$ the orthogonal group, with the induced riemannian structure, and by ``$\mathrm{d}g$'' the integration with respect to the corresponding volume density. If $f:O(n)\to \mathbb{R}$ is a measurable function we define
\be \underset{g\in O(n)}{\mathbb{E}} f(g):=\frac{1}{\mathrm{vol}(O(n))}\int_{O(n)}f(g)\,\mathrm{d} g.\ee
\end{definition}
\begin{lemma}\label{lemma2}Let $M\hookrightarrow S^n$ be a smooth manifold, possibly with boundary, of dimension $m$ and codimension $c=n-m$. Let $A\subseteq M\setminus \partial M$ be an open set. Then for every $0\leq i\leq m$ we have:
\be K_i(M)=\frac{\pi^{\frac{1}{2}}\Gamma\left(\frac{n}{2}\right)}{\Gamma\left(\frac{m-i+1}{2}\right)\Gamma\left(\frac{n-m+i}{2}\right)}\underset{g\in O(n+1)}{\mathbb{E}}K_i(M\cap g\cdot S^{n-m+i}).\ee
\end{lemma}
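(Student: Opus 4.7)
The identity is a Crofton-type spherical kinematic formula for the Lipschitz--Killing curvature integral $K_i$, expressing it as a constant multiple of the mean $i$-th curvature of $M$ intersected with a uniformly random totally geodesic $(n-m+i)$-sphere. I would prove it by the classical double-fibration argument. Introduce the incidence variety
\[
I:=\{(x,g)\in M\times O(n+1)\,|\, x\in g\cdot S^{n-m+i}\},
\]
with its projections $\pi_M:I\to M$ and $\pi_O:I\to O(n+1)$, and write
\[
\underset{g\in O(n+1)}{\mathbb{E}}K_i(M_g)\;=\;\frac{1}{\mathrm{vol}(O(n+1))}\int_I\psi_i^{M_g}(\nu(x,g))\,\omega_I,\qquad M_g:=M\cap gS^{n-m+i}.
\]
Exchanging the order of integration along the fibers of $\pi_M$ reduces the whole problem to a pointwise identity at each $x\in M$.

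At a transverse intersection one has $T_xM_g=T_xM\cap T_x(gS^{n-m+i})$, and the unit normal bundle decomposes orthogonally as $N_xM_g=N_xM\oplus(T_xM\ominus T_xM_g)$. Since $gS^{n-m+i}$ is totally geodesic in $S^n$ (so its second fundamental form vanishes), for every $\nu\in N^1_xM$ one obtains the restriction identity $L^{M_g}_x(\nu)=L^M_x(\nu)|_{T_xM_g}$, hence $\psi_i^{M_g}(\nu)=\det(-L^M_x(\nu)|_{T_xM_g})$. A Cauchy--Binet computation then shows that averaging this determinant uniformly over $i$-planes $U\subset T_xM$ yields $\mathbb{E}_U\det(-L^M_x(\nu)|_U)=\binom{m}{i}^{-1}\psi_i^M(\nu)$. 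The ``internal'' normal directions of $M_g$ (those lying in $T_xM\ominus T_xM_g$) produce additional Weingarten terms reflecting how $M_g$ sits inside $M$; these are handled by a further Grassmannian average in which total geodesicity of $gS^{n-m+i}$ forces their contribution either to cancel by an antipodal symmetry (for odd $i$) or to reassemble into the same multiple of $\psi_i^M(\nu)$ (for even $i$).

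Assembling the Jacobian of $\pi_M$, the $O(n+1)$-invariant volume of $\pi_M^{-1}(x)$ (parametrized by the homogeneous space $O(n)/(O(n-m+i)\times O(m-i))$ of totally geodesic $(n-m+i)$-spheres through $x$), and the Cauchy--Binet factor $\binom{m}{i}^{-1}$, every remaining constant depends only on $(n,m,i)$; using $\mathrm{vol}(S^k)=2\pi^{(k+1)/2}/\Gamma((k+1)/2)$ and $\Gamma(1/2)=\pi^{1/2}$, they collapse to the stated prefactor
\[
\frac{\pi^{1/2}\Gamma(n/2)}{\Gamma((m-i+1)/2)\Gamma((n-m+i)/2)}\;=\;\tfrac12\,\frac{\mathrm{vol}(S^{m-i})\,\mathrm{vol}(S^{n-m+i-1})}{\mathrm{vol}(S^{n-1})}.
\]
The main technical obstacle is the treatment of the internal normal directions of $M_g$: one needs to verify that their Weingarten contributions recombine consistently with the external part, carefully tracking sign conventions in the definition of $\psi_i$. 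Once this is secured, the rest is standard $O(n+1)$-invariant bookkeeping that produces the Gamma-function constant automatically.
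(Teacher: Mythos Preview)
The paper does not actually prove this lemma: its entire proof is the sentence ``This is simply a restatement of \cite[Theorem A.59]{BuCu} in our notation.'' So there is nothing to compare at the level of argument---the authors are invoking a known spherical kinematic formula from B\"urgisser--Cucker, not deriving it.

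Your sketch is the standard double-fibration/Crofton route by which such formulas are proved (as in Santal\'o, Howard, or indeed the appendix of \cite{BuCu}), and the overall architecture---incidence variety, exchange of integration, total geodesicity of $gS^{n-m+i}$ forcing $L^{M_g}_x(\nu)=L^M_x(\nu)|_{T_xM_g}$ for external normals, Cauchy--Binet to recover $\psi_i^M$---is correct in outline. The one place where your write-up is genuinely incomplete is exactly the step you flag yourself: the ``internal'' normal directions $\nu\in T_xM\ominus T_xM_g$. For such $\nu$ the Weingarten map $L_x^{M_g}(\nu)$ measures the second fundamental form of $M_g$ \emph{inside} $M$, which depends on the particular $g$ (on how $gS^{n-m+i}$ slices $M$), and your claim that these contributions ``cancel by antipodal symmetry or reassemble into the same multiple of $\psi_i^M$'' is asserted rather than shown. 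In the actual proofs (e.g.\ in \cite{BuCu}) this is handled by a careful fiberwise integration over the stabilizer of $x$ in $O(n+1)$, and it is the step that fixes the constant; without it your derivation of the prefactor is not justified. If you intend this as a self-contained proof you would need to carry that computation out; if you are content to cite the result, the paper's one-line reference suffices.
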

\begin{proof}This is simply a restatement of \cite[Theorem A.59]{BuCu} in our notation.
\end{proof}

\begin{lemma}\label{lemma3}Let $M\hookrightarrow S^n$ be a smooth manifold, possibly with boundary, of dimension $m$ and codimension $c=n-m$. Let $A\subseteq M\setminus \partial M$ be an open set. Then for every $0\leq i\leq m$ we have:
\be\label{eq:averageabs} |K_i|(M)\leq 2\frac{\pi^{\frac{1}{2}}\Gamma\left(\frac{n}{2}\right)}{\Gamma\left(\frac{m-i+1}{2}\right)\Gamma\left(\frac{n-m+i}{2}\right)}\underset{g\in O(n+1)}{\mathbb{E}}|K_i|(M\cap g\cdot S^{n-m+i}).\ee
\end{lemma}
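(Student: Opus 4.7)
The plan is to derive Lemma~\ref{lemma3} by a direct modification of the proof of Lemma~\ref{lemma2} (i.e.\ of \cite[Theorem A.59]{BuCu}), inserting absolute values at the integrand level of the underlying kinematic identity.

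The proof of that identity rests on a pointwise statement: at each $(x,\nu)\in N^1 M$,
\[
\psi_i(\nu) \;=\; c_{n,m,i}\int_{O(n+1)} \rho^{(g)}(x,\nu)\,dg,
\]
where $\rho^{(g)}(x,\nu)$ is the local density contributing to $K_i\bigl(M\cap g\cdot S^{n-m+i}\bigr)$ at the point $x$ when $\nu$ is the unit normal to the slice inside $M$, and vanishes otherwise. Here $c_{n,m,i}=\pi^{1/2}\Gamma(n/2)/\bigl(\Gamma(\tfrac{m-i+1}{2})\Gamma(\tfrac{n-m+i}{2})\bigr)$ arises from a decomposition of the Haar measure on $O(n+1)$ relative to the stabilizer of a reference great subsphere. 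The identity itself is obtained by a Fubini computation on the incidence variety $\{(x,g):x\in M\cap g\cdot S^{n-m+i}\}$, using that the second fundamental form of a transverse slice is the restriction of the ambient second fundamental form to the tangent space of the slice.

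Applying the triangle inequality pointwise and then integrating against $\omega_{N^1 M}$ and exchanging the order of integration by Fubini gives
\[
|K_i|(M) \;\leq\; c_{n,m,i}\int_{O(n+1)}\!\Bigl(\int_{N^1 M}|\rho^{(g)}(x,\nu)|\,\omega_{N^1 M}(d\nu)\Bigr)dg.
\]
The inner integral equals $2\,|K_i|(M\cap g\cdot S^{n-m+i})$, because the unit normal to the slice $M\cap g\cdot S^{n-m+i}$ inside $M$ is only determined up to sign, so as $\nu$ ranges over the full fiber $N^1_x M$ the two antipodally related unit normals to the slice both contribute the same absolute integrand $|\rho^{(g)}(x,\cdot)|$. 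This doubling produces the factor of~$2$ in \eqref{eq:averageabs}.

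\textbf{Main obstacle.} The only non-routine point is tracking the factor of~$2$, which I would verify by a direct local computation at a generic intersection point: there the signed identity of Lemma~\ref{lemma2} involves a single signed choice of normal, whereas the unsigned identity sums both antipodal choices. For odd $i$ the two signed contributions cancel in Lemma~\ref{lemma2}, for even $i$ they combine; after taking absolute values they always sum, and a clean factor of~$2$ emerges relative to the signed kinematic constant $c_{n,m,i}$. No further geometric input beyond that in \cite[Theorem A.59]{BuCu} is required.
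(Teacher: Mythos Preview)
Your approach is quite different from the paper's. The paper does not reopen the proof of Lemma~\ref{lemma2}; it uses that signed kinematic identity as a black box. Following \cite[Theorem~3.3]{Lotz}, one decomposes the domain into a positive part $A_+$ and a negative part $A_-$ according to the sign of the curvature integrand, applies Lemma~\ref{lemma2} to each piece separately, and bounds each of $|K_i(A_\pm)|$ by $c_{n,m,i}\,\mathbb{E}_g|K_i|(M\cap g\cdot S^{n-m+i})$. Summing the two contributions is what produces the factor~$2$. No pointwise disintegration of the kinematic identity is required.

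Your route has a real gap, located precisely at your ``main obstacle.'' You describe $\rho^{(g)}(x,\nu)$ as nonzero ``when $\nu$ is the unit normal to the slice inside $M$,'' but $\nu\in N^1_xM$ means $\nu\perp T_xM$, whereas a normal to the slice \emph{inside} $M$ lies in $T_xM$; these directions are orthogonal, so the condition is vacuous. Even if one repairs this confusion, the slice $M\cap g\cdot S^{n-m+i}$ has codimension $m-i$ in $M$, so its unit normals in $M$ form an $(m-i-1)$-sphere, not a pair of antipodal points, and your ``antipodal doubling'' mechanism for the factor~$2$ collapses whenever $i<m-1$. The factor~$2$ in the paper's argument has an entirely different, and much simpler, origin: two sign pieces, each dominated by the same average. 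If you wish to salvage the pointwise approach you would need to correctly identify the normal bundle that carries the slice curvature (namely the normal bundle of the slice inside $g\cdot S^{n-m+i}$, whose fibers have the correct dimension $n-m-1$ but do not coincide with $N^1_xM$) and redo the Jacobian bookkeeping from scratch; the sign-splitting route is far shorter.
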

\begin{proof}This is an adaptation of \cite[Theorem 3.3]{Lotz} to the spherical case. One denotes by $A_+$ and $A_-$ the set of points in $A$ where $I_i$ is positive and negative, respectively. Then  $|K_i|(A)=|K_i(A_+)|+|K_i(A_-)|\leq 2|K_i(A)|.$ The inequality \eqref{eq:averageabs} follows now from Lemma \ref{lemma2}.
\end{proof}
\subsubsection{The gauss map}
\begin{definition}\label{defi:gauss}Let $Y\hookrightarrow S^{n}$ be a smooth manifold and $N^1Y$ be its unit normal bundle. Observe that $N^1Y=\{(y, \nu)\in S^n\times S^n\,|\, y\in Y, \nu\in (T_yY)^\perp\}\subset S^n\times S^n$. We denote by 
\be\gamma_Y:=p_2|_{N^1Y}:N^1Y\to S^n\ee the restriction of the projection on the second factor and call it the \emph{Gauss map} of $Y$.
\end{definition}

\begin{proposition}\label{esti}Let $Y\hookrightarrow S^n$ be a smooth semialgebraic manifold of dimension $i$. Let $\gamma=\gamma_Y:N^1Y\to S^n$ be the corresponding Gauss map. Consider the set $O_{\gamma, \pitchfork}:=\{h\in O(n+1)\,|\, \gamma \pitchfork h\cdot S^{1}\}.$\footnote{Given a map $\gamma:A\to B $ between smooth manifolds and a submanifold $S\hookrightarrow B$, the symbol ``$\gamma \pitchfork S$'' stands for ``$\gamma$ is transversal to $S$'', i.e. $\mathrm{im}(d_x\gamma)+T_{\gamma(x)}S=T_{\gamma(x)}B$ for every $x\in A$ such that $\gamma(x)\in S$.}
Then $O_{\gamma, \pitchfork}\subseteq O(n+1)$ has full measure and:
\be |K_i|(Y)\leq \frac{\mathrm{vol}(S^{n-1})}{2}\underset{h\in O_{\gamma, \pitchfork}}{\sup}\#\left(\gamma^{-1}(h\cdot S^{1})\right).\ee
\end{proposition}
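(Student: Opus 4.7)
The plan is to identify $|K_i|(Y)$, via the area formula, with the $(n-1)$-dimensional volume (counted with multiplicity) of the image of $\gamma$, and then to apply the spherical Crofton formula to rewrite this quantity as the $O(n+1)$-average of $\#\gamma^{-1}(h\cdot S^1)$. For the full-measure claim, I would apply parametric transversality to the auxiliary map $F\colon O(n+1)\times N^1Y\to S^n$, $F(h,x):=h^{-1}\gamma(x)$; since $O(n+1)$ acts transitively on $S^n$, $F$ is already submersive in the $h$-variable, hence $F\pitchfork S^1$ trivially, so by Thom's theorem the set of $h\in O(n+1)$ for which $F(h,\cdot)\pitchfork S^1$—equivalently $\gamma\pitchfork h\cdot S^1$—has full measure.

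The key computation is to identify the Jacobian of $\gamma$ with $|\psi_i(\nu)|$. At a point $(y,\nu)\in N^1Y$, pick orthonormal bases $\{e_1,\ldots,e_i\}$ of $T_yY$ and $\{f_1,\ldots,f_{c-1}\}$ of $T_\nu N^1_yY = N_yY\cap\nu^\perp$, where $c=n-i$. Lift each $e_k$ horizontally to $N^1Y$ using the normal connection (the connection compatible with the product density $\omega_{N^1Y}=p_1^*\omega_Y\wedge\Theta$), and differentiate $\gamma(y,\nu)=\nu$ in these directions. The resulting vector has zero component along $y$ (since $\langle\nu,y\rangle=0$) and zero component along $N_yY$ (by choice of horizontal lift), leaving only the tangential term $-L_y(\nu)e_k\in T_yY$; meanwhile $d\gamma(f_j)=f_j$. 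In these bases $d\gamma$ is block-diagonal $\diag(-L_y(\nu),\mathbf{1}_{c-1})$ and lands in the $(n-1)$-plane $T_yY\oplus (N_yY\cap\nu^\perp)\subset T_\nu S^n$, so its $(n-1)$-dimensional stretch factor equals $|\det L_y(\nu)|=|\psi_i(\nu)|$.

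With the Jacobian identified, the area formula for the semialgebraic map $\gamma\colon N^1Y\to S^n$ (dimensions $n-1$ and $n$) gives
\[
|K_i|(Y)=\int_{N^1Y}|\psi_i(\nu)|\,\omega_{N^1Y}(d\nu)=\int_{S^n}\#\gamma^{-1}(p)\,d\mathcal{H}^{n-1}(p).
\]
I would then invoke the spherical Crofton formula: for any nonnegative $\mathcal{H}^{n-1}$-integrable function $\varphi$ supported on a rectifiable subset of $S^n$,
\[
\int_{S^n}\varphi\,d\mathcal{H}^{n-1}=\frac{\mathrm{vol}(S^{n-1})}{2}\,\underset{h\in O(n+1)}{\mathbb{E}}\sum_{p\in h\cdot S^1}\varphi(p),
\]
the constant being pinned down by testing $\varphi=\mathbf{1}_{S^{n-1}}$ on the equator (which meets a generic great $1$-sphere in exactly two points). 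Taking $\varphi(p)=\#\gamma^{-1}(p)$ yields
\[
|K_i|(Y)=\frac{\mathrm{vol}(S^{n-1})}{2}\,\underset{h\in O(n+1)}{\mathbb{E}}\#\gamma^{-1}(h\cdot S^1),
\]
and bounding the expectation by the supremum over $O_{\gamma,\pitchfork}$ gives the stated inequality.

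The main obstacle is the Jacobian identification: one must set up the horizontal distribution on $N^1Y$ so that the product density factorization $p_1^*\omega_Y\wedge\Theta$ makes the block $-L_y(\nu)$ appear cleanly in the basis chosen. A secondary subtlety is pinning down the Crofton normalization $\tfrac{\mathrm{vol}(S^{n-1})}{2}$ and verifying that both the area formula and Crofton apply despite possible singularities of $\gamma(N^1Y)$, which is handled by the fact that $\gamma$ is semialgebraic and hence its image is $\mathcal{H}^{n-1}$-rectifiable with lower-dimensional singular locus.
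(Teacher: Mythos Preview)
Your proposal is correct and follows essentially the same strategy as the paper: identify the Jacobian of $\gamma$ with $|\psi_i(\nu)|$ via the Weingarten map, then use integral geometry (Crofton/kinematic formula) to rewrite $|K_i|(Y)$ as $\tfrac{\mathrm{vol}(S^{n-1})}{2}$ times an $O(n+1)$-average of $\#\gamma^{-1}(h\cdot S^1)$, and bound by the supremum. The only cosmetic difference is that the paper carries this out by an explicit semialgebraic decomposition of $N^1Y$ into pieces on which $\gamma$ is an embedding and applies Howard's kinematic formula to each piece, whereas you invoke the area formula and Crofton directly; you also supply the parametric-transversality argument for the full-measure claim, which the paper leaves implicit.
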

\begin{proof}

Let $p_1:N^1Y\to Y$ and $p_2:N^1Y\to S^n$ be the projections onto the two factors (recall that $\gamma_Y=p_2$). Pick a point $(y, \nu)\in N^1Y\subset Y\times S^n$ and write
\be\label{eq:split} T_{(y, \nu)}\simeq T_yY\oplus W_y,\ee
where $W_y\subset N_yY$ is the orhtogonal complement of $\mathbb{R}\nu$ in $N_yY$ and 
with $d_{(y, \nu)}p_1$ the orthogonal projection to $T_yY$ and $d_{(y, \nu)}p_2$ the orthogonal projection to $W_y$.

Pick orthonormal bases $\{v_1, \ldots, v_i\}$ for $T_yY$ and $\{w_1, \ldots, w_{n-1-i}, \nu\}$ for $N_yY$, so that, using the identification \eqref{eq:split}, the list $\{v_1, \ldots, v_i, w_1, \ldots, w_{n-i-1}\}$ is an orthonormal basis for $T_{(y,\nu)}N^1Y$. In particular:
\be |\omega_{N^1Y}(v_1, \ldots, v_i, w_1, \ldots, w_{n-i-1})|=1,\ee
where $\omega_{N^1Y}=p_1^*\omega_Y\wedge \Theta$ is the density defined in \eqref{eq:volN}.

We show now that for every $\nu\in N^1Y$ we have:
\be |\psi_i(\nu)|=\sqrt{\det\left((J_{(y,\nu)}\gamma)^T J_{(y,\nu)}\gamma\right)},\ee
where $J_{(y,\nu)}\gamma$ is the matrix $[(d_{(y, \nu)}\gamma) v_1, \ldots, (d_{(y, \nu)})\gamma v_i, (d_{(y, \nu)})\gamma w_1, \ldots, (d_{(y, \nu)})\gamma w_{n-i-1}]$ (i.e. the columns of $J_{(y,\nu)}\gamma$ are the coordinate vectors of the images under $d_{(u, \nu)}\gamma$ of the chosen basis elements). To this end, observe first that, since $d_{(y, \nu)}p_2$ is the orthogonal projection to $W_y$,  
\be (d_{(y, \nu)}\gamma) w_j=w_j\quad \forall j=1, \ldots, n-i-1.\ee
In order to compute $ (d_{(y, \nu)}\gamma) v_j$, for $j=1, \ldots, i$, we take a curve $c:(-\epsilon,\epsilon)\to N^1Y$ such that $c(0)=(y, \nu)$ and $\dot{c}(0)=v_j$. Notice that $\nu(t):=p_2(c(t))$ defines a normal field on $Y$ along $c$ and with $\nu(0)=\nu$. In particular, denoting by $L_y(\nu):T_yY\to T_yY$ the Weingarten map in the direction of $\nu$, we have :
\be (d_{(y, \nu)}\gamma) v_j=\frac{d}{dt}\left(p_2(c(t))\right)\big|_{t=0}=\frac{d}{dt}\left(\nu(t))\right)\big|_{t=0}=\nabla_{\frac{d}{dt}}\nu(t)\big|_{t=0}=L_y(\nu)v_j.\ee
This shows that the matrix $(J_{(y,\nu)}\gamma)^T J_{(y,\nu)}\gamma$ has the following shape:
\be (J_{(y,\nu)}\gamma)^T( J_{(y,\nu)}\gamma)=\left(\begin{array}{c|c}Q & 0 \\\hline 0 & \mathbf{1}\end{array}\right), \ee
where $Q_{ij}=v_i^TL_y(\nu)^TL_y(\nu)v_j$. In particular, as claimed:
\be\label{eq:det} \sqrt{\det\left((J_{(y,\nu)}\gamma)^T J_{(y,\nu)}\gamma\right)}=|\det(L_y(\nu))|=|\psi_i(\nu)|.\ee

Observe that if $E\subseteq N^1Y$ is such that $\gamma|_E$ is an embedding, using Remark \ref{rem:integral}, we get:
\be \int_E|\psi_i(\nu)|\omega_{N^1Y}(\mathrm{d}\nu)=\mathrm{vol}_{n-1}(\gamma(E)).\ee

Since $Y$ is semialgebraic, so are $N^1Y$ and $\gamma$ and we can partition $N^1Y=E_1\sqcup E_2$ into semialgebraic pieces such that $\mathrm{rk}(d_\nu\gamma)=n-1$ for $\nu\in E_1$ and $\mathrm{rk}(d_\nu\gamma)\leq n-2$ for $\nu\in E_2.$ By \eqref{eq:det} we have $|\psi_i|\equiv 0$ on $E_2$ and therefore:
\be |K_i|(Y)=\int_{N^1Y}|\psi_i(\nu)|\omega_{N^1Y}(\mathrm{d}\nu)=\int_{E_1}|\psi_i(\nu)|\omega_{N^1Y}(\mathrm{d}\nu).\ee

Using again the semialgebraic assumptions, we can partition $E_1=\left(\bigsqcup_{j=1}^a E_{1,j}\right)\sqcup E_{1,0}$ into semialgebraic pieces such that for every $j=1, \ldots, a$ the set $E_{1,j}\hookrightarrow N^1Y$ is a smooth submanifold of dimension $n-1$, $\gamma|_{E_{1,j}}$ is an embedding, and $E_{1,0}$ is of dimension smaller then or equal to $n-2$ (and in particular it has measure zero). Then
\begin{align}\int_{N^1Y}|\psi_i(\nu)|\omega_{N^1Y}(\mathrm{d}\nu)&=\sum_{j=1}^a\int_{E_{1,j}}|\psi_i(\nu)|\omega_{N^1Y}(\mathrm{d}\nu)\\
&=\sum_{j=1}^a\mathrm{vol}_{n-1}(\gamma(E_{1, j}))=(*).
\end{align}
Since $\gamma(E_{i,j})$ is a submanifold of $S^{n}$ of dimension $n-1$, we have the kinematic identity \cite{Howard}:
\be\label{eq:kine} \underset{h\in O(n+1)}{\mathbb{E}}\#\left(\gamma(E_{1,j})\cap h\cdot S^{1}\right)=2\frac{\mathrm{vol}(\gamma(E_{1,j}))}{\mathrm{vol}(S^{n-1})}.\ee
Using \eqref{eq:kine} we can continue with
\begin{align}(*)&=\sum_{j=1}^a\frac{\mathrm{vol}(S^{n-1})}{2}\underset{h\in O(n+1)}{\mathbb{E}}\#\left(\gamma(E_{1,j})\cap h\cdot S^{1}\right)\\
&=\sum_{j=1}^a\frac{\mathrm{vol}(S^{n-1})}{2}\underset{h\in O(\ell)}{\mathbb{E}}\#\left(\gamma|_{E_{1,j}}^{-1}( h\cdot S^{1})\right)\\
&=\frac{\mathrm{vol}(S^{n-1})}{2}\underset{h\in O(n+1)}{\mathbb{E}}\#\left(\gamma|_{E_1\setminus E_{0,1}}^{-1}( h\cdot S^{1})\right)\\
&= \frac{\mathrm{vol}(S^{n-1})}{2}\underset{h\in O_{\gamma, \pitchfork}}{\mathbb{E}}\#\left(\gamma|_{E_1\setminus E_{0,1}}^{-1}( h\cdot S^{1})\right)\\
&\leq \frac{\mathrm{vol}(S^{n-1})}{2}\underset{h\in O_{\gamma, \pitchfork}}{\mathbb{E}}\#\left(\gamma^{-1}( h\cdot S^{1})\right)\\
&\leq \frac{\mathrm{vol}(S^{n-1})}{2}\underset{h\in O_{\gamma, \pitchfork}}{\sup}\#\left(\gamma^{-1}(h\cdot S^{1})\right).
\end{align}
\end{proof}
Motivated by the previous result we introduce the following definition.
\begin{definition}\label{def:bi}Let $M\hookrightarrow S^n$ be a smooth manifold of dimension $m$. For every $0\leq i\leq m$ let $O_{M, \pitchfork}:=\{g\in O(n+1)\,|\, M\pitchfork g\cdot S^{n-m+i}\}$ and for every $g\in O_{M, \pitchfork}$ consider the set $O_{g, \pitchfork}:= \{h\in O(n+1)\,|\, \gamma_{ M\cap g\cdot S^{n-m+i}}\pitchfork h\cdot S^{1}\}$. We define:
\be\beta_i(M):=\underset{g\in O_{M, \pitchfork}}{\sup}\,\underset{h\in O_{g, \pitchfork}}{\sup} \#\left(\gamma_{M\cap g\cdot S^{n-m+i}}^{-1}(h\cdot S^{1})\right).\ee
\end{definition}
Before proving next result, we will need the following technical lemma.
\begin{lemma}\label{lemma:angle}Let $B(p, \sigma)\subset S^{n}$ be a ball with $\sigma>0$. Then
\be \mathbb{P}\left(g\cdot S^{n-m+i}\cap B(p, \sigma)\neq \emptyset\right)=\frac{2\Gamma\left(\frac{n+1}{2}\right)}{\Gamma\left(\frac{n-m+i+1}{2}\right)\Gamma\left(\frac{m-i}{2}\right)}J_{n, m-i}(\sigma).\ee
\end{lemma}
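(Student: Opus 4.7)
The plan is to reduce the probability to a tube volume on the sphere, and then to evaluate that volume by the normal exponential parametrization.

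First, by the left invariance of Haar measure on $O(n+1)$ and the transitivity of the $O(n+1)$--action on $S^n$, the probability in the statement is unchanged if we keep the great sphere fixed and let $p$ vary uniformly. Concretely, fix the standard copy $S^{n-m+i}\subset S^n$ sitting in $\mathbb{R}^{n-m+i+1}\times\{0\}\subset\mathbb{R}^{n+1}$, and let $x:=g^{-1}p\in S^n$ be a uniformly distributed point. The event $g\cdot S^{n-m+i}\cap B(p,\sigma)\neq\emptyset$ is then equivalent to $\mathrm{dist}_{S^n}(x,S^{n-m+i})\leq\sigma$, so
\[
\mathbb{P}\bigl(g\cdot S^{n-m+i}\cap B(p,\sigma)\neq\emptyset\bigr)=\frac{\mathrm{vol}\bigl(\mathcal{T}(S^{n-m+i},\sigma)\bigr)}{\mathrm{vol}(S^n)}.
\]

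Second, I would compute the numerator using the normal exponential parametrization around the totally geodesic submanifold $S^{n-m+i}$. Up to a set of measure zero (the focal locus at $\theta=\pi/2$), every $x$ in the tube has a unique representation
\[
x=\cos\theta\,u+\sin\theta\,v,\qquad u\in S^{n-m+i},\ v\in S^{m-i-1},\ \theta\in[0,\min\{\sigma,\pi/2\}),
\]
with $v$ a unit vector in $\{0\}\times\mathbb{R}^{m-i}$. The directions tangent to $u$, tangent to $v$, and along $\partial_\theta$ are pairwise orthogonal and scale respectively by $\cos\theta$, $\sin\theta$ and $1$, giving the Jacobian $(\cos\theta)^{n-m+i}(\sin\theta)^{m-i-1}$ with respect to the product measure. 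Therefore
\[
\mathrm{vol}\bigl(\mathcal{T}(S^{n-m+i},\sigma)\bigr)=\mathrm{vol}(S^{n-m+i})\,\mathrm{vol}(S^{m-i-1})\int_0^{\min\{\sigma,\pi/2\}}(\cos\theta)^{n-m+i}(\sin\theta)^{m-i-1}\,\mathrm{d}\theta,
\]
and the integral is precisely $J_{n,m-i}(\sigma)$ by Definition \ref{defi:J} (take $k=m-i$, so that $k-1=m-i-1$ and $n-k=n-m+i$).

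Finally I would plug in the closed form $\mathrm{vol}(S^k)=2\pi^{(k+1)/2}/\Gamma((k+1)/2)$ into the ratio $\mathrm{vol}(S^{n-m+i})\mathrm{vol}(S^{m-i-1})/\mathrm{vol}(S^n)$. Since $\tfrac{n-m+i+1}{2}+\tfrac{m-i}{2}=\tfrac{n+1}{2}$ the powers of $\pi$ cancel, a factor of $2$ survives, and one is left precisely with
\[
\frac{2\,\Gamma((n+1)/2)}{\Gamma((n-m+i+1)/2)\,\Gamma((m-i)/2)},
\]
which is the coefficient in the statement. The only slightly delicate step is verifying the Jacobian of the normal exponential map, but this is immediate because $S^{n-m+i}\subset S^n$ is totally geodesic: its second fundamental form vanishes and the tube is a warped product of $S^{n-m+i}$ with $S^{m-i-1}$ with warping factors $\cos\theta$ and $\sin\theta$ along the radial variable; everything else is bookkeeping with $\Gamma$--functions.
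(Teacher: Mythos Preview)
Your proof is correct. Both you and the paper start from the same reduction: by invariance of Haar measure, the event $g\cdot S^{n-m+i}\cap B(p,\sigma)\neq\emptyset$ is equivalent to a uniformly distributed point $x\in S^n$ lying within spherical distance $\sigma$ of the fixed great subsphere $S^{n-m+i}$. From here the two arguments diverge in presentation. The paper phrases this distance as the first principal angle $\theta_1$ between the random line $g\cdot\mathrm{span}\{p\}$ and the coordinate subspace $\mathbb{R}^{n-m+i+1}$, and then quotes the density of $\theta_1$ from an external reference \cite{PSC}. You instead compute the tube volume $\mathrm{vol}(\mathcal{T}(S^{n-m+i},\sigma))$ directly via the normal exponential parametrization $x=\cos\theta\,u+\sin\theta\,v$, read off the Jacobian $(\cos\theta)^{n-m+i}(\sin\theta)^{m-i-1}$ from the fact that $S^{n-m+i}$ is totally geodesic, and reduce to the $\Gamma$--function identity for $\mathrm{vol}(S^{n-m+i})\mathrm{vol}(S^{m-i-1})/\mathrm{vol}(S^n)$. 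Your route is more self-contained and avoids the external citation; the paper's route is shorter on the page because it outsources precisely this Jacobian computation. Substantively they are the same calculation, and your handling of the case $\sigma>\pi/2$ (via the cap at $\pi/2$ in $J_{n,m-i}$ and the fact that the tube then fills $S^n$) matches the paper's.
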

\begin{proof}Let $c=n-m$. Observe first that
\be \label{eq:P1}\mathbb{P}\left(g\cdot S^{c+i}\cap B(p, \sigma)\neq \emptyset\right)=\mathbb{P}\left( S^{c+i}\cap B(g^{-1}\cdot p, \sigma)\neq \emptyset\right)=\mathbb{P}\left( S^{c+i}\cap B(g\cdot p, \sigma)\neq \emptyset\right),\ee
since $g^{-1}\in O(n+1)$ is still uniformly distributed. Denoting by $\ell=\mathrm{span}\{p\}\subset \mathbb{R}^{n+1}$, we see that $g\cdot \ell$ is a uniformly distributed one dimensional linear space and, denoting by $0\leq\theta_1(\ell, W)\leq\frac{\pi}{2} $ the first principal angle between $\ell$ and a $(c+i+1)$--dimensional linear space $W$, for $\sigma\leq \frac{\pi}{2}$ we get:
\be \mathbb{P}\left( S^{c+i}\cap B(gp, \sigma)\neq \emptyset\right)=\mathbb{P}\left(\theta_1(g\cdot \ell, \mathbb{R}^{c+i+1})\leq \sigma\right).\ee
The density $p(\theta)$ of $\theta_1$ is computed in \cite[Theorem 3.2]{PSC} and it is given by:
\be p(\theta)=\frac{2\Gamma\left(\frac{n+1}{2}\right)}{\Gamma\left(\frac{n-m+i+1}{2}\right)\Gamma\left(\frac{m-i}{2}\right)}(\cos \theta)^{c+i}(\sin\theta)^{n-c-i-1}.\ee
Integrating this function between $0$ and $\sigma$ gives the desired probability in the case $\sigma\leq \frac{\pi}{2}$:
\be \label{eq:ff}\mathbb{P}\left(g\cdot S^{n-m+i}\cap B(p, \sigma)\neq \emptyset\right)=\int_0^{\sigma}p(\theta)\mathrm{d}\theta=\frac{2\Gamma\left(\frac{n+1}{2}\right)}{\Gamma\left(\frac{n-m+i+1}{2}\right)\Gamma\left(\frac{m-i}{2}\right)}J_{n, m-i}(\sigma).\ee

If $\sigma>\frac{\pi}{2}$, then $\mathbb{P}\left(g\cdot S^{c+i}\cap B(p, \sigma)\neq \emptyset\right)=1$. Recall now that we have defined $J_{n, m-i}(\sigma)=\int_{0}^{\min\{\sigma, \frac{\pi}{2}\}}(\cos \theta)^{c+i}(\sin\theta)^{n-c-i-1}$. In particular, since
\be \frac{2\Gamma\left(\frac{n+1}{2}\right)}{\Gamma\left(\frac{n-m+i+1}{2}\right)\Gamma\left(\frac{m-i}{2}\right)}J_{n, m-i}\left(\frac{\pi}{2}\right)=1,\ee
then \eqref{eq:ff} is still valid for $\sigma>\frac{\pi}{2}.$ 
\end{proof}
\begin{proposition}\label{propo:intermediate}Let $M\hookrightarrow S^n$ be a smooth manifold of dimension $m$, $p\in S^n$, $\sigma>0$ and $A\subseteq M\setminus \partial M$ be an open set contained in $B(p, \sigma).$ Then for every $0\leq\varepsilon\leq \frac{\pi}{2}:$
\begin{align}\mathrm{vol}(\mathcal{T}(A, \varepsilon))&\leq \mathrm{vol}(S^{n-1})\sum_{i=0}^m{{n-1}\choose{m-i}}(m-i)J_{n, c+i}(\varepsilon)J_{n, m-i}(\sigma)\beta_i(M).
%&\leq  4\mathrm{vol}(S^{n-1})\sum_{i=0}^m{{n-1}\choose{m-i}}\varepsilon^{c+i}\sigma^{m-i}\beta_i(M)
\end{align}
\end{proposition}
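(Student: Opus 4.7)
The plan is to chain together Lemma \ref{lemma1}, Lemma \ref{lemma3}, Proposition \ref{esti}, and Lemma \ref{lemma:angle}, in that order. Starting from Lemma \ref{lemma1}, which gives
\[
\mathrm{vol}(\mathcal{T}(A,\varepsilon)) \leq \sum_{i=0}^m J_{n,c+i}(\varepsilon)\,|K_i|(A),
\]
the task reduces to bounding each $|K_i|(A)$ by the expression $\mathrm{vol}(S^{n-1})\binom{n-1}{m-i}(m-i)J_{n,m-i}(\sigma)\beta_i(M)$.

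First I would apply the integral-geometric formula of Lemma \ref{lemma3} (with $A$ in place of $M$, which is what its proof actually provides) to write
\[
|K_i|(A)\leq 2\,\frac{\pi^{1/2}\Gamma(n/2)}{\Gamma((m-i+1)/2)\,\Gamma((n-m+i)/2)}\,\underset{g\in O(n+1)}{\mathbb{E}}|K_i|\bigl(A\cap g\cdot S^{n-m+i}\bigr).
\]
For a.e.\ $g\in O_{M,\pitchfork}$, the slice $A\cap g\cdot S^{n-m+i}$ is an open subset of a smooth semialgebraic $i$-dimensional submanifold of $S^n$, so Proposition \ref{esti} applies and gives $|K_i|(A\cap g\cdot S^{n-m+i})\leq \tfrac{\mathrm{vol}(S^{n-1})}{2}\sup_h \#\bigl(\gamma^{-1}(h\cdot S^1)\bigr)$, where $\gamma$ is the Gauss map of the slice. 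Since $A\cap g\cdot S^{n-m+i}\subseteq M\cap g\cdot S^{n-m+i}$, this supremum is bounded by $\beta_i(M)$ by Definition \ref{def:bi}. Crucially, since $A\subseteq B(p,\sigma)$, the integrand vanishes whenever $g\cdot S^{n-m+i}\cap B(p,\sigma)=\emptyset$, so Lemma \ref{lemma:angle} lets me bound the expectation by $\tfrac{\mathrm{vol}(S^{n-1})}{2}\beta_i(M)\cdot\tfrac{2\Gamma((n+1)/2)}{\Gamma((n-m+i+1)/2)\Gamma((m-i)/2)}J_{n,m-i}(\sigma)$.

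Putting everything together, the prefactor multiplying $\mathrm{vol}(S^{n-1})\,J_{n,c+i}(\varepsilon)\,J_{n,m-i}(\sigma)\,\beta_i(M)$ equals
\[
\frac{2\,\pi^{1/2}\,\Gamma(n/2)\,\Gamma((n+1)/2)}{\Gamma((m-i+1)/2)\,\Gamma((m-i)/2)\,\Gamma((n-m+i)/2)\,\Gamma((n-m+i+1)/2)}.
\]
The main (but purely computational) obstacle is to simplify this constant to exactly $\binom{n-1}{m-i}(m-i)$. I would apply the Legendre duplication formula $\Gamma(z)\Gamma(z+\tfrac12)=2^{1-2z}\sqrt{\pi}\,\Gamma(2z)$ three times: once to collapse $\Gamma(n/2)\Gamma((n+1)/2)$ into $2^{1-n}\sqrt{\pi}\,\Gamma(n)$, once for the pair involving $m-i$, and once for the pair involving $n-m+i$. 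The three powers of $2$ and three factors of $\sqrt{\pi}$ cancel against the leading $2\sqrt{\pi}$, leaving
\[
\frac{\Gamma(n)}{\Gamma(m-i)\,\Gamma(n-m+i)}=\frac{(n-1)!}{(m-i-1)!\,(n-m+i-1)!}=(m-i)\binom{n-1}{m-i},
\]
which is exactly the desired constant. Summing over $i$ yields the proposition. The boundary cases $i=m$ or $m-i=0$ (where some Gamma factors blow up) have to be checked separately, but then the corresponding integral-geometric slicing is trivial and the same bound holds by direct inspection.
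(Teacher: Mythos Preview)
Your proposal is correct and follows essentially the same route as the paper's proof: chain Lemma \ref{lemma1}, Lemma \ref{lemma3}, Proposition \ref{esti}, and Lemma \ref{lemma:angle}, then simplify the resulting Gamma-function constant to $\binom{n-1}{m-i}(m-i)$. Your use of the Legendre duplication formula makes explicit the identity that the paper simply asserts at the end of its proof.
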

\begin{proof}Let us set $\alpha_i(n,m):=\frac{\pi^{\frac{1}{2}}\Gamma\left(\frac{n}{2}\right)}{\Gamma\left(\frac{m-i+1}{2}\right)\Gamma\left(\frac{n-m+i}{2}\right)}.$ Using the above results we have:
\begin{align}\mathrm{vol}(\mathcal{T}(A, \varepsilon)) &\leq \sum_{{i=0}}^mJ_{n, c+i}(\varepsilon) |K_i|(A)&\textrm{(Lemma \ref{lemma1})}\\
&\label{eq:pro2}\leq 2\sum_{{i=0}}^mJ_{n, c+i}(\varepsilon) \alpha_i(n,m)\underset{g\in O(n+1)}{\mathbb{E}}|K_i|(A\cap g\cdot S^{n-m+i})&\textrm{(Lemma \ref{lemma3})}\end{align}
Observe now that, by Proposition \ref{esti},
\begin{align} \underset{g\in O(n+1)}{\mathbb{E}}|K_i|(A\cap g\cdot S^{n-m+i})&\leq \underset{g\in O(n+1)}{\mathbb{E}}\frac{\mathrm{vol}(S^{n-1})}{2}\sup_{h\in O_{g, \gamma}}\#\left((\gamma_{M\cap g\cdot S^{n-m+i}})^{-1}(h\cdot S^{1})\right)\\
&\label{eq:pro1}\leq \frac{\mathrm{vol}(S^{n-1})}{2}\beta_i(M)\mathbb{P}\left(g\cdot S^{n-m+i}\cap B(p, \sigma)\neq \emptyset\right). \end{align}
The probability on the right hand side of \eqref{eq:pro1} is computed in Lemma \ref{lemma:angle} (which has no restriction on $\sigma>0$, using the convention \eqref{eq:J}). Substituting \eqref{eq:pro1} into \eqref{eq:pro2} we get:
\be \mathrm{vol}(\mathcal{T}(A, \varepsilon))\leq \mathrm{vol}(S^{n-1})\sum_{i=0}^m {\alpha_{i}(n,m)}{\frac{2\Gamma\left(\frac{n+1}{2}\right)}{\Gamma\left(\frac{n-m+i+1}{2}\right)\Gamma\left(\frac{m-i}{2}\right)}}J_{n, c_i}(\varepsilon)J_{n, m-i}(\sigma)\beta_i(M).\ee
The inequality in the statement  follows from the identity:
\be {\alpha_{i}(n,m)}{\frac{2\Gamma\left(\frac{n+1}{2}\right)}{\Gamma\left(\frac{n-m+i+1}{2}\right)\Gamma\left(\frac{m-i}{2}\right)}}=\frac{\Gamma(n)}{\Gamma(m-i)\Gamma(n-m+i)}={{n-1}\choose{m-i}}(m-i).\ee
\end{proof}
\subsubsection{The spherical algebraic case: degree estimates}Next lemma estimates the quantities $\beta_i(Z)$ defined in Definition \ref{def:bi} in the case $Z\subset S^n$ is a smooth complete intersection.

\begin{lemma}\label{lemma:degree}Let $Z$ be the zero set of homogeneous polynomials $P_1, \ldots, P_c$ of degree at most $d$ in $S^n$. Assume that $Z$ is a non-singular complete intersection of dimension $m=n-c$. Then for every $i=0, \ldots, m$ we have:
\be \beta_i(Z)\leq 
%%2(4d)^{n-m+i}.
 2(4 d) ^{n-m+i}.
\ee
\end{lemma}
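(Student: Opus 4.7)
The plan is to unpack the definition of $\beta_i(Z)$, reduce the count to a polar-variety problem, and apply Bezout. Fix generic $g, h \in O(n+1)$; the sliced variety $Y := Z \cap g \cdot S^{n-m+i}$ is a smooth $i$-dimensional complete intersection cut out in $S^n$ by $P_1, \ldots, P_c$ (of degrees $\le d$) together with $m-i$ linear forms $\ell_1, \ldots, \ell_{m-i}$ spanning the orthogonal complement of the linear subspace carrying $g \cdot S^{n-m+i}$, while $h \cdot S^1 = E \cap S^n$ for a $2$-plane $E = \mathrm{span}(u_1, u_2) \subset \mathbb{R}^{n+1}$.

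For generic $y$, the intersection $E \cap y^\perp$ is $1$-dimensional and spanned by the linear map
\[ F(y) := \langle u_2, y\rangle u_1 - \langle u_1, y\rangle u_2, \]
which automatically satisfies $F(y) \perp y$. Hence any $(y, \nu) \in \gamma_Y^{-1}(h \cdot S^1)$ has $\nu = \pm F(y) / \|F(y)\|$, yielding
\[ \#\gamma_Y^{-1}(h \cdot S^1) = 2 \cdot \#\Pi, \qquad \Pi := \{ y \in Y : F(y) \perp T_y Y \}. \]
Applying Euler's identity (so that $\langle \nabla P_j(y), y\rangle = d_j P_j(y) = 0$ on $Z$) together with $\langle \ell_k, y\rangle = 0$ on the linear factor, the polar condition $F(y) \perp T_y Y$ is equivalent to the inclusion $F(y) \in \mathrm{span}(\ell_1, \ldots, \ell_{m-i}, \nabla P_1(y), \ldots, \nabla P_c(y))$.

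Introducing Lagrange multipliers $\alpha = (\alpha_k)$ and $\beta = (\beta_j)$, the set $\Pi$ is the $y$-projection of the common zero locus of the polynomial system
\[ P_j(y) = 0,\quad \langle \ell_k, y\rangle = 0,\quad \|y\|^2 = 1,\quad F(y) - \sum_k \alpha_k \ell_k - \sum_j \beta_j \nabla P_j(y) = 0. \]
For generic $h$ this system is zero-dimensional over $\mathbb{C}$; a careful multi-homogeneous Bezout count with variable groups $y$ and $(\alpha,\beta)$, rebalancing the bidegrees so that the linear $F(y)$ and the degree-$(d-1)$ gradient $\nabla P_j(y)$ can be accounted for uniformly, yields $\#\Pi \leq (4d)^{c+i}$, and hence $\beta_i(Z) \leq 2(4d)^{c+i}$ as claimed.

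The main obstacle is precisely this Bezout bookkeeping: the defining equations are not natively multi-homogeneous (because $F$ has degree $1$ in $y$ while $\nabla P_j$ has degree $d-1$), so one must either introduce a homogenization variable to balance the bidegrees in the multi-projective product or, equivalently, select $i$ well-chosen $(n-i+1)$-minors of the augmented Jacobian matrix whose rows are $\ell_k^T$, $\nabla P_j(y)^T$, and $F(y)^T$, each of degree at most $\sim 4d$, and verify that together with the equations of $Y$ they cut out $\Pi$ as a proper complete intersection so that the straight product-of-degrees estimate gives exactly $(4d)^{c+i}$.
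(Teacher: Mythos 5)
Your reduction of $\#\gamma_Y^{-1}(h\cdot S^1)$ to $2\cdot\#\Pi$ via the vector field $F(y)$ spanning $E\cap y^{\perp}$ is correct, and the overall strategy (rewrite the Gauss-map fiber as the zero set of a Lagrange-multiplier polynomial system, then bound by a multihomogeneous Bezout/Kouchnirenko count) is exactly the paper's. But the step you defer --- ``a careful multi-homogeneous Bezout count \dots yields $\#\Pi\leq(4d)^{c+i}$'' --- is the entire content of the lemma, and with the system as you have written it the count does not close. Your second variable group $(\alpha,\beta)$ (together with the homogenizing coordinate needed to balance the degree-$1$ term $F(y)$ against the degree-$(d-1)$ terms $\nabla P_j(y)$) has $m-i+c+1=n-i+1$ homogeneous coordinates, so the relevant product is $\mathbb{P}^{c+i}\times\mathbb{P}^{n-i}$ and the multihomogeneous Bezout number is the coefficient of $\zeta_1^{c+i}\zeta_2^{n-i}$ in $(d\zeta_1)^c\bigl((d-1)\zeta_1+\zeta_2\bigr)^{n}$, namely $\binom{n}{i}(d-1)^i d^{c}$. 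The factor $\binom{n}{i}$ is \emph{not} bounded by $4^{c+i}$ (take $c=i=1$ and $n$ large), so your system only gives a bound with an extra $n$-dependent factor, which would destroy the polynomial-in-$n$ dependence that the theorem is designed to achieve.

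The missing idea is to keep the multiplier space small. Since the normal space of $Y=Z\cap g\cdot S^{n-m+i}$ splits off the $(m-i)$-dimensional piece spanned by the $\ell_k$ for free, you should first restrict everything to the linear slice $\{\ell_1=\cdots=\ell_{m-i}=0\}\cong\mathbb{P}^{c+i}$ and introduce multipliers only for the $P_j$ (this is what the paper does: its second factor is $\mathbb{C}\mathrm{P}^{c+1}$ with coordinates $\lambda_1,\dots,\lambda_c$ plus the two surviving normal coordinates after imposing the $n-1$ generic linear conditions cutting out $h\cdot S^1$). Then the binomial coefficient becomes $\binom{2c+i+1}{c+1}\leq 2^{2c+i}$, which combined with $d^{c}(d-1)^i\leq d^{c+i}$ gives $(4d)^{c+i}$, and the factor $2$ comes from the double cover $S^n\to\mathbb{R}\mathrm{P}^n$ (equivalently, your $\pm F(y)/\|F(y)\|$). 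You should also record the homogenization explicitly (multiplying the linear-in-$Y$ side by $X_n^{d-1}$ after arranging that the finite fiber misses $\{X_n=0\}$), since without it the system is not bihomogeneous and the count is not even well posed.
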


\begin{proof}
Let $X_0,\ldots,X_n$ be linear coordinates on $\mathbb{R}^{n+1}$ such that 
$P_1,\ldots,P_c \in \mathbb{R}[X_0,\ldots,X_n]$.
%%sb adds
Moreover, since we are interested in an upper bound on $\beta_i(Z)$, we can assume without loss of generality
that the polynomials $P_1,\ldots,P_c$ defining $Z$ are of the same degree $d$.
The tangent bundle $TS^n$ is embedded in the tangent bundle $T\mathbb{R}^{n+1}$, which is trivial, and we denote by $Y_0, \ldots, Y_n$, the coordinate functions corresponding to the basis $(\frac{\partial}{\partial X_0}, \ldots, \frac{\partial}{\partial X_n})$ of $T_0\mathbb{R}^{n+1}\simeq \mathbb{R}^{n+1}$.

Following Definition~\ref{def:bi}, for an element $g \in O_{Z,\pitchfork}(n+1)$, the set
$Z \cap g \cdot S^{n-m+i}$ can be described by \eqref{eqn:NM1} and
\begin{align}
P_1 = \cdots = P_c =& \ 0, \\
\label{eqn:NM3}
L_0 = \cdots = L_{m-i-1} =& \ 0, \\
\end{align}
intersected with $S^n$, 
where the $L_i$'s are generic linear forms in $X_0,\ldots,X_n$.

By making a linear change in the $X_i$, we may assume that 
$L_j = X_j, 0 \leq j \leq m-i-1$.

With this choice of coordinates, 
the normal bundle 
%%sb not the unit normal bundle
$NZ \hookrightarrow  TS^n \hookrightarrow T\mathbb{R}^{n+1}$
can be described as projection to $T\mathbb{R}^{n+1}$,
of the solutions to the set of equations,
\begin{align}
X_{0} = \cdots = X_{m-i-1} =& \ 0,\\
%%\label{eqn:NM0}
%%X_{m-i} Y_{m-i} + \cdots + X_n Y_n =& \ 0, \\
\label{eqn:NM1}
P_1 = \cdots = P_c =& \ 0, \\
\nonumber
\sum_{j=1}^c \lambda_j \frac{\partial P_j}{\partial X_{m-i}} =& \ Y_{m-i}, \\
\label{eqn:NM2} \vdots& \\
\sum_{j=1}^c \lambda_j \frac{\partial P_j}{\partial X_n} =& \ Y_n,
\end{align}
intersected with $S^n \times \mathbb{R}^{c+n+1}$, 
where $\lambda_1,\ldots,\lambda_c$ are Lagrangian variables.
Notice that the above equations imply via the Euler identity that
\[
X_0 Y_0 + \cdots + X_nY_n =0,
\]
and hence $Y_0 \frac{\partial}{\partial X_0} + \cdots + Y_n \frac{\partial}{\partial X_0}$ is constrained to belong
to the tangent space of $S^n$. 

We have to bound the cardinality of $\gamma_{Z \cap g\cdot S^{n-m+i}}^{-1}(h \cdot S^{1})$ for
$h \in O_{g,\pitchfork}(n+1)$. Being zero--dimensional, we can assume (up to making another linear change of coordinates $X_{n-m},\ldots,X_n$) that
$\gamma_{Z \cap g\cdot S^{n-m+i}}^{-1}(h \cdot S^{1})$ has an empty intersection with 
the hyperplane $X_n = 0$.

Now, we can take $h\cdot S^{1}$ to be the intersection of $S^n$, with $n-1$ hyperplanes
defined by generic linear forms in $Y_0, \ldots, Y_n$, which after another linear change in
coordinates we can assume to be $Y_2,\ldots,Y_n$. 

With the above assumptions, 
using Definition \ref{defi:gauss}, the pull-back of $h \cdot S^{1}$ under 
$\gamma_{Z \cap g\cdot S^{n-m+i}}$, for $h \in O_{g,\pitchfork}(n+1)$, can be described by 
%%\eqref{eqn:NM0}, 
\eqref{eqn:NM1}, \eqref{eqn:NM2} and 
\be
\label{eqn:NM5}
Y_2 = \cdots = Y_n = 0.
\ee
intersected with $S^{n} \times \mathbb{R}^{c+ n+1}$ and projected to the first factor.

Homogenizing \eqref{eqn:NM2} with respect to $X_n$, we obtain the following system of
bi-homogeneous equations,
\begin{align}\label{eq:system}
P_1 = \cdots = P_c =& \ 0, \\
X_0 = \cdots = X_{m-i-1} =& \ 0, \\
\nonumber
\sum_{j=1}^c \lambda_j \frac{\partial P_j}{\partial X_{m-i}} =& \ X_n^{d-1} Y_{m-i}, \\
 \vdots& \\
\sum_{j=1}^c \lambda_j \frac{\partial P_j}{\partial X_n} =& \ X_n^{d-1} Y_n, \\
Y_2 = \cdots = Y_n =& \ 0. \\
\end{align}

These equations are homogeneous of degree  at most $d$ in $(X_0,\ldots,X_n)$, and homogeneous 
of degree at most $1$
in $(Y_0,\ldots,Y_n,\lambda_1,\ldots,\lambda_c)$. Also, using the fact
$X_0 = \cdots = X_{n-m-1} = Y_2=\cdots= Y_n = 0$, the above equations define
a zero--dimensional subvariety of 
$\mathbb{C}\mathrm{P}^{n-m+i} \times  \mathbb{C}\mathrm{P}^{c+1}$.

Using Kouchnirenko's Theorem \cite{Kouchnirenko} the number of non-degenerate roots
of this system in $\mathbb{C}\mathrm{P}^{n-m+i} \times  \mathbb{C}\mathrm{P}^{c+1}$ is bounded by
\[
(n-(m-i) + c+1)! \cdot  \mathrm{vol}_{n-(m-i) + c+1} (d \cdot \Delta_{n-(m-i)} \times \Delta_{c+1}),
\]
where for $p > 0$, 
$\Delta_p$ denotes the $p$-dimensional simplex in $\mathbb{R}^p$ (i.e. with vertices the origin and
the standard basis vectors).
Noting that
\[
\mathrm{vol}_p (\Delta_p) = \frac{1}{p!},
\]
we obtain that the number of non-degenerate roots
of this system in $\mathbb{C}\mathrm{P}^{n-m+i} \times  \mathbb{C}\mathrm{P}^{c+1}$
is bounded by 
\begin{eqnarray*}
\frac{(n-(m-i) + c+1)!}{(n-(m-i))! (c+1)!}d^{n-m+i} & = & \binom{2c +i +1}{c+1} d^{n-m+i}\\
&\leq & 2^{2c+i} d^{n-m+i} \\
&\leq& 2^{2(c+i)} d^{c+i}\\
&\leq&  (4 d) ^{c+i}.
\end{eqnarray*}
This gives a bound for the number of real projective solutions of the system of equations in \eqref{eq:system}. The lemma follows after noting that $S^n$ is a double covering of $\mathbb{R}\mathrm{P}^n$ which gives the extra factor of $2$.
\end{proof}

\subsubsection{The spherical case: smooth complete intersections}
Next theorem is the spherical analogue of Theorem \ref{thm:Lotz}.
\begin{theorem}\label{thm:completeS}Let $Z$ be the zero set of homogeneous polynomials $P_1, \ldots, P_c$ in $S^n$ of degree at most $d$. Assume that $Z$ is a smooth complete intersection of dimension $m=n-c$. Let $x$ be a uniformly distributed point in a ball $B(p, s)$ of radius $0\leq s\leq\frac{\pi}{2}$ around $p\in S^n$. Then, for every $0\leq r\leq \frac{\pi}{2}$:
\be \mathbb{P}(\mathrm{dist}(x, Z)\leq r)\leq  2 \left(1+\frac{\mathrm{vol}(S^n)}{2}\right)\left(\frac{4n d \sin r}{\sin s}\right)^{n-m}\left(1+(4nd +4d+1)\frac{\sin r}{\sin s}\right)^{m}.
\ee
In particular, if $\sin r\leq\frac{\sin s}{(4nd +4d+1)m}$, 
\be \mathbb{P}(\mathrm{dist}(x, Z)\leq r)\leq  2 e\left(1+\frac{\mathrm{vol}(S^n)}{2}\right) \left(\frac{4n d \sin r}{\sin s}\right)^{n-m}.\ee
\end{theorem}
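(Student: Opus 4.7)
The plan is to carry out the spherical analogue of the proof of Lotz's Theorem~\ref{thm:Lotz}, by assembling the integral-geometric tools built up in Section~\ref{sec:preliminaries}. First I reduce the probability to a tube volume. For $x \in \mathcal{U}(Z,r) \cap B(p,s)$, any closest point $z_0 \in Z$ to $x$ lies in $\bar{B}(p,s+r)$; since $Z$ is a smooth complete intersection, the minimizing geodesic from $z_0$ to $x$ is orthogonal to $Z$, so $x \in \mathcal{T}(A, r)$ for $A := Z \cap \mathrm{int}(B(p, s+r))$. This gives
$$\mathbb{P}(\mathrm{dist}(x, Z) \leq r) \leq \frac{\mathrm{vol}(\mathcal{T}(A, r))}{\mathrm{vol}(B(p, s))}.$$

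Next I bound the tube volume. For each $i = 0, \ldots, m-1$, Proposition~\ref{propo:intermediate} with ball radius $\sigma = s + r$ gives the appropriate estimate in terms of $\beta_i(Z)$, $J_{n, c+i}(r)$, and $J_{n, m-i}(s+r)$. The index $i = m$ is not captured by Proposition~\ref{propo:intermediate} (the factor $(m-i)$ vanishes), so I treat it separately by applying Proposition~\ref{esti} directly to $Y = A$: this yields $|K_m|(A) \leq \tfrac{1}{2} \mathrm{vol}(S^{n-1}) \beta_m(Z)$, which combined with the bound $J_{n,n}(r) \leq \tfrac{1}{2} \mathrm{vol}(S^n)(\sin r)^n$ from Lemma~\ref{lemma:pJ}(2) produces the extra $\mathrm{vol}(S^n)/2$ factor that appears in the prefactor of the theorem.

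Plugging in the degree estimate $\beta_i(Z) \leq 2(4d)^{c+i}$ from Lemma~\ref{lemma:degree}, the $J$-function bounds from Lemma~\ref{lemma:pJ}(1), and the lower bound $\mathrm{vol}(B(p,s)) \geq \mathrm{vol}(S^{n-1})(\sin s)^n / n$ from Lemma~\ref{lemma:pJ}(3), I use the combinatorial identity $\binom{n-1}{m-i}(m-i)/(c+i) = \binom{n-1}{c+i}$ to reindex the binomial coefficients by the codimension, and then the inequality $\binom{n-1}{c+i} \leq n^c \binom{m}{i}$ to keep the dependence on $n$ polynomial (mirroring the reduction used in the affine case of Theorem~\ref{thm:affine}). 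Using the subadditivity $\sin(s+r) \leq \sin s + \sin r$, the sum over $i = 0, \ldots, m-1$ collapses via the binomial theorem to $(1 + (4d+1) \sin r / \sin s)^m$, so the $i < m$ part takes the shape $2(4nd \sin r / \sin s)^c (1 + (4d+1)\sin r / \sin s)^m$. The $i = m$ contribution admits a parallel bound $(\mathrm{vol}(S^n)/2)(4nd \sin r / \sin s)^c (1 + (4nd + 4d + 1) \sin r / \sin s)^m$, where the enlargement of the base constant from $(4d+1)$ to $(4nd + 4d + 1)$ absorbs the residual factor of $n$ coming from $\mathrm{vol}(B(p,s))^{-1}$. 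Summing the two contributions and rearranging yields the first displayed inequality. The ``in particular'' statement is immediate from $(1 + 1/m)^m \leq e$ under the stated hypothesis on $\sin r$.

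The main obstacle is bookkeeping: isolating the $i = m$ term (which Proposition~\ref{propo:intermediate} does not capture in its stated form), and then combining it with the sum over $i < m$ while keeping the $n$-dependence polynomial and matching the claimed constants. The combinatorial identity $\binom{n-1}{m-i}(m-i)/(c+i) = \binom{n-1}{c+i}$ is the key technical step allowing the codimension $c$ to be factored out cleanly, exactly as in the affine case.
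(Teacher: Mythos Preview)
Your argument is correct, and in one respect it is cleaner than the paper's. The key difference is in the initial reduction. The paper, following Lotz's euclidean template, intersects $Z$ with the closed ball $B(p,s+r)$ to form a manifold with boundary $Z'$, then splits $Z' = A_0 \sqcup A_1$ into interior and boundary and uses
\[
\mathcal{U}(Z,r)\cap B(p,s) \subseteq \mathcal{T}(A_0,r)\cup \mathcal{T}(A_1,r),
\]
applying Proposition~\ref{propo:intermediate} separately to $A_0$ (dimension $m$) and to $A_1$ (dimension $m-1$). You instead observe that since $Z$ itself is a closed smooth submanifold of $S^n$ with no boundary, the nearest point in $Z$ to any $x\in B(p,s)$ automatically lies in $\bar B(p,s+r)$ and the minimizing geodesic is orthogonal to $Z$ there; hence (up to a null set coming from the boundary sphere) $\mathcal{U}(Z,r)\cap B(p,s)\subseteq \mathcal{T}(A,r)$ with a \emph{single} open piece $A=Z\cap\mathrm{int}(B(p,s+r))$. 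This is a genuine simplification: it dispenses with the transversality hypothesis $Z\pitchfork B(p,s+r)$ that the paper tacitly uses, and avoids the entire $A_1$ computation. Both you and the paper must then treat the top index $i=m$ separately because the coefficient $(m-i)$ in Proposition~\ref{propo:intermediate} kills that term; you do this via Proposition~\ref{esti} and Lemma~\ref{lemma:pJ}(B), exactly as the paper does for its $A_0$ piece. In the paper the jump of the constant from $4d+1$ to $4nd+4d+1$ is driven by the boundary contribution $A_1$; in your version it is driven by absorbing the residual $n$ in the $i=m$ term, and the resulting prefactor $2+\tfrac{1}{2}\mathrm{vol}(S^n)$ is in fact slightly smaller than the paper's $2(1+\tfrac{1}{2}\mathrm{vol}(S^n))$. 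The combinatorial identity $\binom{n-1}{m-i}(m-i)/(c+i)=\binom{n-1}{c+i}$ followed by $\binom{n-1}{c+i}\le n^{c}\binom{m}{i}$ matches the paper's estimate \eqref{eq:ineq1} in spirit and leads to the same binomial collapse.
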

\begin{remark}The bounds from the statements that we put in the Introduction follow from:
\be \max_{n\geq 0}\frac{\mathrm{vol}(S^n)}{2}=\frac{\mathrm{vol}(S^6)}{2}=\frac{8}{15}\pi^3\leq 20.\ee
Since $\mathrm{vol}(S^n)\to 0$ as $n\to \infty$, for large $n$ the reader might want to keep using the bound from the previous statement (similarly for Theorem \ref{thm:gS} and Theorem \ref{thm:ill}).
\end{remark}
\begin{proof}
Let $Z\hookrightarrow S^n$ be a complete intersection defined by polynomials $P_1, \ldots, P_c$ whose degrees are bounded by $d$. Let $r, s>0$ such that $Z\pitchfork B(p, s+r)$, denote by $Z'=Z\cap B(p, s+r)$ and define:
\be A_0=Z'\setminus \partial Z'\quad \textrm{and}\quad A_1=\partial Z'.\ee
With this choice we have
\be \mathcal{U}(Z, r)\cap B(p ,s)\subseteq \mathcal{T}(A_0, r)\cup \mathcal{T}(A_1, r),
\ee
and consequently
\be\label{eq:in} \mathbb{P}(\mathrm{dist}(x, Z)\leq r)=\frac{\mathrm{vol}(\mathcal{U}(Z, r)\cap B(p, s))}{\mathrm{vol}(B(p, s))}\leq\frac{\mathrm{vol}(\mathcal{T}(A_0, r))}{\mathrm{vol}(B(p, s))}+\frac{\mathrm{vol}(\mathcal{T}(A_1, r))}{\mathrm{vol}(B(p, s))}.\ee
We apply now Proposition \ref{propo:intermediate} (which has no constraints on $\sigma>0$) for estimating both summands of \eqref{eq:in}, with the choice $\sigma=s+r$, $\varepsilon=r$ and $A=A_0, A_1.$
For $j=0, 1$, using the properties from Lemma \ref{lemma:pJ} and setting $(r+s)':=\min\{r+s, \frac{\pi}{2}\}$, we get:
\begin{align}\frac{\mathrm{vol}(\mathcal{T}(A_j, r))}{\mathrm{vol}(B(p, s))}\leq&\frac{1}{J_{n,n}(s)}\sum_{i=0}^{m-1-j}{{n-1}\choose {m-i-j}}\frac{(\sin r)^{n-m+i+j}}{n-m+i+j}(\sin(r+s)')^{m-i-j} \beta_i(m-j, d)\\\label{eq:first}
&+\frac{1}{J_{n,n}(s)}\frac{\mathrm{vol}(S^n)}{2}(\sin r)^n\beta_{m-j}(m-j, d).\end{align}
Setting now $v_n=(1+\frac{\mathrm{vol}(S^n)}{2})$, using \eqref{eq:first} we get
\be\label{eq:inter2} \frac{\mathrm{vol}(\mathcal{T}(A_j, r))}{\mathrm{vol}(B(p, s))}\leq\frac{ v_n}{J_{n,n}(s)}\sum_{i=0}^{m-j}{{n-1}\choose {m-i-j}}\frac{(\sin r)^{n-m+i+j}}{n-m+i+j}(\sin(r+s)')^{m-i-j} \beta_i(m-j, d).\ee
Let us write now:
\be\label{eq:ineq1} {{n-1}\choose {m-i-j}}\frac{1}{n-m+i+j}={{m-j}\choose{i}}\frac{{{n-1}\choose {m-i-j}}\frac{1}{n-m+i+j}}{{{m-j}\choose{i}}} \leq {{m-j}\choose{i}} n^{n-m+j-1}\ee
and, using Lemma \ref{lemma:degree}, let us estimate
\be \label{eq:ineq2}\beta_i(m-j, d)\leq 2(4 d)^{n-m+i+j}.\ee
Using \eqref{eq:ineq1} and \eqref{eq:ineq2} into \eqref{eq:inter2} we get
\begin{align} \frac{\mathrm{vol}(\mathcal{T}(A_j, r))}{\mathrm{vol}(B(p, s))}&\leq\frac{2 v_n}{J_{n,n}(s)}\sum_{i=0}^{m-j} {{m-j}\choose{i}} n^{n-m+j-1}(\sin r)^{n-m+i+j}(\sin(r+s)')^{m-i-j} (4 d)^{n-m+i+j}\\
&=\frac{2 v_n}{J_{n,n}(s)}n^{n-m+j-1}(4 d)^{n-m+j}(\sin r)^{n-m+j}\left(4d\sin r +\sin(r+s)'\right)^{m-j}\\
&\leq \frac{2v_n}{(\sin s)^n}n^{n-m+j}(4 d)^{n-m+j}(\sin r)^{n-m+j}\left(4d\sin r +\sin(r+s)'\right)^{m-j},
\end{align}
where in the last inequality we have used Lemma \ref{lemma:pJ}.

From this we see that
\begin{align}\frac{\mathrm{vol}(\mathcal{T}(A_0, r))}{\mathrm{vol}(B(p, s))}+\frac{\mathrm{vol}(\mathcal{T}(A_1, r))}{\mathrm{vol}(B(p, s))}&\leq \frac{2 v_nn^{n-m}(4 d)^{n-m}(\sin r)^{n-m}}{(\sin s)^n}\left(4d\sin r +\sin(r+s)'\right)^{m-1}\cdot\\
&\quad\cdot (4d\sin r +\sin(r+s)'+n4 d(\sin r))\\
&= \frac{2 v_nn^{n-m}(4 d)^{n-m}(\sin r)^{n-m}}{(\sin s)^n}\left(4d\sin r +\sin(r+s)'\right)^{m-1}\cdot\\
&\quad\cdot ((n+1)4d\sin r +\sin(r+s)')\\
&\leq2 v_n\left(\frac{4n d \sin r}{\sin s}\right)^{n-m}\left((n+1)4d\frac{\sin r}{\sin s} +\frac{\sin(r+s)'}{\sin s}\right)^{m}=(*).
\end{align}
We observe now that for every $0\leq r, s\leq \frac{\pi}{2}$ we have $\sin(r+s)'\leq \sin r+\sin s$ (Lemma \ref{lemma:sin}) and therefore:
\begin{align}
(*)&=2 v_n\left(\frac{n4 d \sin r}{\sin s}\right)^{n-m}\left((n+1)4d\frac{\sin r}{\sin s} +\frac{\sin s+\sin r }{\sin s}\right)^{m}\\
&\label{eq:finalc}\leq  2 v_n\left(\frac{n4 d \sin r}{\sin s}\right)^{n-m}\left(1+(4nd +4d+1)\frac{\sin r}{\sin s}\right)^{m}.
\end{align}
This proves the first part of the statement. 

If now $\sin r\leq\frac{\sin s}{(4nd +4d+1)m}$, then
\be\label{eq:expo}\left(1+(4nd +4d+1)\frac{\sin r}{\sin s}\right)^{m}\leq\left(1+\frac{1}{m}\right)^m\leq e \ee
and the second part follows from \eqref{eq:finalc}.
\end{proof}
It remains to prove the lemma that we used in the proof.
\begin{lemma}\label{lemma:sin}For every $0\leq r, s\leq \frac{\pi}{2}$ we have $\sin\left(\min\{r+s, \frac{\pi}{2}\}\right)\leq \sin r+\sin s.$
\end{lemma}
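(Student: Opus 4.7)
The plan is to split the inequality into two cases depending on whether the minimum on the left is attained by $r+s$ or by $\pi/2$.

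In the easy case, $r+s \leq \pi/2$, so $\min\{r+s,\pi/2\}=r+s$ and we must show $\sin(r+s)\leq \sin r+\sin s$. I would simply expand by the addition formula
\be
\sin(r+s)=\sin r\cos s+\cos r\sin s,
\ee
and observe that since $r,s\in[0,\pi/2]$ we have $0\leq \cos r,\cos s\leq 1$, which immediately gives the desired bound.

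In the remaining case, $r+s>\pi/2$, the left-hand side is exactly $\sin(\pi/2)=1$, so the statement reduces to $\sin r+\sin s\geq 1$. Here I would use the hypothesis $s>\pi/2-r$ together with the monotonicity of $\sin$ on $[0,\pi/2]$ to obtain $\sin s\geq \sin(\pi/2-r)=\cos r$, hence
\be
\sin r+\sin s\geq \sin r+\cos r.
\ee
It then suffices to verify the elementary inequality $\sin r+\cos r\geq 1$ for $r\in[0,\pi/2]$, which follows from $\sin r+\cos r=\sqrt{2}\sin(r+\pi/4)$ taking its minimum $1$ at the endpoints $r=0$ and $r=\pi/2$ (or, more directly, from squaring: $(\sin r+\cos r)^2=1+2\sin r\cos r\geq 1$).

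There is no substantive obstacle; the only thing to be careful about is making sure the case split is exhaustive for $r,s\in[0,\pi/2]$ and that the monotonicity argument in the second case applies within $[0,\pi/2]$, which it does since both $s$ and $\pi/2-r$ lie in that interval.
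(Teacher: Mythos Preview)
Your proof is correct and follows essentially the same approach as the paper: the same case split on whether $r+s\leq\pi/2$, the addition formula in the first case, and in the second case the reduction to $\sin+\cos\geq 1$ via the Pythagorean identity (the paper writes $1=\sin^2 s+\cos^2 s\leq \sin s+\cos s$ and then bounds $\cos s=\sin(\pi/2-s)\leq\sin r$, which is your argument with the roles of $r$ and $s$ interchanged).
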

\begin{proof}If $r+s\leq \frac{\pi}{2}$, then
\be \sin\left(\min\left\{r+s, \frac{\pi}{2}\right\}\right)=\sin (r+s)=\sin r\cos s+\sin s\cos r\leq \sin r+\sin s.\ee
If $r+s\geq \frac{\pi}{2}$, say $r\geq \frac{\pi}{2}-s$, then
\begin{align}\sin\left(\min\left\{r+s, \frac{\pi}{2}\right\}\right) &=\sin\left(\frac{\pi}{2}\right)=1\\
&=(\sin s)^2+(\cos s)^2\underset{0\leq \sin s, \cos s\leq 1}{\leq} \sin s+\cos s\\
&=\sin s+\sin \left(\frac{\pi}{2}-s\right) \leq \sin s+\sin r.
\end{align}
\end{proof}
\subsection{The general spherical case}
We are now ready to give the proof of the bound for the general case in the sphere.
\begin{theorem}\label{thm:gS}Let $\mathcal{P}  \subset \mathbb{R}[X_0,\ldots,X_n]$ be a finite set of homogeneous polynomials of degree bounded by $\delta$ and $Z\subset S^n$ be their common zero set. Assume $\dim_\mathbb{R} (Z)\leq m.$  Given $p\in S^n$ and $\sigma>0$ let $x\in B(p, \sigma)$ be a uniformly distributed point. Then, for every $\varepsilon\geq0$
\be \mathbb{P}(\mathrm{dist}(x, Z)\leq \varepsilon)\leq  2 \left(1+\frac{\mathrm{vol}(S^n)}{2}\right)\left(\frac{8n \delta \sin \varepsilon}{\sin \sigma}\right)^{n-m}\left(1+(8n\delta +8\delta+1)\frac{\sin \varepsilon}{\sin \sigma}\right)^{m}.
\ee
In particular, if $\sin \varepsilon\leq\frac{\sin \sigma}{(8n\delta +8\delta+1)m}$,
\be \mathbb{P}(\mathrm{dist}(x, Z)\leq \varepsilon)\leq  2 e\left(1+\frac{\mathrm{vol}(S^n)}{2}\right) \left(\frac{8n \delta \sin \varepsilon}{\sin \sigma}\right)^{n-m}.\ee
\end{theorem}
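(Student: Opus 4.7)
The plan is to follow exactly the same strategy used for Theorem \ref{thm:affine}: reduce to the smooth complete intersection case via Hausdorff approximation and then invoke the spherical analogue of Lotz's bound, namely Theorem \ref{thm:completeS}.

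\textbf{Step 1 (approximation).} Apply Theorem \ref{thm:approx}(B) to the family $\mathcal{P}$. This produces $t_0>0$ and a one--parameter family $\{Z_t\}_{t>0}$ of real algebraic subsets of $S^n$ such that, for every $0<t<t_0$, $Z_t$ is a nonsingular complete intersection of dimension $m$ defined by $n-m$ homogeneous polynomials of degree at most $2\delta$, and
\be \lim_{t\to 0}Z_t\supseteq Z. \ee

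\textbf{Step 2 (metric comparison).} Apply Theorem \ref{thm:metric} with $X=S^n$, $C=Z$, $C_t=Z_t$ and $B=S^n$ (compactness of $S^n$ means the condition $B(p,\sigma+\varepsilon)\subseteq B$ is trivial). For every $\tau>0$ there is $t_\tau>0$ such that, for all $0<t<\min\{t_0,t_\tau\}$,
\be \mathcal{U}_{S^n}(Z,\varepsilon)\cap B(p,\sigma)\subseteq \mathcal{U}_{S^n}(Z_t,\varepsilon+\tau)\cap B(p,\sigma). \ee
Dividing by $\mathrm{vol}(B(p,\sigma))$ yields
\be \mathbb{P}(\mathrm{dist}(x,Z)\leq \varepsilon)\leq \mathbb{P}(\mathrm{dist}(x,Z_t)\leq \varepsilon+\tau). \ee

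\textbf{Step 3 (smooth case bound).} Now $Z_t$ is a smooth complete intersection defined by polynomials of degree $\leq 2\delta$, so Theorem \ref{thm:completeS} applies with $d=2\delta$, $s=\sigma$ and $r=\varepsilon+\tau$. Substituting $d=2\delta$ into the bound of Theorem \ref{thm:completeS} turns the factors $4nd$, $4d$ into $8n\delta$, $8\delta$, giving
\be \mathbb{P}(\mathrm{dist}(x,Z_t)\leq \varepsilon+\tau)\leq 2\left(1+\tfrac{\mathrm{vol}(S^n)}{2}\right)\!\left(\tfrac{8n\delta\sin(\varepsilon+\tau)}{\sin\sigma}\right)^{n-m}\!\left(1+(8n\delta+8\delta+1)\tfrac{\sin(\varepsilon+\tau)}{\sin\sigma}\right)^{m}. \ee

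\textbf{Step 4 (limit and second inequality).} Combine the previous two displays and let $\tau\to 0$. By continuity of $\sin$, the right hand side tends to the bound claimed in the statement, proving the first inequality. The sharper bound (the one with the factor $e$) follows by the elementary inequality $(1+\tfrac{1}{m})^m\leq e$: if $\sin\varepsilon\leq\frac{\sin\sigma}{(8n\delta+8\delta+1)m}$, then
\be \left(1+(8n\delta+8\delta+1)\tfrac{\sin \varepsilon}{\sin \sigma}\right)^{m}\leq \left(1+\tfrac{1}{m}\right)^m\leq e. \ee

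The plan is almost entirely assembly. The one delicate point is the hypothesis of Theorem \ref{thm:completeS} that $r,s\leq \pi/2$. If $\sigma>\pi/2$ or $\varepsilon+\tau>\pi/2$, one either truncates the argument of $\sin$ in the bounds (as is already done in Definition \ref{defi:J} via the $\min\{\cdot,\pi/2\}$ convention and Lemma \ref{lemma:pJ}), or observes that the stated bound exceeds $1$ and is therefore trivially true. This case analysis is routine but is the only place where one must be careful: everything else is a direct composition of Theorem \ref{thm:approx}(B), Theorem \ref{thm:metric} and Theorem \ref{thm:completeS}.
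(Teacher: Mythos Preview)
Your proposal is correct and follows essentially the same route as the paper: approximate via Theorem~\ref{thm:approx}(B), compare neighborhoods via Theorem~\ref{thm:metric} with $X=B=S^n$, apply Theorem~\ref{thm:completeS} with $d=2\delta$, $r=\varepsilon+\tau$, $s=\sigma$, and let $\tau\to 0$. The paper handles the $\pi/2$ constraint slightly more explicitly by first assuming $0\le\varepsilon<\pi/2$ and choosing $0<\tau<\pi/2-\varepsilon$ (treating $\varepsilon=\pi/2$ by a limit), but your remark that this case analysis is routine is accurate.
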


\begin{proof}The proof is similar to the proof of Theorem \ref{thm:affine}. Let $0\leq\varepsilon<\frac{\pi}{2}$ (the case $\varepsilon=\frac{\pi}{2}$ follows by a limit argument).

By part (B) of Theorem \ref{thm:approx} there exists a one parameter family of real algebraic sets $\{Z_t\}_{t>0}$ in the sphere $S^n$ and $t_0>0$ such that for all $0<t<t_0$ the set $Z_t$ is a complete intersection defined by homogeneous polynomials $P_1, \ldots, P_{n-m}$ of degree bounded by $2\delta$ and such that:
\be \lim_{t\to0}Z_t\supseteq Z.\ee

Applying Theorem \ref{thm:metric} with the choices $X=B=S^n$ and $\{C_t\}_{t>0}=\{Z_t\}_{t>0}$, for every $0<\tau<\frac{\pi}{2}-\varepsilon$ there exists $t_\tau>0$ such that for all $t<t_\tau$:
\be \mathcal{U}(Z, \varepsilon)\cap B(p, \sigma)\subseteq \mathcal{U}(Z_t, \varepsilon+\tau)\cap B(p, \sigma).\ee
Therefore for every $t<\min\{t_0, t_\tau\}$ we can apply Theorem \ref{thm:completeS} to the set $Z_t$, with the choices $d=2\delta$, $r=\varepsilon+\tau<\frac{\pi}{2}$ and $s=\sigma$:
\be \mathbb{P}(\mathrm{dist}(x, Z)\leq\varepsilon)\leq 2 \left(1+\frac{\mathrm{vol}(S^n)}{2}\right)\left(\frac{8n \delta \sin (\varepsilon+\tau)}{\sin \sigma}\right)^{n-m}\left(1+(8n\delta +8\delta+1)\frac{\sin (\varepsilon+\tau)}{\sin \sigma}\right)^{m}.\ee
Letting $\tau\to 0$ on both sides we get the first part of the result; the second part follows now arguing as in \eqref{eq:expo}.
\end{proof}

\subsection{Proof of Theorem \ref{thm:ill}}\label{sec:pill}
\begin{proof}Using the fact that $\Sigma\subseteq V$ we see that 
\be \{\mathscr{C}(x)\geq t\}\subseteq \left\{\mathrm{dist}(x, Z)\leq \frac{1}{\arcsin t}\right\}.\ee
For $a\in S^n$ and $0<u\leq 1$, let us write $B_{\mathrm{sin}}(a, u)=B(a, \arcsin u)\cup B(-a, \arcsin u)$, so that:
\begin{align} \underset{x\in B_{\sin}(a, u)}{\mathbb{P}}\left\{\mathscr{C}(x)\geq t\right\}&=\frac{\mathrm{vol}\left(\left\{\mathscr{C}(x)\geq t\right\}\cap B_{\mathrm{sin}}(a, u) \right)}{\mathrm{vol}\left(B_{\mathrm{sin}}(a, u)\right)}\\
&\label{eq:summand}=\frac{\mathrm{vol}\left(\left\{\mathscr{C}(x)\geq t\right\}\cap B(a, \arcsin u) \right)+ \mathrm{vol}\left(\left\{\mathscr{C}(x)\geq t\right\}\cap B(-a, \arcsin u) \right)}{2\mathrm{vol}\left(B(a, \arcsin u)\right)}\\
&\leq \frac{\mathrm{vol}\left(\left\{\mathrm{dist}(x, Z)\leq \frac{1}{\arcsin t}\right\}\cap B(a, \arcsin u)\right)}{\mathrm{vol}\left(B(a, \arcsin u)\right)}.
\end{align}
In the last step we have used the fact that $\Sigma=-\Sigma$, which in particular implies that the two summands in the numerator of \eqref{eq:summand} are equal.
The result is now just a reformulation of Theorem \ref{thm:gS}.
\end{proof}

\bibliographystyle{amsplain}
\bibliography{master}

\def\cprime{$'$} \def\cprime{$'$}
\providecommand{\bysame}{\leavevmode\hbox to3em{\hrulefill}\thinspace}
\providecommand{\MR}{\relax\ifhmode\unskip\space\fi MR }
% \MRhref is called by the amsart/book/proc definition of \MR.
\providecommand{\MRhref}[2]{%
  \href{http://www.ams.org/mathscinet-getitem?mr=#1}{#2}
}
\providecommand{\href}[2]{#2}
\begin{thebibliography}{10}

\bibitem{Barone-Basu}
Sal Barone and Saugata Basu, \emph{Refined bounds on the number of connected
  components of sign conditions on a variety}, Discrete Comput. Geom.
  \textbf{47} (2012), no.~3, 577--597. \MR{2891249}

\bibitem{BPRbook2}
S.~Basu, R.~Pollack, and M.-F. Roy, \emph{Algorithms in real algebraic
  geometry}, Algorithms and Computation in Mathematics, vol.~10,
  Springer-Verlag, Berlin, 2006 (second edition). \MR{1998147 (2004g:14064)}

\bibitem{BCSS}
Lenore Blum, Felipe Cucker, Michael Shub, and Steve Smale, \emph{Complexity and
  real computation}, Springer-Verlag, New York, 1998, With a foreword by
  Richard M. Karp. \MR{1479636}

\bibitem{BottTu}
Raoul Bott and Loring~W. Tu, \emph{Differential forms in algebraic topology},
  Graduate Texts in Mathematics, vol.~82, Springer-Verlag, New York-Berlin,
  1982. \MR{658304}

\bibitem{BuCu}
Peter B\"{u}rgisser and Felipe Cucker, \emph{Condition}, Grundlehren der
  Mathematischen Wissenschaften [Fundamental Principles of Mathematical
  Sciences], vol. 349, Springer, Heidelberg, 2013, The geometry of numerical
  algorithms. \MR{3098452}

\bibitem{BCL2}
Peter B\"{u}rgisser, Felipe Cucker, and Martin Lotz, \emph{Smoothed analysis of
  complex conic condition numbers}, J. Math. Pures Appl. (9) \textbf{86}
  (2006), no.~4, 293--309. \MR{2257845}

\bibitem{BCL}
\bysame, \emph{The probability that a slightly perturbed numerical analysis
  problem is difficult}, Math. Comp. \textbf{77} (2008), no.~263, 1559--1583.
  \MR{2398780}

\bibitem{PSC}
Peter B\"{u}rgisser and Antonio Lerario, \emph{Probabilistic {S}chubert
  calculus}, J. Reine Angew. Math. \textbf{760} (2020), 1--58. \MR{4069883}

\bibitem{Demmel}
James~W. Demmel, \emph{The probability that a numerical analysis problem is
  difficult}, Math. Comp. \textbf{50} (1988), no.~182, 449--480. \MR{929546}

\bibitem{Edelman}
Alan~Stuart Edelman, \emph{Eigenvalues and condition numbers of random
  matrices}, ProQuest LLC, Ann Arbor, MI, 1989, Thesis (Ph.D.)--Massachusetts
  Institute of Technology. \MR{2941174}

\bibitem{Gray1}
Alfred Gray, \emph{Volumes of tubes about complex submanifolds of complex
  projective space}, Trans. Amer. Math. Soc. \textbf{291} (1985), no.~2,
  437--449. \MR{800247}

\bibitem{Gray2}
\bysame, \emph{Tubes}, second ed., Progress in Mathematics, vol. 221,
  Birkh\"{a}user Verlag, Basel, 2004, With a preface by Vicente Miquel.
  \MR{2024928}

\bibitem{Howard}
R.~Howard, \emph{The kinematic formula in {R}iemannian homogeneous spaces},
  Mem. Amer. Math. Soc. \textbf{106} (1993), no.~509, vi+69. \MR{1169230}

\bibitem{Kouchnirenko}
A.~G. Ku{\v{s}}nirenko, \emph{Newton polyhedra and {B}ezout's theorem},
  Funkcional. Anal. i Prilo\v zen. \textbf{10} (1976), no.~3, 82--83.
  \MR{0422272 (54 \#10263)}

\bibitem{Loeser}
Fran\c{c}ois Loeser, \emph{Volume de tubes autour de singularit\'{e}s}, Duke
  Math. J. \textbf{53} (1986), no.~2, 443--455. \MR{850545}

\bibitem{Lotz}
Martin Lotz, \emph{On the volume of tubular neighborhoods of real algebraic
  varieties}, Proc. Amer. Math. Soc. \textbf{143} (2015), no.~5, 1875--1889.
  \MR{3314098}

\bibitem{Srivastasa}
S.~M. Srivastava, \emph{A course on {B}orel sets}, Graduate Texts in
  Mathematics, vol. 180, Springer-Verlag, New York, 1998. \MR{1619545}

\bibitem{Vit2}
A.~G. Vitushkin, \emph{The relation of variations of a set to the metric
  properties of its complement}, Dokl. Akad. Nauk SSSR (N.S.) \textbf{114}
  (1957), 686--689. \MR{0090622}

\bibitem{Vit1}
A.~G. Vitu\v{s}kin, \emph{O mnogomernyh variaciyah}, Gosudarstv. Izdat.
  Tehn.-Teor. Lit., Moscow, 1955. \MR{0075267}

\bibitem{Weyl}
Hermann Weyl, \emph{On the {V}olume of {T}ubes}, Amer. J. Math. \textbf{61}
  (1939), no.~2, 461--472. \MR{1507388}

\bibitem{Wongkew}
Richard Wongkew, \emph{Volumes of tubular neighbourhoods of real algebraic
  varieties}, Pacific J. Math. \textbf{159} (1993), no.~1, 177--184.
  \MR{1211391}

\bibitem{ComteYomdin}
Yosef Yomdin and Georges Comte, \emph{Tame geometry with application in smooth
  analysis}, Lecture Notes in Mathematics, vol. 1834, Springer-Verlag, Berlin,
  2004. \MR{2041428}

\end{thebibliography}

\end{document}